\newtheorem{defi}{Definition}[section]
\newtheorem{prop}{Proposition}[section]
\newtheorem{lemma}{Lemma}[section]
\newtheorem{theo}{Theorem}[section]
\newtheorem*{theorem*}{Theorem}
\newtheorem*{lemma*}{Lemma}
\theoremstyle{definition}
\newtheorem{remk}{Remark}[section]
\newtheorem{exam}{Example}[section]
\newcommand{\Hom}{\operatorname{Hom}}
\newcommand{\Map}{\operatorname{Map}}
\newcommand{\Ad}{\operatorname{Ad}}
\newcommand{\Aut}{\operatorname{Aut}}
\newcommand{\Isom}{\operatorname{Isom}}
\newcommand{\Out}{\operatorname{Out}}
\newcommand{\mon}{\operatorname{mon}}
\newcommand{\Hol}{\operatorname{Hol}}
\newcommand{\Lie}{\operatorname{Lie}}
\newcommand{\curv}{\operatorname{curv}}
\newcommand{\pr}{\operatorname{pr}}
\newcommand{\ave}{\operatorname{ave}}
\newcommand{\bG}{\mathbb{G}}
\newcommand{\bH}{\mathbb{H}}
\newcommand{\bD}{\mathbb{D}}
\newcommand{\bC}{\mathbb{C}}
\newcommand{\bA}{\mathbb{A}}
\newcommand{\bB}{\mathbb{B}}
\newcommand{\bR}{\mathbb{R}}
\newcommand{\bZ}{\mathbb{Z}}
\DeclareMathOperator*{\bigfusion}{\text{\rlap{$\bigoplus$}$\bigotimes$}}
\newcommand{\relmiddle}[1]{\mathrel{}\middle#1\mathrel{}}
\newcommand{\set}[2]{\left\{\,#1 \relmiddle| #2 \,\right\}}
\title{Finite group actions on quasi-Hamiltonian spaces}
\author{Keito Takegoshi}
\address{Department of Mathematics, Faculty of Science Division I, Tokyo University of Science, 1-3 Kagrazaka, Shinjuku, Tokyo 162-8601, Japan}
\email{1125703@ed.tus.ac.jp}
\subjclass[2020]{Primary 53D30; Secondary 53D20, 58D19}
\keywords{quasi-Hamiltonian geometry, moduli space of flat connections}
\begin{document}

\begin{abstract}
We organize fundamental properties of quasi-Hamiltonian spaces on which a finite group acts,
and we apply them to the theory of moduli spaces of flat connections on an oriented compact surface with boundary. 
\end{abstract}

\maketitle


\section{Introduction}

Moduli spaces of flat connections on an oriented compact surface $\Sigma$ with boundary have attracted much interest from both mathematicians and physicists.
Atiyah and Bott discovered that such a space carries a Poisson structure obtained via the infinite-dimensional Hamiltonian reduction for the action of the gauge transformation group on the infinite-dimensional affine space consisting of connections on $\Sigma$ (\cite{Atiyah-Bott}).
Since then, several methods for constructing the Atiyah--Bott Poisson structure by finite-dimensional methods (without using the infinite-dimensional Hamilton reduction) have been discovered (\cite{AMM},\cite{F-R},\cite{Goldman}).

In \cite{AMM}, Alekseev, Malkin, and Meinrenken developed the theory of quasi-Hamiltonian spaces which are multiplicative analogues of usual Hamiltonian spaces.
In their theory, a moment map takes values in a compact Lie group $G$ itself, not in the dual space $\mathfrak{g}^*$ of the Lie algebra of $G$.

Let $\beta \subset \partial\Sigma$ be a set of base points with exactly one element in each connected component of $\partial \Sigma$. 
Then there exists a well-known correspondence between flat connections on $\Sigma$ framed at $\beta$ and $G$-valued representations of the fundamental groupoid $\Pi_1(\Sigma,\beta)$ with base points $\beta$
\[\Hom(\Pi_1(\Sigma,\beta),G).\]
They showed that the representation space $\Hom(\Pi_1(\Sigma,\beta),G)$ becomes a quasi-Hamiltonian $G^{\beta}$-space where the group-valued moment map is the collection of monodromies around the boundary cycles of $\Sigma$,
and that the Poisson structure on the moduli space is obtained via the quasi-Hamiltonian reduction.

In \cite{B-Y2015} and \cite{Meinrenken2017}, the notion of a quasi-Hamiltonian space was generalized to a twisted quasi-Hamiltonian space, so that a moment map could take values in a bitorsor.
It was shown that the space of $G$-valued twisted representations of the fundamental groupoid (see subsection \ref{section_twisted_reps})
\[\Hom_{\kappa}(\Pi_1(\Sigma,\beta),G)\]
becomes a twisted quasi-Hamiltonian $G^{\beta}$-space. 

In this paper, we show that a twisted representation space of a fundamental groupoid is obtained from a usual representation space of a fundamental groupoid together with an action of a finite group $\Gamma$.

Now, we briefly state the main results of this paper.
In the first part of this paper, we define $\Gamma$-compatible quasi-Hamiltonian $G$-spaces and study their basic properties, such as the fusion product of such spaces.
Fix a group homomorphism $\Gamma \to \Aut(G)$.
A $\Gamma$-compatible quasi-Hamiltonian $G$-space is a quasi-Hamiltonian $G$-space $(M, \omega, \mu)$ where $\Gamma$ acts on $M$, the quasi-Hamiltonian 2-form $\omega$ is $\Gamma$-invariant, and the group-valued moment map $\mu$ is $\Gamma$-equivariant.
The following theorem is the main result of the first part.

\begin{theo}(Theorem \ref{fixed_points_q-Ham})
  Let $(M, \omega, \mu)$ be a $\Gamma$-compatible quasi-Hamiltonian $G$-space.
  Let $M^{\Gamma}$ be the set of $\Gamma$-fixed points and $G^{\Gamma}$ be the Lie subgroup of $\Gamma$-fixed points.
  Also we denote the identity component of $G^{\Gamma}$ by $H$.
  Then, each connected component $N \subset M^{\Gamma}$ together with the 2-form $\omega|_{N}$ and the map $\mu|_{N}$, which takes values in a $H$-bitorsor $\bH \subset \bG$ such that $\mu(N) \subset \bH$, is a twisted quasi-Hamiltonian $H$-space.
\end{theo}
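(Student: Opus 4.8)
The plan is to restrict every piece of structure on $M$ to a connected component $N$ of the fixed-point set $M^{\Gamma}$ and to verify the three axioms of a twisted quasi-Hamiltonian $H$-space in turn — the $d\omega$-identity, the moment-map contraction identity, and the minimal-degeneracy condition — the only delicate point being the last one, which I would handle by averaging over the finite group $\Gamma$. First I would set up the preliminaries. Since $\Gamma$ is finite, averaging a Riemannian metric makes it $\Gamma$-invariant, so Bochner's linearization theorem shows that $M^{\Gamma}$ is an embedded submanifold with $T_{x}N=(T_{x}M)^{\Gamma}$ for every $x\in N$. Compatibility of the $G$- and $\Gamma$-actions on $M$ (so that $\gamma\cdot(g\cdot m)=(\gamma\cdot g)\cdot(\gamma\cdot m)$) implies $G^{\Gamma}$ preserves $M^{\Gamma}$, and since $H$ is connected it preserves $N$; moreover the fundamental vector field on $N$ of $\xi\in\mathfrak{h}=\mathfrak{g}^{\Gamma}$ is the restriction of $\xi_{M}$. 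For $x\in N$, equivariance of $\mu$ gives $\gamma\cdot\mu(x)=\mu(\gamma\cdot x)=\mu(x)$, so $\mu(N)\subset G^{\Gamma}$, and as $N$ is connected $\mu(N)$ sits in a single connected component $\bH$ of $G^{\Gamma}$. Because $H$, being the identity component, is normal in $G^{\Gamma}$, left and right multiplication make $\bH$ an $H$-bitorsor $\bH\subset\bG$, conjugation by elements of $\bH$ preserves $\mathfrak{h}$, and $G$-equivariance of $\mu$ makes $\mu|_{N}\colon N\to\bH$ equivariant for the conjugation action of $H$ on $\bH$. Finally, the invariant bilinear form on $\mathfrak{g}$ is $\Gamma$-invariant (as it must be for the whole structure to be $\Gamma$-compatible), hence restricts to a nondegenerate invariant form on $\mathfrak{h}=\mathfrak{g}^{\Gamma}$, and this is the form I equip $H$ with.

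I would then dispatch the first two axioms by restricting the corresponding identities on $M$ to $N$. For the $d\omega$-identity this gives $d(\omega|_{N})=(d\omega)|_{N}=-(\mu|_{N})^{*}(\chi_{G}|_{\bH})$, so it remains to identify the restriction of the Cartan $3$-form $\chi_{G}$ to the submanifold $\bH\subset G$ with the Cartan $3$-form $\chi_{\bH}$ of the $H$-bitorsor $\bH$; this is a short computation once one notes that $T_{x}\bH=\mathfrak{h}\cdot x$ inside $T_{x}G$, that the Maurer--Cartan forms of $G$ restrict on $\bH$ to those of the bitorsor, and that the bilinear form of $\mathfrak{g}$ restricts to that of $\mathfrak{h}$. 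For the contraction identity, $\xi_{N}=\xi_{M}|_{N}$ for $\xi\in\mathfrak{h}$, so $\iota_{\xi_{N}}(\omega|_{N})=(\iota_{\xi_{M}}\omega)|_{N}$, and restricting the moment-map relation on $M$ — whose right-hand side involves only Maurer--Cartan forms paired with $\xi\in\mathfrak{h}$ — yields the bitorsor version on $\bH$ via the same restriction identities.

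The heart of the argument is the minimal-degeneracy axiom. Fix $x\in N$ and decompose $T_{x}M=(T_{x}M)^{\Gamma}\oplus W$ into the trivial $\Gamma$-isotypic summand and the sum of the nontrivial ones. Since $\omega_{x}$ is $\Gamma$-invariant and the $\Gamma$-average of any element of $W$ vanishes, $(T_{x}M)^{\Gamma}$ and $W$ are $\omega_{x}$-orthogonal; hence for $v\in T_{x}N=(T_{x}M)^{\Gamma}$ one has $v\in\ker(\omega|_{N})_{x}$ if and only if $\omega_{x}(v,\,\cdot\,)=0$ on all of $T_{x}M$, that is,
\[ \ker(\omega|_{N})_{x}=\ker\omega_{x}\cap T_{x}N. \]
By the minimal-degeneracy axiom for $(M,\omega,\mu)$, any such $v$ equals $\xi_{M}(x)$ for some $\xi\in\mathfrak{g}$ with $\Ad_{\mu(x)}\xi=-\xi$ and with $\xi_{M}(x)\in T_{x}N$. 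Setting $\bar\xi=\tfrac1{|\Gamma|}\sum_{\gamma\in\Gamma}\gamma\cdot\xi\in\mathfrak{g}^{\Gamma}=\mathfrak{h}$, the identity $(\gamma\cdot\xi)_{M}(x)=\gamma\cdot\xi_{M}(x)$ (valid because $x$ is $\Gamma$-fixed) together with $\Gamma$-invariance of $\xi_{M}(x)$ gives $\bar\xi_{M}(x)=\xi_{M}(x)=v$, while $\gamma\cdot\mu(x)=\mu(x)$ and $\gamma\in\Aut(G)$ give $\Ad_{\mu(x)}(\gamma\cdot\xi)=\gamma\cdot(\Ad_{\mu(x)}\xi)=-\gamma\cdot\xi$, whence $\Ad_{\mu(x)}\bar\xi=-\bar\xi$. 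Thus $v=\bar\xi_{N}(x)$ with $\bar\xi\in\mathfrak{h}$ satisfying $\Ad_{\mu(x)}\bar\xi=-\bar\xi$ — and since $\Ad_{\mu(x)}$ preserves $\mathfrak{h}$, this is exactly the twisted minimal-degeneracy condition at $\mu(x)\in\bH$. The reverse inclusion is immediate, since any such element of $\mathfrak{h}$ lies in $\ker(\Ad_{\mu(x)}+1)\subset\mathfrak{g}$ and is $\Gamma$-fixed, hence its fundamental vector field lies in $\ker\omega_{x}\cap T_{x}N$.

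The main obstacle is this last axiom: one must know that restricting $\omega$ to the fixed-point set creates no new degeneracies — this is the content of the $\omega_{x}$-orthogonality of the $\Gamma$-isotypic pieces — and then match the resulting kernel with the \emph{twisted} adjoint condition over $\mathfrak{h}$, rather than the untwisted one over $\mathfrak{g}$, which is where the averaging over $\Gamma$ is essential. By comparison, the first two axioms and the identification of $\bH$ as an $H$-bitorsor carrying the correct Cartan $3$-form are routine bookkeeping with Maurer--Cartan forms.
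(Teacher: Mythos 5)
Your proof is correct, and for (QH1), (QH2), the bitorsor structure on $\bH$, and the submanifold structure of $M^{\Gamma}$ it coincides with the paper's argument (restriction, plus the observation that the Maurer--Cartan forms and the Cartan $3$-form of $\bG$ restrict on $\bH$ to those of the $H$-bitorsor). Where you genuinely diverge is the minimal-degeneracy axiom. The paper works directly with the axiom in the form $\ker\omega_p\cap\ker d\mu_p=\{0\}$: given $X\in\ker(\omega|_N)_x\cap\ker d(\mu|_N)_x$, it notes that $X\in\ker d\mu_x$ trivially and that $\omega_x(X,Y)=\omega_x\bigl(X,\tfrac{1}{|\Gamma|}\sum_{\varphi}\varphi_*Y\bigr)=0$ for every $Y\in T_xM$, because the average lands in $T_xN$ --- which is exactly your observation that $T_xN$ and the nontrivial isotypic summand are $\omega_x$-orthogonal --- so $X\in\ker\omega_x\cap\ker d\mu_x=\{0\}$. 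You instead pass to the equivalent characterization $\ker\omega_x=\set{\xi^{\#}_x}{\Ad_{\mu(x)}\xi=-\xi}$ and average the Lie-algebra element $\xi$ over $\Gamma$ to produce $\bar\xi\in\mathfrak{g}^{\Gamma}$ with the same fundamental vector at $x$ and the same twisted adjoint condition. Both routes are sound; yours requires importing the standard but nontrivial (and, in the twisted bitorsor setting, not proved in this paper) equivalence between the two forms of (QH3), whereas the paper's is self-contained from the axiom as literally stated. In exchange your argument yields strictly more information: an explicit description of $\ker(\omega|_N)_x$ as the fundamental vectors of $\ker(\Ad_{\mu(x)}+1)\cap\mathfrak{g}^{\Gamma}$, which the paper never records. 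If you keep your version, either take the kernel-characterization form of (QH3) as the definition from the outset, or add one sentence deducing $\ker(\omega|_N)_x\cap\ker d(\mu|_N)_x=\{0\}$ from that characterization.
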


This theorem allows us to obtain a new quasi-Hamiltonian space from a given $\Gamma$-compatible quasi-Hamiltonian space.
In the second part of this paper, we apply the above results to framed moduli spaces of flat connections.
In addition to the action of the finite group $\Gamma$ on the compact Lie group $G$, we consider a free $\Gamma$-action on $\Sigma$ and choose base points $\beta \subset \partial \Sigma$ that are invariant under the $\Gamma$-action on $\Sigma$ and meet each connected component of $\partial \Sigma$. 
The following theorem summarizes the main results of this part.

\begin{theorem*}
  The representation space $\Hom(\Pi_1(\Sigma,\beta),G)$ has a $\Gamma$-compatible quasi-Hamiltonian $G^{\beta}$-space structure and 
  there exists an isomorphism
  \[\Hom(\Pi_1(\Sigma,\beta),G)^{\Gamma} \simeq \Hom_{\kappa}(\Pi_1(\Sigma/\Gamma,\beta/\Gamma),G),\]
  where $\kappa$ is a groupoid homomorphism $\kappa \colon \Pi_1(\Sigma/\Gamma,\beta/\Gamma) \to \Gamma$ which is induced from the $\Gamma$-action on $\Sigma$.
\end{theorem*}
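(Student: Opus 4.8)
\emph{Overview.} The plan is to prove the two claims separately: (i) that $M:=\Hom(\Pi_1(\Sigma,\beta),G)$ is a $\Gamma$-compatible quasi-Hamiltonian $G^{\beta}$-space, and (ii) that the fixed-point set $M^{\Gamma}$, with the twisted quasi-Hamiltonian structure produced by Theorem \ref{fixed_points_q-Ham}, is isomorphic to $\Hom_{\kappa}(\Pi_1(\bar\Sigma,\bar\beta),G)$, where $\bar\Sigma:=\Sigma/\Gamma$ and $\bar\beta:=\beta/\Gamma$.

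\emph{Step 1: the $\Gamma$-compatible structure.} By \cite{AMM}, $M$ is a quasi-Hamiltonian $G^{\beta}$-space; its $2$-form $\omega$ is assembled by the internal fusion procedure from Maurer--Cartan forms on the copies of $G$ indexed by the edges of a presentation of $\Pi_1(\Sigma,\beta)$ together with the Cartan $3$-form $\eta$, and its moment map $\mu$ is the tuple of boundary monodromies. Define the $\Gamma$-action on $M$ by $(\gamma\cdot\phi)(p)=\gamma\bigl(\phi(\gamma^{-1}p)\bigr)$, combining the deck action of $\Gamma$ on the groupoid $\Pi_1(\Sigma,\beta)$ (which permutes $\beta$ freely) with the homomorphism $\Gamma\to\Aut(G)$; this is intertwined with the corresponding $\Gamma$-action on $G^{\beta}=\Map(\beta,G)$. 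Equivariance of $\mu$ is immediate: $\mu(\gamma\cdot\phi)(x)=\gamma\bigl(\mu(\phi)(\gamma^{-1}x)\bigr)$, since the $\Gamma$-action carries the boundary loop at $x$ to that at $\gamma x$ and commutes with applying $\phi$. For $\Gamma$-invariance of $\omega$ I would invoke the naturality of the AMM construction in its two inputs: $\Gamma$ acts on $\Sigma$ by orientation-preserving diffeomorphisms (so that $\bar\Sigma$ is oriented) and on $G$ by automorphisms preserving the invariant inner product on $\mathfrak g$ that defines $\eta$ (which may be taken $\Gamma$-invariant after averaging over the finite group $\Gamma$); since $\omega$ is built by a recipe functorial in exactly this data, $\gamma^{*}\omega=\omega$.

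\emph{Step 2: the fixed points as a twisted representation space.} A point of $M^{\Gamma}$ is exactly a $\Gamma$-equivariant groupoid homomorphism $\phi$, i.e.\ $\phi(\gamma p)=\gamma\cdot\phi(p)$. Since $\Gamma$ acts freely on $\Sigma$, the quotient map $\pi\colon\Sigma\to\bar\Sigma$ is a regular covering with deck group $\Gamma$ and $\beta=\pi^{-1}(\bar\beta)$; fix a section $s\colon\bar\beta\to\beta$ of $\pi|_{\beta}$. Path lifting relative to $s$ yields a groupoid homomorphism $\kappa\colon\Pi_1(\bar\Sigma,\bar\beta)\to\Gamma$ --- the lift of $\bar p\colon\bar x\to\bar y$ from $s(\bar x)$ ends at $\kappa(\bar p)\cdot s(\bar y)$ --- well defined up to equivalence independently of $s$. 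I would then define $\Phi\colon M^{\Gamma}\to\Hom_{\kappa}(\Pi_1(\bar\Sigma,\bar\beta),G)$ by $\Phi(\phi)(\bar p):=\phi(\tilde p)$, where $\tilde p$ is the $s(\bar x)$-based lift of $\bar p$; the identity $\phi(\kappa(\bar p)\cdot\tilde q)=\kappa(\bar p)\cdot\phi(\tilde q)$ coming from $\Gamma$-equivariance translates into the twisted homomorphism property of subsection \ref{section_twisted_reps}. Conversely, from $\psi\in\Hom_{\kappa}$ one recovers $\phi$ by $\phi(\tilde p):=\gamma\cdot\psi(\pi_{*}\tilde p)$, where $\gamma\in\Gamma$ is determined by requiring the source of $\tilde p$ to equal $\gamma\cdot s(\pi(\text{source}))$; $\Gamma$-equivariance of $\phi$ and the fact that it is a groupoid homomorphism follow from the twisted cocycle condition. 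These maps are mutually inverse and are isomorphisms of manifolds (they merely reindex the $G$-coordinates and apply fixed automorphisms). Finally $(G^{\beta})^{\Gamma}$, the group of $\Gamma$-equivariant maps $\beta\to G$, is identified with $G^{\bar\beta}$ by restriction along $s$, and it is connected; so by Theorem \ref{fixed_points_q-Ham}, $M^{\Gamma}$ is a twisted quasi-Hamiltonian $G^{\bar\beta}$-space.

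\emph{Step 3: matching the structures; the main obstacle.} It remains to check that, under $\Phi$, the twisted quasi-Hamiltonian structure that Theorem \ref{fixed_points_q-Ham} places on $M^{\Gamma}$ (the restricted $2$-form and the boundary-monodromy moment map valued in the bitorsor $\bH\subset\bG$) coincides with the one placed on $\Hom_{\kappa}(\Pi_1(\bar\Sigma,\bar\beta),G)$ in \cite{B-Y2015},\cite{Meinrenken2017}. I expect this to be the main obstacle, as it is the only point that is not pure bookkeeping. The key observation is that both $2$-forms arise, via the same fusion recipe, from Maurer--Cartan data on copies of $G$ indexed by the edges of a cell decomposition --- a $\Gamma$-invariant one upstairs, its quotient downstairs, with each edge below pulling back to a $\Gamma$-orbit of edges above and $s$ pinning down representatives --- and both moment maps are boundary monodromies; tracing $\Phi$ through this recipe shows the forms, the moment maps, and the $G^{\bar\beta}$-actions all correspond, with $\bH$ matching the bitorsor in which the twisted moment map of \cite{B-Y2015} takes values. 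One should also record independence of the choice of $s$. Together these complete the argument.
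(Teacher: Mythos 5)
Your overall architecture is close to the paper's: the paper also splits the argument into (a) putting a $\Gamma$-compatible quasi-Hamiltonian structure on $M(X)\simeq\Hom(\Pi_1(X,\beta),G)$ and (b) identifying $M(X)^{\Gamma}$ with the twisted representation space via a choice of representatives $I\subset\beta$ of the $\Gamma$-orbits and the lifting homomorphism $\mon_{Y,I}\colon\Pi_1(Y,\beta_Y)\to\Gamma$ (Appendix \ref{group bundle and monodromy}); your Step 2 reproduces that argument essentially verbatim, including the bijection $\rho\mapsto\underline{\rho}_I$ and its inverse. Your Step 3 worry is largely moot for the theorem as stated: the paper does not match the fixed-point structure against an independently defined structure from \cite{B-Y2015}, it simply transports the structure from $M(X)^{\Gamma}$ (via Theorem \ref{fixed_points_q-Ham} and connectedness of $\Hom_{\mon_{Y,I}}(\Pi_1(Y,\beta_Y),G)$) to declare the twisted quasi-Hamiltonian $G^{\beta_Y}$-structure on the right-hand side.

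The genuine gap is in your Step 1. You cite \cite{AMM} for the quasi-Hamiltonian structure on $\Hom(\Pi_1(\Sigma,\beta),G)$, but \cite[Theorems 9.1, 9.3]{AMM} treat one base point per boundary component, whereas here $\beta$ must be $\Gamma$-invariant and the $\Gamma$-action is free, so any boundary circle with nontrivial stabilizer $\Gamma(i)$ carries at least $|\Gamma(i)|$ base points. In that situation the moment map is the tuple of partial holonomies between consecutive base points and takes values in a nontrivial $G^{\beta}$-bitorsor (the right action is shifted by the cyclic order), so the object is already bitorsor-valued before any twisting, and the axioms (QH1)--(QH3) are not literally covered by the AMM recipe you invoke. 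This is exactly where the paper does its real work: it descends the Atiyah--Bott form through $\mathcal{A}_{\mathrm{flat}}(X)\to\mathcal{M}(X)\to M(X)=\mathcal{A}_{\mathrm{flat}}(X)/\mathcal{G}_{\beta}(X)$ and proves nondegeneracy by an explicit argument (constructing the auxiliary gauge parameters $\eta_j$ from $\Ad$ of partial holonomies and showing $\iota(\eta^{\#})\Phi^{*}\varpi=0$); the fusion-product description you start from is only established afterwards, using the generalized doubles $_{m_\infty}\mathbf{D}_{m_0}$ of Appendix A. Your route could be completed by proving the multi-base-point case directly from those generalized doubles, but as written the key existence statement is assumed rather than proved, and the appeal to ``naturality of the AMM construction'' for $\Gamma$-invariance of $\omega$ refers to a construction that is not the one you actually need.
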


While prepairing this paper, we were aware of the work of Meinrenken \cite{Meinrenken2025}, who showed that the space $\Hom(\Pi_1(\Sigma,\beta),G)$ has a quasi-Hamiltonian $G^{\beta}$-structure.

The paper is organized as follows.
We give the definition of a $\Gamma$-compatible quasi-Hamiltonian $G$-space in \S2. 
We discuss some of its basic properties in \S3.
In the second part \S4 and \S5, we apply the preceding results to moduli spaces of flat connections.

\section{Finite group actions on quasi-Hamiltonian spaces}

In this section we give the definition of a finite group $\Gamma$-compatible q-Hamiltonian $G$-space and present a basic example that is an analogue of the double.

\subsection{Compatibilities of bitorsors}

Let $G$ be a compact Lie group with Lie algebra $\mathfrak{g}$ and let $\Gamma$ be a finite group. We fix an action $\Gamma \to \Aut(G)$ of $\Gamma$ on $G$.  
Also we give a $G$-bitorsor $\bG$ equipped with a left $\Gamma$-action where a bitorsor $\bG$ is a manifold endowed with left and right $G$-actions that are both simply transitive and commute with each other.

\begin{defi}
A $\Gamma$-compatible $G$-bitorsor $\bG$ is a bitorsor $\bG$ equipped with a $\Gamma$-action such that:

 \begin{align}
  \varphi \cdot (g\cdot x) &= (\varphi \cdot g)(\varphi \cdot x) \hspace{15pt} \notag \\
    \varphi \cdot (x \cdot g) &= (\varphi \cdot x)(\varphi \cdot g) \notag \\
    & (\varphi \in \Gamma,\ g \in G,\ x \in \bG) \notag
 \end{align}
\end{defi}

\begin{exam}\label{exam_bitorsor}
  We define the space $\bG \coloneq \Map(\bZ/ m \bZ, G)$. 
 Fixing an action $\varphi \colon \bZ/m\bZ \to \Aut(G)$, we define a $\bZ / m \bZ $-action on $\bG$ by 
 \[ (l \cdot g)(k) \coloneq \varphi(l) g (k - l) \quad (l, k \in \bZ/ m \bZ,\ g \in \bG).\]
 Then the space $\bG$ is a $\bZ/ m \bZ$-compatible $\Map(\bZ/ m \bZ, G)$-bitorsor.
\end{exam}

\begin{prop}
Let $\bG,\ \bG_1,\ \bG_2$ be $\Gamma$-compatible $G$-bitorsors. Suppose that the fixed point sets $\bG^{\Gamma},\ \bG_1^{\Gamma},\ \bG_2^{\Gamma}$ are non-empty.
Then we have the following propositions.

\begin{enumerate}
  \item [$(1)$] The fixed point set $\bG^{\Gamma}$ becomes a $G^{\Gamma}$-bitorsor under the restriction of the action of $G$ on $\bG$,
  \item [$(2)$] The inverse $\bG^{-1}$ of the bitorsor $\bG$ and the product $\bG_1 \cdot \bG_2$ of $\bG_1$ and $\bG_2$ become a $\Gamma$-compatible $G$-bitorsors,
  \item [$(3)$] There exists an isomorphism as $G^{\Gamma}$-bitorsor
            \[\bG_1^{\Gamma}\cdot \bG_2^{\Gamma} \simeq (\bG_1 \cdot \bG_2)^{\Gamma}.\]
\end{enumerate}
\end{prop}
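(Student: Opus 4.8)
The plan is to settle the three assertions in order; the two compatibility identities are the only substantial input, as they are exactly what promotes torsor-theoretic facts about the $G$-action to the $G^{\Gamma}$-action, and (3) then follows from the elementary principle that an equivariant map between torsors over the same group is automatically an isomorphism. For (1), I would first recall that, $\Gamma$ being finite and acting smoothly on the manifold $\bG$, the fixed-point set $\bG^{\Gamma}$ is a closed submanifold, non-empty by hypothesis. The identity $\varphi\cdot(g\cdot x)=(\varphi\cdot g)(\varphi\cdot x)$ shows $G^{\Gamma}$ preserves $\bG^{\Gamma}$ under the left action, and symmetrically on the right; the two restricted actions commute and stay free. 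Transitivity is the only point with content: given $x,y\in\bG^{\Gamma}$ there is a unique $g\in G$ with $g\cdot x=y$, and applying $\varphi$ yields $(\varphi\cdot g)\cdot x=y$, so $\varphi\cdot g=g$ by uniqueness, i.e. $g\in G^{\Gamma}$. Hence $\bG^{\Gamma}$ is a $G^{\Gamma}$-bitorsor.

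For (2), I would equip $\bG^{-1}$ (the same manifold as $\bG$, with the left and right $G$-actions interchanged and inverted) with the $\Gamma$-action it already carries as $\bG$, and equip the contracted product $\bG_{1}\cdot\bG_{2}=(\bG_{1}\times\bG_{2})/G$, with classes written $[x_{1},x_{2}]$, with the action $\varphi\cdot[x_{1},x_{2}]:=[\varphi\cdot x_{1},\varphi\cdot x_{2}]$. The latter is well defined because $\varphi$ acts on $G$ by an automorphism, so the relation $(x_{1}\cdot g,\,g^{-1}\cdot x_{2})\sim(x_{1},x_{2})$ is preserved; the two compatibility identities for $\bG^{-1}$ and for $\bG_{1}\cdot\bG_{2}$ then reduce by direct substitution to those for $\bG$, resp. $\bG_{1}$ and $\bG_{2}$, using $\varphi\cdot g^{-1}=(\varphi\cdot g)^{-1}$. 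As $\bG^{-1}$ and $\bG_{1}\cdot\bG_{2}$ are $G$-bitorsors by the usual constructions, this gives (2).

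For (3), part (1) makes each $\bG_{i}^{\Gamma}$ a $G^{\Gamma}$-bitorsor, so $\bG_{1}^{\Gamma}\cdot\bG_{2}^{\Gamma}=(\bG_{1}^{\Gamma}\times\bG_{2}^{\Gamma})/G^{\Gamma}$ is again a $G^{\Gamma}$-bitorsor, and by (1) and (2) so is $(\bG_{1}\cdot\bG_{2})^{\Gamma}$, which is non-empty as it contains $[x_{1},x_{2}]$ for any $x_{1}\in\bG_{1}^{\Gamma}$, $x_{2}\in\bG_{2}^{\Gamma}$. Since $G^{\Gamma}\subseteq G$, the composite of $\bG_{1}^{\Gamma}\times\bG_{2}^{\Gamma}\hookrightarrow\bG_{1}\times\bG_{2}$ with the quotient map lands in $(\bG_{1}\cdot\bG_{2})^{\Gamma}$ and is constant on $G^{\Gamma}$-orbits, so it descends to a smooth map $\bar\Phi\colon\bG_{1}^{\Gamma}\cdot\bG_{2}^{\Gamma}\to(\bG_{1}\cdot\bG_{2})^{\Gamma}$ equivariant for the left and right $G^{\Gamma}$-actions. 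I would conclude by noting that any morphism of $G^{\Gamma}$-bitorsors is a bijection whose inverse is smooth (after choosing base points it is a translation of $G^{\Gamma}$), so $\bar\Phi$ is the asserted isomorphism.

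I do not anticipate a genuine obstacle: the proposition is bookkeeping with the compatibility axioms. The two points worth a line of care are the smooth structure on the fixed-point sets (the standard submanifold theorem for smooth actions of compact groups) and the injectivity of $\bar\Phi$, which is the uniqueness argument of (1) again --- if $[x_{1},x_{2}]=[x_{1}',x_{2}']$ with all four points $\Gamma$-fixed, the unique $g\in G$ with $x_{1}'=x_{1}\cdot g$, $x_{2}'=g^{-1}\cdot x_{2}$ is forced to be $\Gamma$-fixed; surjectivity is then automatic from the transitivity of the $G^{\Gamma}$-action on $(\bG_{1}\cdot\bG_{2})^{\Gamma}$ from (1).
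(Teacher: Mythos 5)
Your proposal is correct and follows essentially the same route as the paper: the same transported $\Gamma$-actions in (2) and the same canonical equivariant map in (3). You are in fact slightly more complete, since you make explicit the non-emptiness of $(\bG_1\cdot\bG_2)^{\Gamma}$ and the standard fact that a $G^{\Gamma}$-equivariant map of $G^{\Gamma}$-bitorsors is automatically an isomorphism, a step the paper leaves implicit after checking equivariance.
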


\begin{proof}
  The statement of $(1)$ is easy to verify.
  We define actions of $\Gamma$ on $\bG_1^{-1}$ and $\bG_1\cdot \bG_2$ by
  \[ \varphi \cdot x^{-1} \coloneq (\varphi \cdot x)^{-1},\]
  \[\varphi \cdot x_1 \cdot x_2 \coloneq (\varphi \cdot x_1) \cdot (\varphi \cdot x_2), \]
  for $\varphi \in \Gamma$ and $x_j \in \bG_j$.
  These actions satisfy the definition of $\Gamma$-compatibility.

  To show that the claim in $(3)$, we see the canonical map
  \[ \Phi \colon \bG_1^{\Gamma} \cdot \bG_2^{\Gamma} \to (\bG_1 \cdot \bG_2)^{\Gamma},\ x_1 \cdot x_2 \mapsto x_1 \cdot x_2\]
  is equivariant with respect to the left and right $G^{\Gamma}$-actions.
  For any $g \in G^{\Gamma}, x_1 \cdot x_2 \in \bG_1^{\Gamma} \cdot \bG_2^{\Gamma}$, 
  \begin{align*}
    \Phi(g (x_1 \cdot x_2)) &= \Phi(g x_1 \cdot x_2) \\
    &= gx_1 \cdot x_2 \\
    &= g (x_1 \cdot x_2) \\
    &= g \Phi(x_1 \cdot x_2).
  \end{align*}
  The equivariance of $\Phi$ with respect to the right $G^{\Gamma}$-action is varified in the same way as above.
\end{proof}

\subsection{Compatibilities of q-Hamiltonian $G$-spaces}

First, we recall the definition of twisted quasi-Hamiltonian $G$-spaces (\cite{B-Y2015},\cite{Meinrenken2017}).
Throught this paper, we fix an $\Ad$-invariant positive definite inner product $(\ ,\ )$ on $\mathfrak{g}$ and concider only bitorsors whose corresponding elements in $\Out(G)$ preserve this inner product $(\ ,\ )$ on $\mathfrak{g}$.
Also we assume that the $\Gamma$-action on $\mathfrak{g}$ induced by the action on $G$ preserves the inner product on $\mathfrak{g}$.
\begin{defi}
A (twisted) quasi-Hamiltonian $G$-space $(M, \omega, \mu)$ is a $G$-manifold together with an invariant 2-form $\omega$ and a $G$-equivariant map $\mu \colon M \to \bG$ to a $G$-bitorsor $\bG$, such that:

\begin{itemize}
    \item [(QH1)] $d\omega = \mu^* \frac{1}{12}(\theta,[\theta,\theta])$,
    \item [(QH2)] $\iota (\xi ^{\#})\omega = \frac{1}{2} \mu^* (\theta + \overline{\theta},\xi) \quad (\xi \in \mathfrak{g})$,   \quad $\xi^{\#}|_p = \frac{d}{dt}(e^{-t\xi}\cdot p)|_{t=0}$,
    \item [(QH3)] $\ker \omega_p \cap \ker d\mu_p =\{0\}    \quad (p \in M)$,
  \end{itemize}

  where $\theta$ (resp. $\overline{\theta}$) denotes the left (resp. right) Maurer--Cartan form on $\bG$. 

\end{defi}

We note that the restriction of the bracket $(\ ,\ )$ to the Lie algebra $\mathfrak{g}^{\Gamma}$ of the $\Gamma$-fixed point Lie subgroup $G^{\Gamma} \coloneqq \set{g \in G}{\varphi \cdot g = g,\ for\ all\ \varphi \in \Gamma}$ is also positive definite and invariant.
Now we give the definition of a $\Gamma$-compatible quasi-Hamiltonian $G$-space.

\begin{defi}\label{def_compati_q-Ham}
  A $\Gamma$-compatible quasi-Hamiltonian $G$-space is a twisted quasi-Hamiltonian $G$-space equipped with a $\Gamma$-action such that:

  \begin{enumerate}
    \item[$(1)$] $\varphi \cdot (g \cdot p) = (\varphi \cdot g) \cdot (\varphi \cdot p)  \quad (\varphi \in \Gamma,\ g \in G,\ p \in M),$
    \item[$(2)$] The q-Hamiltonian 2-form $\omega$ is $\Gamma$-invariant,
    \item[$(3)$] The moment map $\mu \colon M \to \bG$ is $\Gamma$-equivariant.
  \end{enumerate}

\end{defi}

\subsection{The double $\text{D}(\bG_1,\bG_2)$}\label{the_double}

We give a basic example of $\Gamma$-compatible quasi-Hamilton space.
Let $\bG_1$ and $\bG_2$ be $\Gamma$-compatible $G$-bitorsors.
We define a $G \times G$-action on $D \coloneq \bG_1 \times \bG_2$ by
\[ (g_1, g_2) \cdot (a,b) \coloneq (g_1 a g_2^{-1}, g_2 b g_1^{-1})  \quad ((g_1,g_2) \in G \times G,\ (a,b) \in D).\] 
Also we define a 2-form $\omega$ and a map $\mu$ by
\[ \omega \coloneq -\frac{1}{2}(a^* \theta, b^* \overline{\theta})-\frac{1}{2}(a^*\overline{\theta},b^*\theta),\]
\[\mu \colon D \to \bG_1\cdot \bG_2 \times \bG_1^{-1} \cdot \bG_2^{-1} ,\ (a,b) \mapsto (ab, a^{-1}b^{-1}).\]

The tuple $\textbf{D}(\bG_1,\bG_2) \coloneq (D,\omega, \mu)$ is a twisted quasi-Hamiltonian $G\times G$-space.
Moreover, by the diagonal action of $\Gamma$ on $D$ and $\bG_1\bG_2 \times \bG_1^{-1}\bG_2^{-1}$, the space $\text{D}(\bG_1,\bG_2)$ carries a $\Gamma$-compatible structure.

\section{Basic properties}

In this section, we study some basic properties of $\Gamma$-compatible q-Hamiltonian $G$-spaces.

\subsection{The set of fixed points of $(M, \omega, \mu)$}

We fix an action $\Gamma \to \Aut(G)$.
Let $(M,\omega, \mu)$ be a $\Gamma$-compatible q-Hamiltonian $G$-space with the moment map $\mu \colon M \to \bG$.
Let $M^{\Gamma}$ be the fixed point set of $M$ for the action of $\Gamma$ and let $\bG^{\Gamma}$ be the fixed point set of $\bG$.
We denote the identity component of $G^{\Gamma}$ by $H$.
Fix a connected component $N$ of $M^{\Gamma}$, and then we can take the connected component $\bH$ of $\bG$ so that $\mu(N) \subset \bH$.
Then $\bH$ becomes a $H$-bitorsor and the following theorem holds.

\begin{theo}\label{fixed_points_q-Ham}
 Then $(N, \omega|_N, \mu|_N)$ becomes a quasi-Hamiltonian $H$-space.
\end{theo}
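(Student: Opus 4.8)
The plan is to verify the defining axioms (QH1)--(QH3) of a twisted quasi-Hamiltonian $H$-space for $(N,\omega|_N,\mu|_N)$, treating the $H$-action, the target bitorsor, and the equivariance/invariance data as restrictions of the corresponding structures on $M$, and concentrating the work on the nondegeneracy axiom (QH3), which is the only place where finiteness of $\Gamma$ is used essentially.

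\emph{Setting up the $H$-data.} By Definition~\ref{def_compati_q-Ham}~(1), for $g\in G^{\Gamma}$ and $p\in M^{\Gamma}$ we get $\varphi\cdot(g\cdot p)=(\varphi\cdot g)(\varphi\cdot p)=g\cdot p$, so $G^{\Gamma}$ preserves $M^{\Gamma}$ and, being connected, $H$ preserves each component; thus $N$ is an $H$-manifold. Since $\mu$ is $\Gamma$-equivariant, $\mu(N)\subset\bG^{\Gamma}$; recall $\bG^{\Gamma}$ is a $G^{\Gamma}$-bitorsor, so the connected set $\mu(N)$ lies in one connected component $\bH$ of $\bG^{\Gamma}$, which is an $H$-bitorsor because $H$ is the identity component of $G^{\Gamma}$. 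A short check, using the standing hypothesis that $\Gamma$ preserves $(\ ,\ )$ on $\mathfrak{g}$, shows that the class of $\bH$ in $\Out(H)$ preserves $(\ ,\ )|_{\mathfrak{g}^{\Gamma}}$, so $\bH$ is an admissible target. Finally $\omega|_N$ is $H$-invariant and $\mu|_N\colon N\to\bH$ is $H$-equivariant, being restrictions of $\omega$ and $\mu$.

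\emph{Axioms (QH1) and (QH2).} Write $\iota\colon\bH\hookrightarrow\bG$ for the inclusion. Because $\bH$ is a sub-bitorsor, $\iota$ intertwines the Maurer--Cartan forms, $\iota^{*}\theta=\theta_{\bH}$ and $\iota^{*}\overline{\theta}=\overline{\theta}_{\bH}$, now $\mathfrak{g}^{\Gamma}$-valued (the tangent spaces to $\bH$ are spanned by the fundamental fields of the $H$-action, which agree with those of the $G$-action restricted to $H\subset G$). Pulling back (QH1) for $M$ along $N\hookrightarrow M$ and factoring $\mu|_N$ through $\iota$ gives $d(\omega|_N)=(\mu|_N)^{*}\frac{1}{12}(\theta_{\bH},[\theta_{\bH},\theta_{\bH}])$, using that the bracket and inner product of $\mathfrak{g}$ restrict to those of $\mathfrak{g}^{\Gamma}=\Lie(H)$. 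For (QH2), if $\xi\in\mathfrak{g}^{\Gamma}$ then $\xi^{\#}$ on $M$ is tangent to $N$ (as $H$ preserves $N$) and restricts to the corresponding fundamental field on $N$, so restricting $\iota(\xi^{\#})\omega=\frac{1}{2}\mu^{*}(\theta+\overline{\theta},\xi)$ along $N$ yields $\iota(\xi^{\#})(\omega|_N)=\frac{1}{2}(\mu|_N)^{*}(\theta_{\bH}+\overline{\theta}_{\bH},\xi)$.

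\emph{Axiom (QH3), the main obstacle.} Fix $p\in N$. Since $\Gamma$ is finite, $M^{\Gamma}$ is a submanifold with $T_{p}N=(T_{p}M)^{\Gamma}$, and similarly $T_{\mu(p)}\bH=(T_{\mu(p)}\bG)^{\Gamma}$. Let $\ave=\frac{1}{|\Gamma|}\sum_{\varphi\in\Gamma}\varphi$ act on $T_{p}M$; it is the projection onto $(T_{p}M)^{\Gamma}$ along $W:=\ker\ave$. The crucial point is that $(T_{p}M)^{\Gamma}$ and $W$ are $\omega_{p}$-orthogonal: for $v\in(T_{p}M)^{\Gamma}$ and $w\in W$, invariance of $\omega$ gives $\omega_{p}(v,w)=\frac{1}{|\Gamma|}\sum_{\varphi}\omega_{p}(\varphi\cdot v,\varphi\cdot w)=\omega_{p}(v,\ave(w))=0$. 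Hence $\ker(\omega|_N)_{p}=\ker\omega_{p}\cap T_{p}N$, while trivially $\ker d(\mu|_N)_{p}=\ker d\mu_{p}\cap T_{p}N$ since $d(\mu|_N)_{p}$ is the restriction of $d\mu_{p}$. Intersecting and applying (QH3) for $M$,
\[
\ker(\omega|_N)_{p}\cap\ker d(\mu|_N)_{p}=\bigl(\ker\omega_{p}\cap\ker d\mu_{p}\bigr)\cap T_{p}N=\{0\}.
\]
I expect this orthogonality of $(T_{p}M)^{\Gamma}$ and $\ker\ave$ to be the heart of the argument: it is exactly what prevents $\omega|_N$ from acquiring extra degeneracy; the remainder is bookkeeping with restrictions of forms and with the Maurer--Cartan forms of $\bH$.
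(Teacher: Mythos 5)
Your proposal is correct and follows essentially the same route as the paper: (QH1) and (QH2) by restricting the identities on $M$ and the Maurer--Cartan forms to $\bH$, and (QH3) via the averaging operator $\ave=\frac{1}{|\Gamma|}\sum_{\varphi}\varphi$ together with $\Gamma$-invariance of $\omega$ (your $\omega$-orthogonality of $(T_pM)^{\Gamma}$ and $\ker\ave$ is exactly the paper's computation $\omega(d\iota_N(X),Y)=\omega(d\iota_N(X),\ave(Y))=0$, just packaged as a statement about the splitting of $T_pM$). The extra care you take in verifying that $H$ preserves $N$ and that $\mu(N)$ lands in a single $H$-bitorsor component is setup the paper relegates to the paragraph preceding the theorem.
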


\begin{proof}
  
  Let $\theta^{\Gamma}$ (resp. $\overline{\theta}^{\Gamma}$) be the left (resp. right) Maurer-Cartan form on $\bH$.
  We denote the inclusions by $\iota_N \colon N \to M$ and $\iota_{\bH} \colon \bH \to \bG $.
  Set the Cartan 3-forms on $\bG$ and $\bH$
  \[\chi \coloneqq \frac{1}{12}(\theta,[\theta,\theta])_{\mathfrak{g}},\]
  \[\chi^{\Gamma} \coloneqq \frac{1}{12}(\theta^{\Gamma},[\theta^{\Gamma},\theta^{\Gamma}])_{\mathfrak{g}^{\Gamma}}.\]

  Since $\chi^{\Gamma} = \chi |_{\bH}$, we obtain the following 
  \begin{align*}
    d\omega|_N 
    &= (\mu^{*}\chi)|_{N} \\
    &=  (\mu|_N)^* \chi^{\Gamma}.
  \end{align*}

  Next we will show the moment map condition.

  For any $\xi \in \mathfrak{g}^{\Gamma}$
  
  \begin{align*}
    \iota(\xi^{\#})\omega|_N &= (\iota(\xi^{\#})\omega)|_{N} \\
    &= \left.\left( \frac{1}{2} \mu^*(\theta + \overline{\theta},\xi ) _{\mathfrak{g}}\right)\right| _{N} \\
    &= \frac{1}{2}{\mu|_N}^*(\theta^{\Gamma}+\overline{\theta}^{\Gamma},\xi)_{\mathfrak{g}^{\Gamma}}.
  \end{align*}

  The degeneracy condition of $\omega|_N$ is shown by using the averaging for $\Gamma$.
  For any $x \in N$ and $X \in \ker{\omega|_N}_x \cap \ker d_{x}\mu|_N$,

  \[d\mu \circ d\iota_{N}(X) = d\iota_{\bH} \circ d \mu|_N(X)= 0.\]
  
  This leads to $d\iota_{N}(X) \in \ker d_{x}\mu$.
  For any $Y \in T_xM$, the following holds by using $\Gamma$-invariance of $\omega$

  \begin{align*}
    \omega (d\iota_{N}(X), Y) &= \frac{1}{|\Gamma|}\sum_{\varphi \in \Gamma} \varphi^*\omega(d\iota_{N}(X),Y) \\
    &=  \omega\left(d\iota_{N}(X), \frac{1}{|\Gamma|}\sum \varphi_*Y\right) \\
    &=0.
  \end{align*}
Therefore we obtain $d \iota_{N}(X) \in \ker \omega_x \cap \ker d_x \mu = \{0\}$.
So, we conclude to $\ker\omega|_N \cap \ker d \mu|_N =\{0\}$.
\end{proof}

\subsection{Fusion products and $\Gamma$-compatibilities}
 
Let $G$, $H_1$, and $H_2$ be compact Lie groups.
We fix actions of a finite group $\Gamma$ on $G$ and $H_j$.
Let $(M_j, \omega_j, \mu_j)$ be a $\Gamma$-compatible quasi-Hamiltonian $G\times H_j$-space for $j=1,2$.
We denote the moment map $\mu_j \colon M_j \to \bG_j \times \bH_j$ by $\mu_j =(\mu^{G}_j,\mu^{H}_j)$.

\begin{prop}
  The fusion product $M_1 \circledast M_2$ carries a $\Gamma$-compatible quasi-Hamiltonian $G \times H_1 \times H_2$-structure for the diagonal action of $\Gamma$ on $M_1 \circledast M_2$, $G_1 \times H_1 \times H_2$ and $\bG_1 \bG_2 \times \bH_1 \times \bH_2$.
\end{prop}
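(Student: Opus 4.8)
The plan is to recall the construction of the fusion product, invoke the known (twisted) fusion theorem to supply the underlying twisted quasi-Hamiltonian structure, and then verify the three extra conditions of Definition~\ref{def_compati_q-Ham} for the diagonal $\Gamma$-action one at a time. Recall that $M_1 \circledast M_2$ has underlying manifold $M_1 \times M_2$, with $G$ acting diagonally and $H_j$ acting on the $j$-th factor; its moment map is $\mu := (\mu_1^G \cdot \mu_2^G,\ \mu_1^H,\ \mu_2^H)$ with values in $(\bG_1 \cdot \bG_2) \times \bH_1 \times \bH_2$, and its $2$-form is $\omega := \omega_1 + \omega_2 + \varpi$, where $\varpi := -\tfrac12\bigl((\mu_1^G)^*\theta,\ (\mu_2^G)^*\overline{\theta}\bigr)$ is the fusion correction term built from the Maurer--Cartan forms on the $G$-bitorsors and the pairing $(\ ,\ )$ (throughout, $\mu_1^G, \mu_2^G, \mu_1^H, \mu_2^H$ are tacitly precomposed with the projections $M_1 \times M_2 \to M_j$, which are $\Gamma$-equivariant for the diagonal action). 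That $(M_1 \circledast M_2, \omega, \mu)$ is a twisted quasi-Hamiltonian $G \times H_1 \times H_2$-space is the fusion theorem in the twisted setting (see \cite{Meinrenken2017}, \cite{B-Y2015}, \cite{AMM}), and by Proposition~2.1 the product $\bG_1 \cdot \bG_2$ of the $\Gamma$-compatible $G$-bitorsors $\bG_1, \bG_2$ is again a $\Gamma$-compatible $G$-bitorsor, carrying the $\Gamma$-action $\varphi \cdot (x_1 \cdot x_2) = (\varphi \cdot x_1)(\varphi \cdot x_2)$. So only conditions $(1)$--$(3)$ of Definition~\ref{def_compati_q-Ham} remain.

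Conditions $(1)$ and $(3)$ I expect to be purely formal and to reduce factor by factor to the corresponding conditions for $M_1$ and $M_2$. For $(1)$: since $\Gamma$ acts diagonally on both $M_1 \times M_2$ and $G \times H_1 \times H_2$, applying condition $(1)$ for $M_1$ and for $M_2$ in the two slots gives
\[
\varphi \cdot \bigl((g, h_1, h_2) \cdot (p_1, p_2)\bigr) = \bigl(\varphi \cdot (g, h_1, h_2)\bigr) \cdot \bigl(\varphi \cdot (p_1, p_2)\bigr).
\]
For $(3)$: the components $\mu_1^H, \mu_2^H$ of $\mu$ are $\Gamma$-equivariant because $\mu_1, \mu_2$ are, and for the $\bG_1 \cdot \bG_2$-component, $\Gamma$-equivariance of $\mu_1^G$ and $\mu_2^G$ together with the $\Gamma$-action on $\bG_1 \cdot \bG_2$ recalled above yields
\[
\mu_1^G(\varphi \cdot p_1)\,\mu_2^G(\varphi \cdot p_2) = \bigl(\varphi \cdot \mu_1^G(p_1)\bigr)\bigl(\varphi \cdot \mu_2^G(p_2)\bigr) = \varphi \cdot \bigl(\mu_1^G(p_1)\,\mu_2^G(p_2)\bigr),
\]
so $\mu$ is $\Gamma$-equivariant.

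The one step I expect to require genuine care is condition $(2)$, the $\Gamma$-invariance of $\omega = \omega_1 + \omega_2 + \varpi$. The pullbacks of $\omega_1$ and $\omega_2$ along the projections are $\Gamma$-invariant since each $\omega_j$ is and the projections are $\Gamma$-equivariant, so it will suffice to show $\varphi^*\varpi = \varpi$. Here I would use that $\mu_1^G$ and $\mu_2^G$ intertwine the $\Gamma$-actions and that $\varphi$ acts on the bitorsors $\bG_1, \bG_2$ compatibly with the group automorphism $\varphi \in \Aut(G)$, so that the pullback under $\varphi$ of the left (resp.\ right) Maurer--Cartan form on $\bG_j$ is that form followed by the induced Lie algebra automorphism $\varphi_* \in \Aut(\mathfrak{g})$; then
\[
\varphi^*\varpi = -\tfrac12\bigl(\varphi_*\,(\mu_1^G)^*\theta,\ \varphi_*\,(\mu_2^G)^*\overline{\theta}\bigr) = -\tfrac12\bigl((\mu_1^G)^*\theta,\ (\mu_2^G)^*\overline{\theta}\bigr) = \varpi,
\]
the middle equality being exactly the assumed $\Gamma$-invariance of the inner product on $\mathfrak{g}$. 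This cancellation — the Maurer--Cartan forms are not $\Gamma$-invariant individually, but the twists by $\varphi_*$ on the two factors of $\varpi$ are absorbed by the invariance of $(\ ,\ )$ — is the crux of the argument; everything else is formal. With $(1)$, $(2)$, $(3)$ verified, $M_1 \circledast M_2$ is a $\Gamma$-compatible quasi-Hamiltonian $G \times H_1 \times H_2$-space.
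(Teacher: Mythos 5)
Your proposal is correct and follows essentially the same route as the paper: the only substantive point is the $\Gamma$-invariance of the fusion correction term $-\tfrac12((\mu_1^G)^*\theta,(\mu_2^G)^*\overline{\theta})$, which both you and the paper obtain by pushing the action of $\varphi$ through the Maurer--Cartan forms via the $\Gamma$-compatibility of the bitorsors and then absorbing the two induced twists by $\varphi_*$ into the $\Gamma$-invariance of the inner product on $\mathfrak{g}$. Your treatment of conditions $(1)$ and $(3)$ just spells out what the paper dismisses as easy, so there is nothing to flag.
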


\begin{proof}
  We show the quasi-Hamiltonian 2-form on $M_1 \circledast M_2$
  \[\omega = \omega_1 + \omega_2 - \frac{1}{2}((\mu_1^{G})^*\theta, (\mu_2^{G})^* \overline{\theta})\]
  is $\Gamma$-invariant. Since $\bG_j$ are $\Gamma$-compatible and the inner product $(\ ,\ )$ on $\mathfrak{g}$ is invariant under the action of $\Gamma$, for any $\varphi \in \Gamma$
  \begin{align*}
    \varphi^*\omega &= \varphi ^* \omega_1 + \varphi ^* \omega_2 -\frac{1}{2}\varphi^*((\mu_1^G)^*\theta,(\mu_2^G)^*\overline{\theta}) \\
    &= \omega_1 + \omega_2 -\frac{1}{2}(\varphi \cdot (\mu_1^G)^*\theta, \varphi \cdot (\mu_2^G)^* \overline{\theta}) \\
    &= \omega_1 + \omega_2 -\frac{1}{2}((\mu_1^G)^*\theta, (\mu_2^G)^* \overline{\theta}) \\
    &= \omega . 
  \end{align*}
  The conditions $(1)$ and $(3)$ in Definition \ref{def_compati_q-Ham} are easy to show in this case.
\end{proof}

Moreover, we concider $M_j^{\Gamma}$ are non-empty connected sets and $\bG_j^{\Gamma}$ and we concider $\bH_j^{\Gamma}$ are non-empty set.
One can prove the following proposition easily.

\begin{prop}\label{fusion_and_fixedpoints}
  There exists a canonical twisted quas-Hamiltonian isomorphism 
  \[M_1^{\Gamma} \circledast M_2^{\Gamma} \simeq (M_1 \circledast M_2)^{\Gamma}.\]  
\end{prop}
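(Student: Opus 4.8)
The plan is to exhibit an explicit map and check it intertwines all the structure. Recall the fusion product $M_1 \circledast M_2$ has underlying manifold $M_1 \times M_2$ with the $G \times H_1 \times H_2$-action described earlier, moment map $(\mu_1^G \mu_2^G, \mu_1^H, \mu_2^H)$ taking values in $\bG_1 \bG_2 \times \bH_1 \times \bH_2$, and 2-form $\omega = \omega_1 + \omega_2 - \tfrac12((\mu_1^G)^*\theta, (\mu_2^G)^*\overline\theta)$. First I would define
\[
  \Psi \colon M_1^{\Gamma} \circledast M_2^{\Gamma} \to (M_1 \circledast M_2)^{\Gamma}, \qquad (p_1, p_2) \mapsto (p_1, p_2),
\]
the obvious identification on points. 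The underlying set of $M_1^{\Gamma} \circledast M_2^{\Gamma}$ is $M_1^{\Gamma} \times M_2^{\Gamma}$, and since the $\Gamma$-action on $M_1 \circledast M_2$ is the diagonal one, $(M_1 \circledast M_2)^{\Gamma} = M_1^{\Gamma} \times M_2^{\Gamma}$ as sets; so $\Psi$ is a bijection (and a diffeomorphism, fixed-point sets of smooth finite group actions being submanifolds). Here I use Theorem \ref{fixed_points_q-Ham} to know that each side carries a genuine twisted quasi-Hamiltonian structure: the right-hand side as a connected component of the $\Gamma$-fixed locus of the $\Gamma$-compatible space $M_1 \circledast M_2$ (with structure group the identity component of $(G \times H_1 \times H_2)^{\Gamma}$), and the left-hand side as the fusion of $M_1^{\Gamma}$ and $M_2^{\Gamma}$, each of which is twisted quasi-Hamiltonian by the same theorem.

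Next I would verify compatibility of the three pieces of data. For the group action: the structure group on the right is the identity component $K$ of $(G \times H_1 \times H_2)^{\Gamma} = G^{\Gamma} \times H_1^{\Gamma} \times H_2^{\Gamma}$, and the fusion of $M_1^{\Gamma}$ (a $G^{\Gamma} \times H_1^{\Gamma}$-space) with $M_2^{\Gamma}$ (a $G^{\Gamma} \times H_2^{\Gamma}$-space) has structure group $(G^{\Gamma})_0 \times (H_1^{\Gamma})_0 \times (H_2^{\Gamma})_0 = K$; both actions are restrictions of the ambient fusion action, so they agree. For the moment map: the diagonal-$\Gamma$-fixed points of the fused moment map land in $(\bG_1 \bG_2)^{\Gamma} \times \bH_1^{\Gamma} \times \bH_2^{\Gamma}$, and Proposition 2.1(3) gives the canonical identification $(\bG_1 \bG_2)^{\Gamma} \simeq \bG_1^{\Gamma} \bG_2^{\Gamma}$; under this identification the fused moment map restricted to $M_1^{\Gamma} \times M_2^{\Gamma}$ becomes exactly $(\mu_1^G|_{N_1} \cdot \mu_2^G|_{N_2}, \mu_1^H|_{N_1}, \mu_2^H|_{N_2})$, which is the moment map of $M_1^{\Gamma} \circledast M_2^{\Gamma}$. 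For the 2-form: pulling back $\omega$ along $\iota_{N_1 \times N_2}$ gives $\omega_1|_{N_1} + \omega_2|_{N_2} - \tfrac12((\mu_1^G|_{N_1})^*\theta, (\mu_2^G|_{N_2})^*\overline\theta)$, and since the Maurer–Cartan forms on $\bG_j$ restrict to those on $\bG_j^{\Gamma}$ and the inner product $(\ ,\ )$ restricts to the $\Gamma$-invariant one on $\mathfrak{g}^{\Gamma}$, this coincides with the fusion 2-form of $M_1^{\Gamma} \circledast M_2^{\Gamma}$.

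The mildly subtle point — the main obstacle — is bookkeeping about which connected component of which fixed-point set one is working in, since Theorem \ref{fixed_points_q-Ham} is stated componentwise: strictly one should fix a connected component $N$ of $(M_1 \circledast M_2)^{\Gamma}$, observe that $N = N_1 \times N_2$ for connected components $N_j \subset M_j^{\Gamma}$ (using that a product of connected sets is connected and conversely), and then match the bitorsor components $\bH^G, \bH^{H_1}, \bH^{H_2}$ on the two sides using Proposition 2.1(3) component-by-component. Once this is pinned down, everything else is the routine restriction-of-forms computation sketched above, and I would only write out the 2-form comparison in detail, noting that the action- and moment-map comparisons are immediate. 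Finally, I would remark that $\Psi$ is a twisted quasi-Hamiltonian isomorphism because it is a $K$-equivariant diffeomorphism carrying one 2-form to the other and commuting with the (identified) moment maps, which is exactly the definition.
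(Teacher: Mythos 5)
Your argument is correct and is exactly the routine verification the paper leaves to the reader (the paper offers no proof beyond ``one can prove the following proposition easily''): the identity map on $M_1^{\Gamma}\times M_2^{\Gamma}=(M_1\times M_2)^{\Gamma}$, the bitorsor identification $(\bG_1\bG_2)^{\Gamma}\simeq\bG_1^{\Gamma}\bG_2^{\Gamma}$ from Proposition 2.1(3), and the restriction of the fusion 2-form and moment map. Note only that the paper's standing hypothesis for this proposition already assumes each $M_j^{\Gamma}$ is non-empty and connected (with the disconnected case deferred to the following remark), so your component bookkeeping, while harmless, is largely taken care of by assumption.
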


\begin{remk}
  In Proposition \ref{fusion_and_fixedpoints}, if $M_j^{\Gamma}$ are not connected, we can obtain a similar result as above by taking connected components of $M_1^{\Gamma}$ and $M_2^{\Gamma}$.
\end{remk}

\begin{exam}(Fused double $\bD(\bG_1,\bG_2)$)

  Let $\bG_1$ and $\bG_2$ be $\Gamma$-compatible $G$-bitorsor.
  The internal fusion of the double $\textbf{D}(\bG_1,\bG_2)$ (see Example \ref{the_double}) is denoted by $\bD(\bG_1,\bG_2)$, and is called the fused double.
  The space $\bD(\bG_1,\bG_2)$ carries a $\Gamma$-compatible structure.
  Moreover $\bG_1^{\Gamma}$ and $\bG_2^{\Gamma}$ are submanifold of $\bG_1$ and $\bG_2$ respectively.
  We obtain an isomorphism of twisted quasi-Hamiltonian $G^{\Gamma}$-space
  \[ \bD(\bG_1,\bG_2)^{\Gamma} \simeq \bD(\bG_1^{\Gamma},\bG_2^{\Gamma}).\]
\end{exam}

\section{Cyclic group actions on loop groups}

\subsection{Twisted loop groups}
Let $G$ be a compact connected simply connected Lie group 
with Lie algebra $\mathfrak{g}$. We fix an $\Ad$-invariant inner product $(\ ,\ )$ on $\mathfrak{g}$
and a Lie group automorphism $\kappa \in \Aut(G)$ such that there exists a positive integer $m$ which satisfies $\kappa^m=1$.
We suppose that $\kappa$ preserves the inner product $(\ ,\ )$ on $\mathfrak{g}$.
Let $S^{1}_m$ denote $\bR/ m \bZ$.
We define the loop group $LG$ as a space of maps
\[LG \coloneq \Map(S^1_m,G)\]
of a fixed Sobolev class $\lambda > 1/2$.
Also we define the $\kappa$-twisted loop group $L^{(\kappa)}G$ as a subgroup of $LG$
\[L^{(\kappa)}G \coloneq \set{g \in LG}{ \kappa g (t) = g(t+1), for\ all\ t \in S^1_m}.\]
We define the spaces $L^{(\kappa)}\mathfrak{g}$ and $L^{(\kappa)}\mathfrak{g}^*$ as subspaces of the space of maps $L\mathfrak{g} \coloneq \Map(S^1_m, \mathfrak{g})$ of Sobolev class $\lambda$ and the space of $\mathfrak{g}$-valued 1-forms $L\mathfrak{g}^* \coloneq \Omega^1(S^1_m,\mathfrak{g})$ of Sobolev class $\lambda-1$.
\begin{align}
  L^{(\kappa)}\mathfrak{g} &\coloneqq \set{\xi \in L\mathfrak{g}}{\kappa \xi (t) = \xi (t+1),\ for\ all\ t \in S^1_m  },  \notag \\
  L^{(\kappa)}\mathfrak{g}^{*} &\coloneqq \set{A \in L\mathfrak{g}^*}{\kappa  A_t = A_{t+1},\ for\ all\ t \in S^1_m}.    \notag
\end{align}
The gauge transformation on $L^{(\kappa)}\mathfrak{g}^*$ is defined by
\[g \cdot A \coloneqq gAg^{-1} + dgg^{-1} \quad (g \in L^{(\kappa)}G,\ A \in L^{(\kappa)}\mathfrak{g}^*).\] 
There exists a canonical pairing 
\[L^{(\kappa)}\mathfrak{g}^* \times L^{(\kappa)}\mathfrak{g} \to \bR,\  (A, \xi) \mapsto \int_{0}^{1}(A,\xi).\]
We define the holonomy $\Hol^{b}_t(A)$ of $A$ starting at $b \in S^{1}_m$ by the unique solution of the following differential equation

\begin{align}
   \Hol^{b}_t(A)^{-1} \frac{\partial \Hol^{b}_t(A)}{\partial t} &= -A ,\notag \\
  \Hol^{b}_b(A) &= 1_G. \notag
\end{align}

Then, the holonomy and the gauge action satisfy the following equation
\[ \Hol^{b}_t(g\cdot A) = g(b) \Hol ^{b}_t(A) g(t)^{-1} \quad (g \in L^{(\kappa)}G,\ A \in L^{(\kappa)}\mathfrak{g}^*).\]

Therefore, we have the holonomy map 
\[\Hol^{(\kappa)} \colon L^{(\kappa)}\mathfrak{g}^{*} \to G^{(\kappa)} , A \mapsto \Hol^{0}_1(A),\]
where $G^{(\kappa)} = G$ is a $G$-bitorsor with the following left and right actions
\begin{align*}
  g\cdot x &\coloneq gx, \\
  x \cdot g &\coloneq x (\kappa g) \quad (x \in G^{(\kappa)},\ g \in G).
\end{align*}

Let $\theta$ (resp. $\overline{\theta}$) be the left (resp. right) Maurer-Cartan form on $G$. 
We define the 2-form on $L^{(\kappa)}\mathfrak{g}^*$ by
\[ \varpi ^{(\kappa)} \coloneqq \frac{1}{2} \int_{0}^{1} ds ({\Hol^{0}_s}^* \overline{\theta}, \frac{\partial}{\partial s}{\Hol^{0}_s}^* \overline{\theta}).\]

The following proposition is an analogy of \cite[Proposition 8.1]{AMM}

\begin{prop}
\begin{align}
  &\varpi^{(\kappa)} \  \text{is}\   L^{(\kappa)}G  \text{-invariant},\\
  &d \varpi^{(\kappa)} = - {\Hol^{(\kappa)}}^* \frac{1}{12} (\theta,[\theta,\theta]),\\
  &\iota(\xi^{\#})\varpi^{(\kappa)} = - d \int_{0}^{1}(A,\xi) - \frac{1}{2} {\Hol^{(\kappa)}}^*(\theta+\overline{\theta},\xi)\ \ \xi \in L^{(\kappa)}\mathfrak{g}.
\end{align}
\end{prop}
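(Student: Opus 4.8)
The plan is to follow the strategy of \cite[Proposition 8.1]{AMM} as closely as possible, inserting the twisting automorphism $\kappa$ where needed. First I would establish $(3.1)$, the $L^{(\kappa)}G$-invariance of $\varpi^{(\kappa)}$. Since $\varpi^{(\kappa)}$ is built entirely from $\Hol^0_s$ and the right Maurer--Cartan form $\overline{\theta}$, this reduces to the transformation rule $\Hol^b_t(g\cdot A) = g(b)\Hol^b_t(A)g(t)^{-1}$ stated above; the key point is that the factor $g(0)$ at the left endpoint is constant in $s$, so pulling back $\overline{\theta}$ kills it, and the right factor $g(s)^{-1}$ contributes a term that vanishes after the $s$-integration because of the boundary behaviour (one should check the contribution at $s=0$ versus $s=1$ using the defining relation $\kappa g(0)=g(1)$ together with $\Ad$-invariance and $\kappa$-invariance of $(\ ,\ )$). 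This is essentially the same Stokes-type computation as in the untwisted case; the twisting only enters in checking that the endpoint contributions match.

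Next, for $(3.2)$ and $(3.3)$ I would compute $d\varpi^{(\kappa)}$ and $\iota(\xi^{\#})\varpi^{(\kappa)}$ directly. The cleanest route is to introduce the evaluation-type map $L^{(\kappa)}\mathfrak{g}^*\times[0,1]\to G$, $(A,s)\mapsto \Hol^0_s(A)$, and rewrite $\varpi^{(\kappa)}$ as a fibre integral over $[0,1]$ of the pullback of $\tfrac12(\overline\theta,\partial_s\overline\theta)$-type data; then $d\varpi^{(\kappa)}$ is computed by the Cartan formula for fibre integration, producing a bulk term that is the pullback of the Cartan 3-form $\tfrac{1}{12}(\theta,[\theta,\theta])$ evaluated at $s=1$ (this is $\Hol^{(\kappa)}$ by definition) together with boundary contributions at $s=0$ and $s=1$. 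At $s=0$ the holonomy is $1_G$, so that term drops; at $s=1$ one uses the twisted periodicity to identify the endpoint with the bitorsor-valued holonomy. The sign in $(3.2)$ comes from the orientation convention in the fibre integral. For $(3.3)$, I would differentiate along the flow of $\xi^\#$ generated by the gauge action of $e^{-t\xi}\in L^{(\kappa)}G$, use the infinitesimal version of the holonomy transformation rule (which gives a variation-of-holonomy formula involving $\int_0^s \Ad_{\Hol^0_\tau}\xi(\tau)\,d\tau$), and reorganize; the term $-d\int_0^1(A,\xi)$ arises precisely from the $A$-dependence of the pairing, exactly as in \cite{AMM}, and the remaining piece assembles into $-\tfrac12{\Hol^{(\kappa)}}^*(\theta+\overline\theta,\xi)$.

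The main obstacle I anticipate is bookkeeping at the seam $t=1\sim t+1$: because elements of $L^{(\kappa)}\mathfrak{g}^*$ satisfy $\kappa A_t = A_{t+1}$ rather than genuine periodicity, the holonomy over a full period is $\Hol^0_1(A)$ viewed in the bitorsor $G^{(\kappa)}$, and one must be careful that all the ``boundary at $s=1$'' terms are expressed using the \emph{bitorsor} Maurer--Cartan forms (with the right action twisted by $\kappa$) rather than the ordinary ones on $G$. Concretely, the identity $\chi^{\Gamma}=\chi|_{\bH}$-type compatibility used in the proof of Theorem \ref{fixed_points_q-Ham} has an analogue here: the Cartan $3$-form and the $(\theta+\overline\theta,\xi)$ form on $G^{(\kappa)}$ agree with those on $G$ under the identification $G^{(\kappa)}=G$ precisely because $\kappa$ preserves $(\ ,\ )$, so once this is noted the twisted computation runs formally parallel to the untwisted one in \cite{AMM}. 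I would therefore state that compatibility as a short lemma first, and then the three identities follow from the \cite{AMM} computation verbatim, with $\kappa$ only affecting the endpoint identifications.
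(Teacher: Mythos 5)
Your plan is correct and coincides with the paper's own proof, which simply states that the proposition ``is shown in the same way as in Proposition \ref{formula on loop group}'', i.e.\ by the AMM-style computation you outline: the holonomy transformation rule for the invariance, the Maurer--Cartan structure equation with integration by parts in $s$ for the exterior derivative, and the variation-of-holonomy formulas of Lemma \ref{lemma of calc of Hol} for the contraction, with the twist entering only through the identification $G^{(\kappa)}=G$ and the $\kappa$-invariance of $(\ ,\ )$. One small correction to your sketch of the invariance: no Stokes/boundary cancellation is needed --- the right factor in $\Hol^0_s(g\cdot A)=g(0)\Hol^0_s(A)g(s)^{-1}$ is killed pointwise because $\overline{\theta}$ is right-invariant, yielding $g^*{\Hol^0_s}^*\overline{\theta}=\Ad(g(0)){\Hol^0_s}^*\overline{\theta}$ with $g(0)$ constant in $s$, after which $\Ad$-invariance of the inner product gives the claim.
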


\begin{proof}
  It is shown in the same way as in Proposition \ref{formula on loop group}.
\end{proof}

\subsection{Cyclic group actions}

In this subsection, let $\Gamma$ be a cyclic group $\bZ / m \bZ$ , and let $S^{1}$ denote $\bR / m \bZ$.
We concider a $\Gamma$-action on $G$, i.e. a group homomorphism $\Gamma \to \Aut(G)$. Also a natural $\Gamma$-action on $S^{1}$ is defined by
$ t \cdot l \coloneqq t -l $ for $t \in S^{1}$ and $l \in \Gamma$.
Then we can define a $\Gamma$-action on the loop group $LG$ by

\[ l \cdot g (t) \coloneqq \kappa^{l}  g(t \cdot l) \quad (g \in LG,\ t \in S^{1},\ l \in \Gamma).\]

$\Gamma$-actions on $L\mathfrak{g}$ and $L\mathfrak{g}^{*}$ are defined by the same way.
Let $\beta$ be a finite subset $0\cdot \Gamma = \{0,1,\dots,m-1\} \subset S^{1}$, and we denote $G^{\beta} \coloneqq \Map(\beta,G)$.
Also we denote Maurer-Cartan forms on $G^{\beta}$ by $\theta$ and $\overline{\theta}$.
Similar to the twisted case described above, gauge transformations, holonomies and natural pairing are defined.
Furethemore, we define the two-form on $L\mathfrak{g}^{*}$ by
\[\varpi \coloneqq \frac{1}{2} \sum_{i=1}^{m} \int_{i}^{i+1}ds ({\Hol^{i}_s}^{*}\overline{\theta},\frac{\partial}{\partial s}{\Hol^{i}_s}^{*}\overline{\theta}).\]
Also the holonomy map $\Hol \colon L\mathfrak{g}^{*} \to \bH \coloneqq G^{\beta}$ is defined by the following
\[ \Hol(A) \coloneqq (\Hol^{i}_{i+1}(A))^{m}_{i=1} \quad(A \in L\mathfrak{g}^{*}),\]
where $\bH$ is a $\Gamma$-compatible $G^{\beta}$-bitorsor (Example \ref{exam_bitorsor}).

In this case, the following proposition holds in the same way as in \cite[Proposition 8.1]{AMM}.

\begin{prop}\label{formula on loop group}

  \begin{gather}
  \varpi\ \text{is}\ LG\text{-invariant}, \\ 
  d \varpi = - \Hol ^{*}\frac{1}{12}(\theta,[\theta,\theta]), \\
  \iota(\xi^{\#})\varpi_A = -d \oint (A,\xi) - \frac{1}{2}\Hol^*(\theta + \overline{\theta}, \xi|_\beta) \quad (\xi \in L\mathfrak{g},\ A \in L\mathfrak{g}^*) 
  \end{gather}
  where $\xi|_{\beta} = (\xi(i))_{i \in \beta} \in \mathfrak{g}^{\beta}$.
\end{prop}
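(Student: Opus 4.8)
The plan is to prove the three assertions by mirroring the proof of \cite[Proposition~8.1]{AMM}, carried out one segment at a time; the only new ingredient is the bookkeeping over the $m$ base points in $\beta$. For each $i\in\{1,\dots,m\}$ introduce the tautological holonomy map $\Psi_i\colon [i,i+1]\times L\mathfrak{g}^*\to G$, $(s,A)\mapsto\Hol^i_s(A)$, which is constant equal to $1_G$ on $\{i\}\times L\mathfrak{g}^*$ and restricts to $\pr_i\circ\Hol=\Hol^i_{i+1}$ on $\{i+1\}\times L\mathfrak{g}^*$. Write $\beta_i:=(\Hol^i_s)^*\overline{\theta}$ for the resulting $s$-dependent $1$-form on $L\mathfrak{g}^*$ and $f_i:=\iota(\partial_s)\Psi_i^*\overline{\theta}$, so that the ODE defining the holonomy gives $f_i=-\Ad_{\Hol^i_s}(\iota(\partial_s)A)$ and $\varpi=\tfrac12\sum_{i=1}^m\int_i^{i+1}ds\,(\beta_i,\partial_s\beta_i)$. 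The $LG$-invariance is then immediate from the transformation rule $\Hol^b_t(g\cdot A)=g(b)\Hol^b_t(A)g(t)^{-1}$ (the analogue of the one recalled for $L^{(\kappa)}G$): the right Maurer--Cartan form being invariant under right translations, the substitution $A\mapsto g\cdot A$ replaces each $\beta_i$ by $\Ad_{g(i)}\beta_i$ with $g(i)$ a fixed element of $G$ (independent of $A$ and of $s$), and every summand of $\varpi$ is then preserved by $\Ad$-invariance of $(\ ,\ )$.

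For the curvature formula, split the Maurer--Cartan equation for $\Psi_i^*\overline{\theta}$ on $[i,i+1]\times L\mathfrak{g}^*$ into its $ds$-part and its $L\mathfrak{g}^*$-part (with $d$ the exterior derivative on $L\mathfrak{g}^*$, $s$ treated as a parameter); this yields $d\beta_i=\tfrac12[\beta_i,\beta_i]$ and $\partial_s\beta_i=df_i+[f_i,\beta_i]$. Substituting the latter into $\tfrac12(\beta_i,\partial_s\beta_i)$ and using $\Ad$-invariance of $(\ ,\ )$, a computation of the kind in \cite{AMM} shows that the $i$-th summand of $\varpi$ equals $-\int_{[i,i+1]}\Psi_i^*\chi$ modulo an exact form, where $\chi=\tfrac1{12}(\theta,[\theta,\theta])$. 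Since $\chi$ is closed and $\Psi_i$ is constant on $\{i\}\times L\mathfrak{g}^*$, Stokes' theorem for integration over the fibre $[i,i+1]$ gives $d\bigl(-\int_{[i,i+1]}\Psi_i^*\chi\bigr)=-(\pr_i\circ\Hol)^*\chi$; summing over $i$ and using that $\chi$ on $G^\beta$ is $\sum_i\pr_i^*\chi$ gives $d\varpi=-\Hol^*\chi$.

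The moment map condition is proved in the same spirit. For $\xi\in L\mathfrak{g}$ the transformation rule gives $\iota(\xi^{\#})\beta_i=\Ad_{\Hol^i_s}(\xi(s))-\xi(i)$. Contracting $\xi^{\#}$ into the $i$-th summand of $\varpi$, substituting $\partial_s\beta_i=df_i+[f_i,\beta_i]$, and integrating in $s$, one collects (as in \cite{AMM}) the term $-d\int_i^{i+1}(A,\xi)$ together with contributions at $s=i$ and $s=i+1$; those at $s=i$ vanish because $\beta_i|_{s=i}=0$, while those at $s=i+1$, after using $H^*\overline{\theta}=\Ad_H(H^*\theta)$, become $-\tfrac12\bigl((\Hol^i_{i+1})^*\theta,\xi(i+1)\bigr)-\tfrac12\bigl((\Hol^i_{i+1})^*\overline{\theta},\xi(i)\bigr)$, with $\theta,\overline{\theta}$ the Maurer--Cartan forms of $G$. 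Summing over $i$ produces $-d\oint_{S^1}(A,\xi)$ plus a sum in which the index of the $\xi(i+1)$-terms is shifted by one; the total is $-\tfrac12\Hol^*(\theta+\overline{\theta},\xi|_\beta)$, now with $\theta,\overline{\theta}$ the Maurer--Cartan forms of the bitorsor $\bH=G^\beta$, whose left Maurer--Cartan form has its $i$-th slot coming from the $(i{-}1)$-st factor --- exactly the cyclic twist built into the bitorsor structure of Example~\ref{exam_bitorsor}. I expect this last reconciliation to be the main obstacle: one must check carefully that, under the identification $\{1,\dots,m\}\cong\beta$, the right-endpoint contributions of the consecutive segments $[i-1,i]$ and $[i,i+1]$ combine into the Maurer--Cartan pairing of the bitorsor $\bH$ against $\xi|_\beta$ rather than an uncontrolled combination of the individual segment holonomies, and that all sign conventions --- for integration along the fibre and for the graded brackets --- are fixed as in \cite{AMM}.
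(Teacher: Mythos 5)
Your proposal is correct and follows essentially the same route as the paper: the invariance from the transformation rule $\Hol^b_t(g\cdot A)=g(b)\Hol^b_t(A)g(t)^{-1}$ plus $\Ad$-invariance of the pairing, the contraction identities $\iota(\xi^{\#}){\Hol^i_s}^*\overline{\theta}=\Ad(\Hol^i_s)\xi(s)-\xi(i)$ and $\iota(\eta){\Hol^i_s}^*\overline{\theta}=-\int_i^s\Ad(\Hol^i_u)\eta\,du$ (the paper's Lemma~\ref{lemma of calc of Hol}), and the segment-by-segment endpoint bookkeeping that reassembles into the bitorsor Maurer--Cartan pairing on $\bH=G^{\beta}$. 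Your Stokes/fibre-integration packaging of the $d\varpi$ computation is just a reformulation of the paper's direct integration by parts in $s$, so no substantive difference.
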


To prove it, we use the following lemma.

\begin{lemma}\label{lemma of calc of Hol}
  \begin{gather}
    \iota(\xi^{\#}){\Hol^{i}_s}^{*}\overline{\theta} = \Ad(\Hol^{i}_s)\xi(s) - \xi(i) \quad (\xi \in L\mathfrak{g}), \\
    \iota(\eta) {\Hol^{i}_s}^*\overline{\theta} = - \int_{i}^{s}du \Ad (\Hol^{i}_u)\eta  \quad (\eta \in T_A L\mathfrak{g}^{*}=L\mathfrak{g}^*).
  \end{gather}
\end{lemma}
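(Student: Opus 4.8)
The plan is to compute the contraction of the vector fields $\xi^{\#}$ and $\eta$ with the pullback $(\Hol^i_s)^*\overline{\theta}$ directly from the defining ODE of the holonomy, differentiating with respect to the endpoint parameter $s$. Write $H(s) \coloneqq \Hol^i_s(A)$, so that $H$ solves $H(s)^{-1}\partial_s H(s) = -A_s$ with $H(i) = 1_G$; equivalently $\partial_s H = -H A_s$, and $(\Hol^i_s)^*\overline{\theta} = (\partial_s H) H^{-1}\,ds = -\Ad(H)A_s\,ds$ in the obvious identification. For the first identity, the vector field $\xi^{\#}$ is the infinitesimal generator of the gauge action; using the transformation rule $\Hol^i_s(g\cdot A) = g(i)\Hol^i_s(A)g(s)^{-1}$, I differentiate the one-parameter family $g = e^{-t\xi}$ at $t = 0$ to get the induced variation $\delta H = -\xi(i)H + H\xi(s)$ of the holonomy. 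Then $\iota(\xi^{\#})(\Hol^i_s)^*\overline{\theta}$ equals the Maurer--Cartan form $\overline{\theta}$ evaluated on $\delta H$, i.e. $(\delta H)H^{-1} = -\xi(i) + \Ad(H)\xi(s)$, which is exactly the first claimed formula $\Ad(\Hol^i_s)\xi(s) - \xi(i)$.

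For the second identity, $\eta \in T_A L\mathfrak{g}^* = L\mathfrak{g}^*$ is a tangent vector to the space of connections, and I need the linearization of $A \mapsto \Hol^i_s(A)$ in the direction $\eta$. Differentiating the ODE $\partial_s H = -HA_s$ in the direction $\eta$ gives a linear inhomogeneous ODE for the variation $V(s) \coloneqq \iota(\eta)H$: namely $\partial_s V = -V A_s - H\eta_s$ with $V(i) = 0$ (since $H(i) = 1_G$ is independent of $A$). Solving this by variation of parameters — using $H(s)$ itself as the fundamental solution of the homogeneous part — yields $V(s) = -H(s)\int_i^s H(u)^{-1}H(u)\eta_u H(u)^{-1} H(s)^{-1}\cdot H(s)\,du$; more cleanly, $V(s)H(s)^{-1} = -\int_i^s \Ad(H(u))\eta_u\,du$. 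Since $\iota(\eta)(\Hol^i_s)^*\overline{\theta} = V(s)H(s)^{-1}$, this is precisely the second formula $-\int_i^s du\,\Ad(\Hol^i_u)\eta$.

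I expect the main obstacle to be bookkeeping the sign conventions and the left-versus-right trivializations consistently: the Maurer--Cartan conventions, the sign in the defining ODE for $\Hol$, and the sign in $\xi^{\#}|_p = \frac{d}{dt}(e^{-t\xi}\cdot p)|_{t=0}$ all interact, and a single misplaced sign propagates into Proposition \ref{formula on loop group}. A secondary point requiring a word of care is the functional-analytic one — that the relevant solutions exist and depend smoothly on $A$ in the stated Sobolev class $\lambda > 1/2$ — but this is standard for holonomy of connections on $S^1_m$ and can be invoked rather than reproved. Once the two displayed variations $\delta H$ and $V(s)$ are correctly identified, both formulas follow by a one-line evaluation of $\overline{\theta}$, and the equivariance and closedness statements of Proposition \ref{formula on loop group} are obtained exactly as in \cite[Proposition 8.1]{AMM} by substituting these expressions into the integral defining $\varpi$.
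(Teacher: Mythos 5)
Your proposal is correct and follows essentially the same route as the paper: the first identity is obtained by differentiating the gauge-transformation rule $\Hol^i_s(g\cdot A)=g(i)\Hol^i_s(A)g(s)^{-1}$ along $g=e^{-t\xi}$, and the second by differentiating the defining ODE of the holonomy in the direction $\eta$ and integrating the resulting relation $\partial_s\bigl(V(s)H(s)^{-1}\bigr)=-\Ad(H(s))\eta$, which is exactly the paper's computation with $\phi_s(t\eta)=\Hol^i_s(A+t\eta)\Hol^i_s(A)^{-1}$ phrased as variation of parameters. The only blemish is the garbled intermediate expression for $V(s)$ before you state the clean form $V(s)H(s)^{-1}=-\int_i^s\Ad(H(u))\eta_u\,du$, but the final formula and signs are right.
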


\begin{proof}
The proof is almost same as in \cite[Appendix.A]{AMM}.

For any $A \in L\mathfrak{g}^{*} $ and $\xi \in \Lie(LG) = L\mathfrak{g}$

\begin{align*}
   d_A \Hol^{i}_s(\xi) &= \left.\frac{d}{dt} \right|_{t=0} \Hol^{i}_s(\exp(-t\xi)\cdot A) \\
   &= \left.\frac{d}{dt} \right|_{t=0} \exp(-t\xi(i)) \Hol^{i}_s(A) \exp(t\xi(s)) \\
   &= \Hol^{i}_s(A) \xi(s) - \xi(i) \Hol^{i}_s(A).
\end{align*}

Therefore, we obtain the first equation.

Fix $\eta \in L\mathfrak{g}^*$, we denote

\[\Hol^{i}_s(A+t\eta) = \phi_s(t\eta)\Hol^i_s(A).\]

Then, we have

\[\iota(\eta) {\Hol^{i}_s}^*\overline{\theta} = (d_A\Hol^i_s(\eta))\Hol^i_s(A)^{-1} = \left.\frac{\partial}{\partial t}\right|_{t=0}\phi_s(t\eta). \]

Differentiating the following identity

\[ \Hol^i_s(A+t\eta)^{-1}\frac{\partial}{\partial s}(\phi_s(t\eta)\Hol^i_s(A))= -(A+t\eta)\]

with respect to $t$ at $t=0$ leads to

\[\Ad(\Hol^i_s(A)^{-1})\frac{\partial}{\partial s}\iota(\eta){\Hol^i_s}^*\overline{\theta} = -\eta.\]

By integrating from $i$ to $s$, we obtain 
\[ \iota(\eta) {\Hol^{i}_s}^*\overline{\theta} = - \int_{i}^{s}du \Ad (\Hol^{i}_u)\eta.\]

\end{proof}

\begin{proof}[Proof of the Proposition \ref{formula on loop group}]
  
  The $LG$-invariance of $\varpi$ follows immediately from the $\Ad$-invariance of $(\ ,\ )$ on $\mathfrak{g}$ and the following equation.

  \[g^{*}{\Hol^{i}_s}^{*}\overline{\theta} = \Ad(g(i)) {\Hol^{i}_s}^{*}\overline{\theta} \quad (g \in LG).\]

  Next we will show $d\varpi = -\Hol^*(\theta,[\theta,\theta])$.
  This can be caluculated as follows.

  \begin{align*}
    d\varpi &= \frac{1}{2} \sum_{i=1}^{m} d \int_{i}^{i+1} ds ({\Hol^i_s}^*\overline{\theta},\frac{\partial}{\partial s} {\Hol^i_s}^*\overline{\theta}) \\
    &=\frac{1}{4} \sum_{i=1}^{m} \int_{i}^{i+1}ds({\Hol^i_s}^*[\overline{\theta},\overline{\theta}], \frac{\partial}{\partial s} {\Hol^i_s}^* \overline{\theta}) \\
    &\ \ \ \ - \frac{1}{4} \sum_{i=1}^{m} \int_{i}^{i+1} ds ({\Hol^i_s}^*\overline{\theta},\frac{\partial}{\partial s}{\Hol^i_s}^*[\overline{\theta},\overline{\theta}] ) \\
    &=\frac{1}{2}\sum_{i=1}^{m} \int_{i}^{i+1} ds ({\Hol^i_s}^*[\overline{\theta},\overline{\theta}], \frac{\partial}{\partial s} {\Hol^i_s}^* \overline{\theta})-\frac{1}{4} \sum_{i=1}^{m} {\Hol^i_{i+1}}(\overline{\theta},[\overline{\theta},\overline{\theta}]) \\
    &= \frac{1}{6} \sum_{i=1}^{m} {\Hol^i_{i+1}}^*(\overline{\theta},[\overline{\theta,\overline{\theta}}]) - \frac{1}{4} \sum_{i=1}^{m} {\Hol^i_{i+1}}^*(\overline{\theta},[\overline{\theta,\overline{\theta}}])\\
    &=-\frac{1}{12}\Hol^*(\theta,[\theta,\theta]).
  \end{align*}

  For any $\xi \in L\mathfrak{g}$, we have the following,

  \begin{align*}
    \iota(\xi^{\#})\varpi_A &= \frac{1}{2} \sum_{i=1}^{m} \int_{i}^{i+1}ds (\iota(\xi^{\#}) {\Hol^i_s}^*\overline{\theta}, \frac{\partial}{ \partial s} {\Hol^i_s}^*\overline{\theta})\\
    &\ \ \ \ -\frac{1}{2} \sum_{i=1}^{m} \int_{i}^{i+1} ds ({\Hol^i_s}^*\overline{\theta},\frac{\partial}{\partial s} \iota(\xi^{\#}){\Hol^i_s}^*\overline{\theta}) \\
    &= \sum_{i=1}^{m} \int_{i}^{i+1} ds (\iota(\xi^{\#}) {\Hol^i_s}^*\overline{\theta}, \frac{\partial}{ \partial s} {\Hol^i_s}^*\overline{\theta})\\
    &\ \ \ \ -\frac{1}{2}\sum_{i=1}^{m} ({\Hol^i_{i+1}}^*\overline{\theta},\Ad(\Hol^i_{i+1})\xi(i+1)-\xi(i))\\
    &= \sum_{i=1}^{m} \int_{i}^{i+1} ds (\Ad(\Hol^i_s(A))\xi(s)-\xi(i),\frac{\partial}{\partial s} {\Hol^i_s}^*\overline{\theta})\\
    &\ \ \ \ -\frac{1}{2}\sum_{i=1}^{m} ({\Hol^i_{i+1}}^*\theta,\xi(i+1))+\frac{1}{2}\sum_{i=1}^{m}({\Hol^i_{i+1}}^*\overline{\theta},\xi(i)) \\
    &= \sum_{i=1}^{m} \int_{i}^{i+1} ds (\xi(s), \Ad(\Hol^i_s(A)^{-1})\frac{\partial}{\partial s} {\Hol^i_s}^*\overline{\theta}) \\
    &\ \ \ \ -\frac{1}{2}\sum_{i=1}^{m} ({\Hol^i_{i+1}}^*\theta,\xi(i+1))-\frac{1}{2}\sum_{i=1}^{m}({\Hol^i_{i+1}}^*\overline{\theta},\xi(i)).
  \end{align*}

  We complete this proof by the following calcurus. For any $\eta \in L\mathfrak{g}^*$, 

  \begin{align*}
    & \iota(\eta)\sum_{i=1}^{m} \int_{i}^{i+1} ds (\xi(s), \Ad(\Hol^i_s(A)^{-1})\frac{\partial}{\partial s} {\Hol^i_s}^*\overline{\theta})\\
    &=\sum_{i=1}^{m} \int_{i}^{i+1} ds (\xi(s), -\eta(s)) \\
    &= -\iota(\eta) d \oint (A,\xi).
  \end{align*}

\end{proof}

\subsection{$\Gamma$-compatibilities of Hamiltonian $LG$-spaces}

In this subsection, we state the definition of a $\Gamma$-compatibility on Hamiltonian $LG$-spaces.
This is the usual Hamiltonian space version of a $\Gamma$-compatibility.

At first, we recall the definition of Hamiltonian $LG$-space(\cite{AMM}).

\begin{defi}
  A Hamiltonian $LG$-space is a Banach-manifold $N$ together with an $LG$-action, an invariant 2-form $\sigma \in \Omega^2(N)^{LG}$, and an equivariant map $\Phi \in C^{\infty}(N, L\mathfrak{g}^*)^{LG}$ such that:
  \begin{itemize}
    \item [(1)] The 2-form $\sigma$ is closed.
    \item [(2)] The map $\Phi$ is a moment map for the $LG$-action:
                 \[\iota(\xi^{\#})\sigma= -d \oint_{S^1}(m, \xi).\]
    \item [(3)] The 2-form $\sigma$ is weakly non-degenerate, that is, the induced map $\sigma^{\flat}_x \colon T_xN \to T^{*}_xN$ is injective.
  \end{itemize}
\end{defi}

\begin{defi}

  Let $(N,\sigma,\Phi)$ be a Hamiltonian $LG$-space and we fix a $\Gamma$-action on $N$.
  The tuple $(N,\sigma,\Phi)$ together with the $\Gamma$-action is called a $\Gamma$-compatible Hamiltonian $LG$-space if it satisfies the following:
  
  \begin{itemize}
    \item[(1)] $l \cdot (g \cdot x) = (l \cdot g) \cdot (l \cdot x) \ \ (l \in \Gamma,\ g \in LG,\ x \in N), $
    \item[(2)] $\sigma$ is $\Gamma$-invariant, 
    \item[(3)] $\Phi \colon N \to L\mathfrak{g}^*$ is $\Gamma$-equivariant.
  \end{itemize}
\end{defi}

\subsection{Fixed points of a $\Gamma$-compatible space}

Let $(N,\sigma,\Phi)$ be a $\Gamma$-compatible Hamiltonian $LG$-space.
Then, we can get a Hamiltonian $L^{(\kappa)}G$-space from $\Gamma$-compatible Hamiltonian $LG$-spaces by taking fixed point objects.
Now $\Gamma$-fixed objects are defined by

\begin{itemize}
  \item $N^{\Gamma} \coloneqq \set{x \in N}{ l \cdot x = x ,\ l \in \Gamma},$
  \item $\sigma^{\Gamma} \coloneqq \sigma|_{N^{\Gamma}} ,$
  \item $\Phi ^{\Gamma} \colon N^{\Gamma} \to L\mathfrak{g}^{*} , x \mapsto \Phi(x) .$
\end{itemize}
In this paper, we will denote $\Gamma$-fixed objects as above. For example, $\Gamma$-fixed subgroup of $LG$ is denoted by $LG^{\Gamma}$.
Each connected component of $N^{\Gamma}$ is a Banach submanifold of $N$.
By concidering left transformation of $LG^{\Gamma}$, 
the subgroup $LG^{\Gamma}$ is a Banach Lie subgroup of $LG$(see Appendix 3).

Then, we get the proposition.

\begin{prop}
  For any connected component $L$ of $N^{\Gamma}$, the tuple
  $(L, \sigma^{\Gamma}|_L, \Phi^{\Gamma}|_L)$ is a Hamiltonian $LG^{\Gamma}$-space, i.e the following propaties hold;

  \begin{itemize}
    \item $LG^{\Gamma}$ acts on $L$,
    \item $\sigma^{\Gamma}|_L$ is $LG^{\Gamma}$-invariant symplectic form on $L$,
    \item $\Phi^{\Gamma}|_L$ is $LG^{\Gamma}$-equivariant and $\sigma^{\Gamma}|_L, \Phi^{\Gamma}|_L$ satisfy the moment map condition;
     \[\iota(\xi^{\#})\sigma^{\Gamma} = - d\oint (\Phi^{\Gamma},\xi) \quad (\xi \in L\mathfrak{g}^{\Gamma}).\]  
  \end{itemize}

\end{prop}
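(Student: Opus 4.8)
The plan is to mirror, in the loop-group (Banach) setting, the argument already given for the compact case in the proof of Theorem \ref{fixed_points_q-Ham}, with the extra point that here the $LG$-invariant $2$-form $\sigma$ is genuinely symplectic (weakly non-degenerate), so I must verify weak non-degeneracy of $\sigma^{\Gamma}|_L$ rather than the weaker degeneracy condition (QH3). First I would record the structural facts that make everything go through: $LG^{\Gamma}$ acts on $N^{\Gamma}$ because the $\Gamma$-action on $N$ satisfies $l\cdot(g\cdot x)=(l\cdot g)\cdot(l\cdot x)$, so a $\Gamma$-fixed $g$ sends a $\Gamma$-fixed $x$ to a $\Gamma$-fixed point; each connected component $L$ is a Banach submanifold of $N$ (as stated in the excerpt) with $T_xL=(T_xN)^{\Gamma}$; and $LG^{\Gamma}$ is a Banach Lie subgroup of $LG$ (Appendix~3) with Lie algebra $L\mathfrak{g}^{\Gamma}$. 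Since $L$ is a connected component of $N^{\Gamma}$ it is $LG^{\Gamma}$-invariant.

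Next I would check closedness and the moment map condition. Closedness of $\sigma^{\Gamma}|_L=\iota_L^*\sigma$ is immediate because pullback commutes with $d$ and $\sigma$ is closed. For the moment map condition, fix $\xi\in L\mathfrak{g}^{\Gamma}$; the fundamental vector field $\xi^{\#}$ on $N$ is tangent to $N^{\Gamma}$ (its flow $e^{-t\xi}$ lies in $LG^{\Gamma}$ and preserves $N^{\Gamma}$), so $\iota_L^*(\iota(\xi^{\#})\sigma)=\iota(\xi^{\#})(\sigma^{\Gamma}|_L)$. Pulling back the identity $\iota(\xi^{\#})\sigma=-d\oint(\Phi,\xi)$ and using $\Phi^{\Gamma}=\Phi\circ\iota_L$ gives $\iota(\xi^{\#})\sigma^{\Gamma}=-d\oint(\Phi^{\Gamma},\xi)$; here one uses that the pairing $\oint(\,\cdot\,,\xi)$ restricted to $\Phi$-values still makes sense because $\xi\in L\mathfrak{g}^{\Gamma}$ and $\Phi$ lands in $L\mathfrak{g}^*$, with the natural pairing $L\mathfrak{g}^*\times L\mathfrak{g}\to\bR$ being $\Gamma$-invariant by the assumption that the $\Gamma$-action preserves the inner product on $\mathfrak{g}$. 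Equivariance of $\Phi^{\Gamma}$ for $LG^{\Gamma}$ is just the restriction of the $LG$-equivariance of $\Phi$.

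The main obstacle is weak non-degeneracy of $\sigma^{\Gamma}|_L$, and this is exactly where the averaging argument of Theorem \ref{fixed_points_q-Ham} must be adapted. Suppose $x\in L$ and $X\in T_xL=(T_xN)^{\Gamma}$ satisfies $\iota(X)\sigma^{\Gamma}|_L=0$, i.e. $\sigma(X,Y)=0$ for all $Y\in (T_xN)^{\Gamma}$. For an arbitrary $Y\in T_xN$, form the average $\bar Y:=\frac{1}{|\Gamma|}\sum_{l\in\Gamma}l_*Y\in (T_xN)^{\Gamma}$; then using $\Gamma$-invariance of $\sigma$ and $\Gamma$-invariance of $X$,
\[
\sigma(X,Y)=\frac{1}{|\Gamma|}\sum_{l\in\Gamma}(l^*\sigma)(X,Y)=\frac{1}{|\Gamma|}\sum_{l\in\Gamma}\sigma(l_*X,l_*Y)=\sigma\Bigl(X,\tfrac{1}{|\Gamma|}\textstyle\sum_{l\in\Gamma}l_*Y\Bigr)=\sigma(X,\bar Y)=0.
\]
Hence $\iota(X)\sigma=0$ in $T^*_xN$, and weak non-degeneracy of $\sigma$ forces $X=0$. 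One subtlety to address carefully is that weak non-degeneracy is phrased via injectivity of $\sigma^\flat_x\colon T_xN\to T_x^*N$; the argument above shows $\sigma^\flat_x(X)=0$, so the conclusion $X=0$ is exactly injectivity of $\sigma^\flat$ restricted to $T_xL$, which is what is needed for $\sigma^{\Gamma}|_L$ to be symplectic. Finally, since this is the usual (untwisted) Hamiltonian $LG$-setting, no bitorsor or twisting subtleties arise on the moment-map side — the target of $\Phi^{\Gamma}$ is still the genuine vector space $L\mathfrak{g}^*$ — so assembling the three bullet points completes the proof.
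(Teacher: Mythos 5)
Your proposal is correct and follows essentially the same route as the paper: pull back the moment map identity along the inclusion of $L$, and establish weak non-degeneracy of $\sigma^{\Gamma}|_L$ by averaging an arbitrary $Y\in T_xN$ over $\Gamma$ and using the $\Gamma$-invariance of $\sigma$ together with the weak non-degeneracy of $\sigma$ on $N$. The only cosmetic difference is that the paper justifies the $LG^{\Gamma}$-invariance of the component $L$ via the connectedness of $LG^{\Gamma}$ (coming from $G$ being connected and simply connected), a point you assert slightly more loosely, but the substance of the argument is identical.
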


\begin{proof}
  Since $G$ is connected, the space $LG^{\Gamma}$ is connected.
  Therefore $LG^{\Gamma}$ acts on the connected component $L$.
  The moment map condition of $\Phi^{\Gamma}|_L$ is straightforward calculation and
  the weak non-degeneracy of $\sigma^{\Gamma}$ is proved by averaging for $\Gamma$.

  Let $\iota \colon N^{\Gamma} \to N$ be the inclusion map.
  For any $\xi \in L\mathfrak{g}^{\Gamma}$

  \begin{align*}
    \iota(\xi^{\#})\sigma^{\Gamma}|_L &= (\iota(\xi^{\#})\sigma)|_{L} \\
    &= \left. -d \oint (\Phi,\xi) \right|_{L} \\
    &= - \oint (\Phi^{\Gamma},\xi ).
  \end{align*}

  Next we will show the non-degeneracy of $\sigma^{\Gamma}$.
  Fix for any element $x \in N^{\Gamma}$ and $X \in \ker\sigma^{\Gamma}$.
  Then for any $Y \in T_xN$

  \begin{align*}
    \iota(Y)\iota(X) \sigma &= \sigma(X,Y) \\
    &= \frac{1}{|\Gamma|}\sum_{l \in \Gamma} l^*\sigma(X,Y) \\
    &= \sigma\left( X,\ \frac{1}{|\Gamma|} \sum_{l \in \Gamma}l_*Y \right) \\
    &= 0.
  \end{align*}

  Thus we have proved the non-degeneracy condition.

\end{proof}

We can identify $LG^{\Gamma}$ to twisted loop group $L^{(\kappa)}G$
\begin{align}
  LG^{\Gamma} &= \set{g \in LG}{ l\cdot g = g,\ l \in \Gamma}, \notag \\ 
  &= \set{g \in LG}{ \kappa g(t) =g(t+1),\ for\ all\ t \in S^1_m}, \notag \\
  &= L^{(\kappa)}G. \notag
\end{align}

For any $\xi \in L^{(\kappa)}\mathfrak{g} = L\mathfrak{g}^{\Gamma}$
\begin{align*}
  \iota(\xi^{\#})\sigma^{\Gamma} &= -d \oint (\Phi^{\Gamma}, \xi) \\
   &= -d \sum_{i=1}^{m} \int_{i}^{i+1} (\Phi^{\Gamma}, \xi) \\
   &= -md \int_{0}^{1} (\Phi,\xi).
\end{align*}

Therefore, $(L, \frac{1}{m}\sigma^{\Gamma}, \Phi^{(\kappa)})$ become a Hamiltonian $L^{(\kappa)}G$-spaces.

\subsection{Equivalence theorem}
It is known that there exist a natural correspondence between (twisted) Hamiltonian $LG$-spaces and (twisted) quasi-Hamiltonian $G$-spaces 
which is called equivalence theorem (\cite{AMM}, \cite{Meinrenken2017}).
Taking base points $\beta$ with $\beta = 0 \cdot \Gamma \subset S^{1}_m$, the equivalence theorem between $\Gamma$-compatible Hamiltonian $LG$-spaces and $\Gamma$-compatible quasi-Hamiltonian $G^{\beta}$-spaces can be shown.
Let $\Omega^{\beta}G$ be a baced loop group for $\beta $
\[\Omega ^{\beta}G = \set{g \in LG}{ g|_{\beta} = 1_G}.\]

\begin{theo}
Taking the holonomy manifold for $\Omega^{\beta}G$ yields a bijective correspondence between

\begin{itemize}
  \item the isomorphism class of $\Gamma$-compatible Hamiltonian $LG$ spaces $(N,\sigma,\Phi)$ with a proper moment map,
  \item the isomorphism class of $\Gamma$-compatible q-Hamiltonian $G^{\beta}$ spaces $(M,\omega,\mu).$
\end{itemize}

\end{theo}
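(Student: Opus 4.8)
The plan is to deduce the statement from the non-equivariant equivalence theorem for the group $G^{\beta}$ by transporting the $\Gamma$-actions along the two mutually inverse functors, after checking that every piece of data entering those functors is itself $\Gamma$-compatible.

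Recall the two constructions. In one direction, given a Hamiltonian $LG$-space $(N,\sigma,\Phi)$ with proper moment map, the based loop group $\Omega^{\beta}G$ acts freely and properly on $N$, and the \emph{holonomy manifold} is the reduced space $M \coloneqq N/\Omega^{\beta}G$, on which $G^{\beta} = LG/\Omega^{\beta}G$ acts; the composite $\Hol\circ\Phi\colon N \to \bH$ is $\Omega^{\beta}G$-invariant and descends to a group-valued moment map $\mu$, while $\sigma$ together with the pullback of $\varpi$ descends to the quasi-Hamiltonian $2$-form $\omega$. In the other direction, given a quasi-Hamiltonian $G^{\beta}$-space $(M,\omega,\mu)$ one forms the loop group extension
\[N \coloneqq M\times_{\bH}L\mathfrak{g}^{*} = \set{(p,A)\in M\times L\mathfrak{g}^{*}}{\mu(p)=\Hol(A)},\]
with $LG$ acting by $g\cdot(p,A) = (g|_{\beta}\cdot p,\ g\cdot A)$, with $\Phi = \pr_{2}$, and with $\sigma = \pr_{1}^{*}\omega + \pr_{2}^{*}\varpi$ (closed on $N$ since $\mu\circ\pr_{1} = \Hol\circ\pr_{2}$ there, by the defining identities of $\omega$ and of $\varpi$ in Proposition~\ref{formula on loop group}). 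That these functors are mutually inverse and produce the claimed structures is the equivalence theorem for $G^{\beta}$; since the $\Gamma$-action genuinely mixes the $m$ factors of $G^{\beta}$, I would first record this multi--base-point statement, which follows from the single--base-point version of \cite{AMM} and \cite{Meinrenken2017} by fusion over $\beta$.

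Next I would verify the following $\Gamma$-compatibilities of the data:
\begin{enumerate}
  \item[(a)] $\Omega^{\beta}G\subset LG$ is $\Gamma$-stable, since $\Gamma$ permutes $\beta$; consequently a $\Gamma$-action on $N$ descends to $M = N/\Omega^{\beta}G$, and dually the diagonal $\Gamma$-action on $M\times L\mathfrak{g}^{*}$ restricts to $N = M\times_{\bH}L\mathfrak{g}^{*}$ as soon as $\mu$ and $\Hol$ are $\Gamma$-equivariant;
  \item[(b)] the holonomy map $\Hol\colon L\mathfrak{g}^{*}\to\bH$ is $\Gamma$-equivariant for the $\Gamma$-action on $L\mathfrak{g}^{*}$ and the $\Gamma$-compatible $G^{\beta}$-bitorsor structure on $\bH$ of Example~\ref{exam_bitorsor}, i.e. $\Hol(l\cdot A)_{i} = \kappa^{l}\,\Hol(A)_{i-l}$;
  \item[(c)] the gauge action of $LG$ on $L\mathfrak{g}^{*}$ is $\Gamma$-compatible, $l\cdot(g\cdot A) = (l\cdot g)\cdot(l\cdot A)$, which is the naturality of gauge transformations under the reparametrization $t\mapsto t\cdot l$ combined with the Lie algebra automorphism $\kappa^{l}$;
  \item[(d)] the $2$-form $\varpi$ is $\Gamma$-invariant; this is where the hypotheses that $\kappa$ and $\Ad$ preserve $(\ ,\ )$ and that $\varpi$ is summed over all $i\in\{1,\dots,m\}$ enter, since the $\Gamma$-action cyclically relabels the intervals $[i,i+1]$ and conjugates the partial holonomies by $\kappa^{l}$, and the inner product absorbs both effects.
\end{enumerate}

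Granting (a)--(d), the verification becomes formal. If $(N,\sigma,\Phi)$ is $\Gamma$-compatible, then on $M = N/\Omega^{\beta}G$: condition (1) of Definition~\ref{def_compati_q-Ham} descends from the corresponding condition on $N$ (using that $G^{\beta}$ is the quotient group $LG/\Omega^{\beta}G$); $\omega$ is $\Gamma$-invariant because $\sigma$ is and $\Phi^{*}\varpi$ is, by (d) and $\Gamma$-equivariance of $\Phi$; and $\mu$ is $\Gamma$-equivariant by (b) and $\Gamma$-equivariance of $\Phi$. Symmetrically, if $(M,\omega,\mu)$ is $\Gamma$-compatible, then on $N = M\times_{\bH}L\mathfrak{g}^{*}$ with the diagonal $\Gamma$-action: condition (1) follows from condition (1) on $M$ together with (c); $\sigma = \pr_{1}^{*}\omega + \pr_{2}^{*}\varpi$ is $\Gamma$-invariant by $\Gamma$-invariance of $\omega$ and of $\varpi$; and $\Phi = \pr_{2}$ is tautologically $\Gamma$-equivariant. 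Since the underlying non-equivariant functors are mutually inverse and the $\Gamma$-actions are merely carried along, the two assignments are mutually inverse on isomorphism classes of $\Gamma$-compatible objects, and morphisms are treated identically. I expect the main obstacle to be item (d) (with (b) a close second): establishing $\Gamma$-invariance of $\varpi$ requires carefully tracking how the $\kappa$-twist built into the $\Gamma$-action on $LG$ interacts with the cyclic relabelling $i\mapsto i\cdot l$ of the intervals and with the identity $g^{*}{\Hol^{i}_{s}}^{*}\overline{\theta} = \Ad(g(i))\,{\Hol^{i}_{s}}^{*}\overline{\theta}$; once this bookkeeping is in place, $\Ad$- and $\kappa$-invariance of $(\ ,\ )$ close the argument. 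A secondary point needing care, as noted above, is that the equivalence theorem for the multi--base-point group $G^{\beta}$ must first be recorded, since the $\Gamma$-action does not respect the individual factors of $G^{\beta}$.
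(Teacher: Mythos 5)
Your proposal is correct and follows exactly the route the paper intends: the paper itself states this theorem without any written proof, relying on the holonomy-manifold construction of \cite{AMM} and \cite{Meinrenken2017} and leaving the transport of the $\Gamma$-structures implicit, which is precisely what your items (a)--(d) supply (and your equivariance formula $\Hol(l\cdot A)_i=\kappa^l\Hol(A)_{i-l}$ correctly matches the bitorsor action of Example \ref{exam_bitorsor}). The only point to watch is the sign convention relating $\omega$, $\sigma$ and $\varpi$ (the paper uses $\tilde\omega=\sigma-\Phi^*\varpi$ in the analogous moduli-space argument), but this does not affect the $\Gamma$-compatibility checks.
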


\section{Twisted connections and finite group actions}

\subsection{Preliminaries}
Let $X$ be a compact connected 2-manifold with boundary components $V_0, V_1, \dots , V_{r}$ and $P \coloneqq X \times G$ a trivial bundle on $X$.
Fix $\lambda > 1$ and let $\Omega^{j}(X,\mathfrak{g})$ be $\mathfrak{g}$-valued j- forms of Sobolev class $\lambda-j$.
Then the gauge $\mathcal{G}(X) \coloneqq \Map(X,G)$ is a Banach Lie group modeled on $\Lie(\mathcal{G}(X)) = \Omega^{0}(X,\mathfrak{g})$.
Let $\mathcal{A}(X) \coloneqq \Omega^{1}(X,\mathfrak{g})$ be the space of flat connections on the trivial bundle on $X$.
$\mathcal{A}(X)$ has smooth $\mathcal{G}(X)$-action given by 
\[ g\cdot A = g A g^{-1} + g^{*}\overline{\theta}\]
The space $\mathcal{A}(X)$ has a canonical symplectic form called Atiyah-Bott 2-form
\[ \sigma = \frac{1}{2} \int_{X} (\delta A\wedge \delta A )\]

for which $\mathcal{G}(X)$-action is Hamiltonian with moment map

\[\langle \Psi(A), \xi\rangle = \int_{X} (\curv A,\xi) - \int _{\partial X} (A,\xi) \quad (\xi \in \Omega^{0}(X,\mathfrak{g}))\]

where $\curv(A) $ is a curvature of $A$.
The space of flat connections is denoted by $\mathcal{A}_{flat}(X)$.

\subsection{Finite group actions}
Let $\Gamma$ be a finite group.
Fix a free $\Gamma$-(right)action on X which preserve the orientation on $X$ and left $\Gamma$-action on G which preserve inner product on $\mathfrak{g}$.
Then $\Gamma$-(right)action on the trivial bundle is defined by
\[(x,a) \cdot \varphi = (x\cdot \varphi, \varphi ^{-1} \cdot a) \quad ((x,a) \in P,\ \varphi \in \Gamma).\]

This action satisfies $\Gamma$ compatibility
\[ (p\cdot g)\cdot \varphi = (p \cdot \varphi)\cdot (g \cdot \varphi) \quad (p \in P ,\ g \in G,\ \varphi \in \Gamma)\]
where $g\cdot \varphi = \varphi^{-1} \cdot g.$
We can also define $\Gamma$-(left)actions on $\Omega^{j}(P,\mathfrak{g})$ by
\[\varphi \cdot \alpha = \varphi \cdot \alpha \circ d\varphi  \quad (\alpha \in \Omega^{j}(P,\mathfrak{g}),\ \varphi \in \Gamma),\]
and $\Gamma$-(left)actions on $\Omega^{j}(X,\mathfrak{g})$ can be defined by the same way.
Following propaties can be shown by direct calcurus.

\begin{lemma}\label{lemm of compati for conn}
  \begin{gather*}
    \mathcal{A}(X) \simeq \mathcal{A}(P)\ \text{are}\  \Gamma\text{-invariant}, \\
    \curv \colon \mathcal{A}(X) \to \Omega^{2}(X,\mathfrak{g}) \ \text{is} \ \Gamma \text{-equivariant}, \\
    \text{The canonical 2-form}\  \sigma\ \text{is}\ \Gamma \text{-invariant}, \\
    \varphi \cdot (g \cdot A) = (\varphi \cdot g) \cdot (\varphi \cdot A) \quad (\varphi \in \Gamma,\ g \in \mathcal{G}(X),\ A \in \mathcal{A}(X)).
  \end{gather*}
\end{lemma}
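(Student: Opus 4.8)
The plan is to verify each of the four assertions by direct computation, treating them in the order listed since the later ones build on the identifications set up for the earlier ones. First I would make precise the identification $\mathcal{A}(X) \simeq \mathcal{A}(P)$: a connection on the trivial bundle $P = X \times G$ corresponds to a $\mathfrak{g}$-valued $1$-form on $X$ via pullback along the zero section, and under the $\Gamma$-action $(x,a)\cdot\varphi = (x\cdot\varphi, \varphi^{-1}\cdot a)$ on $P$ together with the action $\varphi \cdot \alpha = (\varphi\cdot{}) \circ \alpha \circ d\varphi$ on forms, I would check that this identification intertwines the two $\Gamma$-actions. Concretely, one checks that the induced action on $A \in \Omega^1(X,\mathfrak{g})$ is $(\varphi\cdot A)_x = \varphi\cdot\bigl(A_{x\cdot\varphi}\circ d\varphi\bigr)$, and that this preserves flatness because $d\varphi$ is a diffeomorphism (so pullback commutes with $d$) and $\varphi$ acts as a Lie algebra automorphism (so it commutes with the bracket); hence $\curv(\varphi\cdot A) = \varphi\cdot\curv(A)$, which is simultaneously the second assertion.

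For the $\Gamma$-invariance of the Atiyah--Bott $2$-form $\sigma = \tfrac12\int_X(\delta A \wedge \delta A)$, I would compute $\varphi^*\sigma$ using that $\Gamma$ acts on $X$ by orientation-preserving diffeomorphisms (so $\int_X$ is $\Gamma$-invariant) and that $\varphi$ preserves the inner product on $\mathfrak{g}$ (so $(\delta A \wedge \delta A)$ transforms by the identity under the pointwise linear action on $\mathfrak{g}$); a change of variables $x \mapsto x\cdot\varphi$ then gives $\varphi^*\sigma = \sigma$. Finally, the compatibility $\varphi\cdot(g\cdot A) = (\varphi\cdot g)\cdot(\varphi\cdot A)$ follows from unwinding the gauge action $g\cdot A = gAg^{-1} + g^*\overline{\theta}$: one verifies separately that the conjugation term $gAg^{-1}$ and the Maurer--Cartan term $g^*\overline{\theta}$ are each compatible, using that $\varphi$ is an automorphism of $G$ (so it respects both the group multiplication and the Maurer--Cartan form) and that the actions on $\mathcal{G}(X) = \Map(X,G)$ and on $\mathcal{A}(X) = \Omega^1(X,\mathfrak{g})$ are defined by the same reparametrization-plus-coefficient recipe.

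I do not expect any genuine obstacle here; the content is bookkeeping, and the one point that needs care is matching conventions — in particular keeping track of the inverse in $(x,a)\cdot\varphi = (x\cdot\varphi,\varphi^{-1}\cdot a)$ and in $g\cdot\varphi = \varphi^{-1}\cdot g$, so that the right $\Gamma$-action on $P$ induces the stated left $\Gamma$-action on forms without a sign or inversion error, and making sure the Sobolev class $\lambda - j$ is preserved (it is, since $d\varphi$ is a smooth diffeomorphism). Once the identification $\mathcal{A}(X)\simeq\mathcal{A}(P)$ and the induced formula for $\varphi\cdot A$ are pinned down, the remaining three statements are immediate from the invariance properties of $\int_X$, $d$, the bracket, and the inner product.
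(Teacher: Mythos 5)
Your proposal is correct and matches the paper exactly: the paper offers no written proof of this lemma, stating only that the properties ``can be shown by direct calculus,'' and your plan is precisely that direct verification, with the right ingredients in each case (equivariance of the trivialization, commutation of pullback with $d$ and of the $\Gamma$-action with the bracket for $\curv$, orientation-preservation plus invariance of the inner product for $\sigma$, and the automorphism property of $\varphi$ on $G$ and on $\overline{\theta}$ for the gauge compatibility). The only nitpick is terminological: the identification $\mathcal{A}(P)\simeq\mathcal{A}(X)$ is via the identity section $x\mapsto(x,e)$ of the trivial principal bundle, not a ``zero section,'' but this does not affect the argument.
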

 
$\Gamma$-actions on $\mathcal{G}(\partial X)$ and $\Omega^{j}(\partial X, \mathfrak{g})$ is also defined by the same way.

\subsection{The space $\mathcal{M}(X)$ and $\Gamma$-compatibilities}

Since the restriction map $\mathcal{G}(X) \to \mathcal{G}(\partial X)$ is surjective,
the following sequence is exact
\[ 1 \to \mathcal{G}_{\partial X}(X) \to \mathcal{G}(X) \to \mathcal{G}(\partial X) \to 1,\]
where $\mathcal{G}_{\partial X}(X) \coloneqq \set{g \in \mathcal{G}(X)}{g|_{\partial X} =1_G}.$
Then $\mathcal{M}(X) \coloneqq \mathcal{A}_{flat} / \mathcal{G}_{\partial X}(X)$ is a Banach manifold and is a Hamiltonian $\mathcal{G}(\partial X) \simeq LG^{r+1}$-space with proper moment map(\cite{AMM}, \cite{Donaldson1992}).

The symplectic form $\sigma$ on $\mathcal{M}(X)$ is induced from Atiyah--Bott 2-form on $\mathcal{A}(X)$ and the moment map is a restriction
\[\Phi \colon \mathcal{M}(X) \to \Omega^1(\partial X, \mathfrak{g}),\ A \mapsto A|_{\partial X}.\]
The $\Gamma$-compatibility of $\mathcal{M}(X)$ can be shown by using Lemma \ref{lemm of compati for conn}.
A $\Gamma$-action on $\mathcal{M}(X)$ is defined by 
\[ \varphi \cdot [A] \coloneqq [\varphi \cdot A]\ \ \ (\varphi \in \Gamma, [A] \in \mathcal{M}(X)).\]

The Hamiltonian $\mathcal{G}(\partial X)$-space $(\mathcal{M}(X), \omega, \Phi)$ together with the $\Gamma$-action is a $\Gamma$-compatible Hamiltonian $\mathcal{G}(\partial X)$-space.

\begin{prop}
  $(\mathcal{M}(X),\sigma,\Phi)$ is a $\Gamma$-compatible Hamiltonian $\mathcal{G}(\partial X)$-space.
\end{prop}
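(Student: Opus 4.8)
The plan is to verify the three conditions defining a $\Gamma$-compatible Hamiltonian $\mathcal{G}(\partial X)$-space by reducing each of them, via the quotient map $\mathcal{A}_{flat}(X) \to \mathcal{M}(X)$, to the corresponding compatibility statements for $\mathcal{A}(X)$ gathered in Lemma \ref{lemm of compati for conn}. Before that, one must check that the prescription $\varphi \cdot [A] = [\varphi \cdot A]$ really defines a $\Gamma$-action on $\mathcal{M}(X)$. Since $\curv$ is $\Gamma$-equivariant, the subset $\mathcal{A}_{flat}(X)$ is $\Gamma$-invariant; and since the $\Gamma$-action on $X$ preserves the boundary $\partial X$, the induced $\Gamma$-action on $\mathcal{G}(X)$ preserves the normal subgroup $\mathcal{G}_{\partial X}(X)$. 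Combining this with the identity $\varphi \cdot (g \cdot A) = (\varphi \cdot g)\cdot(\varphi \cdot A)$ of Lemma \ref{lemm of compati for conn}, the $\Gamma$-image of a $\mathcal{G}_{\partial X}(X)$-orbit in $\mathcal{A}_{flat}(X)$ is again a single $\mathcal{G}_{\partial X}(X)$-orbit, so $[\varphi \cdot A]$ is independent of the chosen representative $A$.

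Granting this, conditions $(1)$--$(3)$ follow by descent. For $(1)$, given $g \in \mathcal{G}(\partial X)$ I would pick a lift $\tilde g \in \mathcal{G}(X)$; because the restriction map $\mathcal{G}(X) \to \mathcal{G}(\partial X)$ intertwines the two $\Gamma$-actions, $\varphi \cdot \tilde g$ is a lift of $\varphi \cdot g$, and the identity $\varphi \cdot (g \cdot [A]) = (\varphi \cdot g)\cdot(\varphi \cdot [A])$ is obtained by applying Lemma \ref{lemm of compati for conn} to $\tilde g$ and $A$ and passing to the quotient. For $(2)$, the Atiyah--Bott $2$-form on $\mathcal{A}(X)$ is $\Gamma$-invariant by Lemma \ref{lemm of compati for conn}, and the quotient map $\mathcal{A}_{flat}(X) \to \mathcal{M}(X)$ is $\Gamma$-equivariant, so the induced $2$-form $\sigma$ on $\mathcal{M}(X)$ is $\Gamma$-invariant. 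For $(3)$, since $\Phi([A]) = A|_{\partial X}$ and the $\Gamma$-action on $\Omega^1(X,\mathfrak{g})$ restricts to the one on $\Omega^1(\partial X,\mathfrak{g})$ (once more because $\Gamma$ preserves $\partial X$), we get $\Phi(\varphi \cdot [A]) = (\varphi \cdot A)|_{\partial X} = \varphi \cdot (A|_{\partial X}) = \varphi \cdot \Phi([A])$.

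The one step that is not purely formal is the well-definedness of the $\Gamma$-action on the quotient $\mathcal{M}(X)$, i.e.\ the interaction of the finite group with the gauge group $\mathcal{G}_{\partial X}(X)$ that is being divided out; once that is in place, the three conditions are immediate consequences of Lemma \ref{lemm of compati for conn} together with the $\Gamma$-equivariance of the quotient map, and no analytic input beyond what is already used to make $\mathcal{M}(X)$ a Hamiltonian $\mathcal{G}(\partial X)$-space is needed.
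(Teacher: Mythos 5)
Your proposal is correct and follows exactly the route the paper intends: the paper gives no written proof, only the remark that the $\Gamma$-compatibility ``can be shown by using Lemma \ref{lemm of compati for conn}'', and your argument is precisely the descent of that lemma to the quotient $\mathcal{M}(X)=\mathcal{A}_{flat}(X)/\mathcal{G}_{\partial X}(X)$. Your explicit check that the $\Gamma$-action is well defined on the quotient (via $\Gamma$-invariance of $\mathcal{A}_{flat}(X)$ and of $\mathcal{G}_{\partial X}(X)$ together with $\varphi\cdot(g\cdot A)=(\varphi\cdot g)\cdot(\varphi\cdot A)$) is a detail the paper leaves implicit.
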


\subsection{Fixed points of $\mathcal{M}(X)$}

By taking a fixed point objects $(\mathcal{M}(X)^{\Gamma},\omega^{\Gamma},\Phi^{\Gamma})$, we get Hamiltonian spaces for twisted loop groups actions.

We define a $\Gamma$-action on $I \coloneqq \{ 0,1,\dots,r\}$ by using a parmutation of $\partial X = \coprod _{i=0}^{r} V_i ,$
\[ V_{i \cdot \varphi } = V_{i} \cdot \varphi .\]
Let $\Gamma(i) $ be a stabilizer of $i \in I$, i.e. $\Gamma(i) = \set{\varphi \in \Gamma}{ i \cdot \varphi = i}$.

\begin{lemma}
  For any $i \in I$, $\Gamma(i)$ is a cyclic group.
\end{lemma}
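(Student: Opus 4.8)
The plan is to exhibit $\Gamma(i)$ as a subgroup of a group we already know to be cyclic, namely the group of deck transformations of a suitable covering of the circle $V_i/\Gamma(i)$, or equivalently to produce an injective homomorphism from $\Gamma(i)$ into a rotation group of a circle. Concretely: the $\Gamma$-action on $X$ is free and orientation-preserving, so $\Gamma(i)$ acts freely on the boundary component $V_i$, which is a circle. The quotient $V_i/\Gamma(i)$ is again a circle, and $V_i \to V_i/\Gamma(i)$ is a finite covering map with deck group $\Gamma(i)$. First I would recall that every finite group of deck transformations of a connected covering of $S^1$ is cyclic, since such a covering is (up to homeomorphism) the $n$-fold cover $S^1 \to S^1$, $z \mapsto z^n$, whose deck group is $\bZ/n\bZ$. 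This immediately gives $\Gamma(i) \simeq \bZ/n\bZ$ for $n = |\Gamma(i)|$.

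The key steps, in order, are: (1) observe that $\Gamma(i)$ is precisely the subgroup of $\Gamma$ that preserves $V_i$ setwise, hence acts on the manifold $V_i$; (2) show this action is free — this follows from the freeness of the $\Gamma$-action on $X$, restricted to the invariant submanifold $V_i \subset \partial X$; (3) show the action is orientation-preserving on $V_i$ — the $\Gamma$-action preserves the orientation of $X$ and preserves $\partial X$ with its induced boundary orientation, so it preserves the orientation of each invariant boundary circle $V_i$; (4) conclude that a finite group acting freely and orientation-preservingly on a circle is cyclic, by passing to the quotient covering $V_i \to V_i/\Gamma(i) \cong S^1$ and identifying $\Gamma(i)$ with its deck transformation group, which is $\bZ/|\Gamma(i)|\bZ$. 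Alternatively, one can argue directly: a free orientation-preserving homeomorphism of finite order of $S^1$ is conjugate to a rotation by a rational multiple of $2\pi$, so $\Gamma(i)$ embeds into $\mathrm{SO}(2)$ as a finite subgroup, hence is cyclic.

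The main obstacle — really the only nontrivial point — is step (2), verifying that the $\Gamma(i)$-action on $V_i$ is free. One must be slightly careful that "free on $X$" is used in the sense that no nonidentity element of $\Gamma$ has a fixed point anywhere on $X$, including on $\partial X$; once this is granted, freeness on the invariant submanifold $V_i$ is automatic. If instead "free" only meant free on the interior, one would need an additional argument (e.g. collar-neighbourhood considerations) to rule out boundary fixed points, but the hypothesis in the paper states the action on $X$ is free outright, so this is not an issue here. The remaining steps are standard facts about finite group actions on the circle and require no delicate computation.
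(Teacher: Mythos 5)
Your proof is correct, but it reaches the conclusion by a different mechanism than the paper. You pass to the quotient covering $V_i \to V_i/\Gamma(i)$ and identify $\Gamma(i)$ with the deck group of a finite connected covering of a circle, hence with $\mathbb{Z}/n\mathbb{Z}$. The paper instead argues combinatorially on a single orbit: it fixes $b \in V_i$, notes that freeness makes the $\Gamma(i)$-action on the orbit $b\cdot\Gamma(i)$ simply transitive, and uses the cyclic order that the oriented circle induces on this finite set to single out the element $\kappa_i$ with $b_0\cdot\kappa_i = b_1$; since every element of $\Gamma(i)$ preserves this cyclic order, $\Gamma(i)$ embeds into the rotations of the cyclically ordered $m_i$-point set, and simple transitivity forces $\Gamma(i) = \langle\kappa_i\rangle$. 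Both arguments rest on exactly the two facts you isolate --- freeness on $V_i$ and orientation-preservation (the latter is only implicit in the paper's wording) --- but the paper's version produces the explicit generator $\kappa_i$ attached to the chosen cyclic order, which is used immediately afterwards to identify $V_i$ with $S^1_{m_i}$ and $\mathcal{G}(V_i)^{\Gamma(i)}$ with the twisted loop group $L^{(\kappa_i)}G$, whereas your covering-space argument buys a cleaner appeal to standard theory at the cost of extracting the generator afterwards. One small caution on your alternative ending: knowing that each element of finite order is individually conjugate to a rotation does not by itself give a simultaneous embedding of $\Gamma(i)$ into $\mathrm{SO}(2)$; for that you should average a Riemannian metric on $V_i$ over $\Gamma(i)$ so that the whole group acts by orientation-preserving isometries of a round circle. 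Your main deck-group argument does not have this issue.
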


\begin{proof}
  Fix a point $b \in V_i$ and concider a $\Gamma(i)$-orbit $b \cdot \Gamma(i)$, we restrict the $\Gamma(i)$-action on $\partial X$ to $b \cdot \Gamma(i)$.
  Since this $\Gamma$-action on $\partial X$ is free, the restricted $\Gamma(i) $-action on $b \cdot \Gamma(i)$ is simply transitive.
  So by fixing a cyclic order of $b \cdot \Gamma(i) =\{b_0, b_1, \dots b_{m_i-1}\}$, we get an unique element $b_1 = b_0 \cdot \kappa_i$.
  Then, the element $\kappa_i$ is a generator of $\Gamma(i) = \langle \kappa_i \rangle$.
\end{proof}

We fix a generator $\kappa_i$ of $\Gamma(i)$ for all $i \in I$.
We set $m_i \coloneqq \left| \Gamma(i) \right|$, and identify $V_i$ with $S^{1}_{m_i} = \bR / m_i \bZ$ while keeping $\Gamma(i)$-action. 
Then the following isomorphism is obtained

 \[\mathcal{G}(V_i)^{\Gamma(i)} \simeq L^{(\kappa_i)}G.\]
So, by taking complete system of representatives $I^{\prime}$ of $I/\Gamma$, and fixing $\kappa_i$ for any $i \in I$, 
and taking $\tau_{ji} \in \Gamma $ such that $\kappa_j = \tau_{ji} \kappa_i \tau_{ji}^{-1}$ for $i, j \in i\cdot \Gamma$, 
we identify the fixed points subgroup $\mathcal{G}(\partial X)^{\Gamma}$ with a product of twisted loop groups, 
\begin{align*}
  \mathcal{G}(\partial X)^{\Gamma} &\simeq \set{(g_i) \in \underset{i \in I}{\prod} L^{(\kappa_i)}G}{ g_i = \tau_{ij}  g_j} \\
  &\simeq \underset{i \in I^{\prime}}{\prod} L^{(\kappa_i)}G.
\end{align*}

In the same way, we obtain following isomorphisms.

\begin{align*}
  \Omega^{1}(\partial X, \mathfrak{g})^{\Gamma} &\simeq \underset{i \in I^{\prime}}{\prod} L^{(\kappa_i)}\mathfrak{g}^{*}\ ; A \mapsto (A|_{V_i}), \\
  \Omega^{0}(\partial X, \mathfrak{g})^{\Gamma} &\simeq \underset{i \in I^{\prime}}{\prod} L^{(\kappa_i)}\mathfrak{g}\ ; \xi \mapsto (\xi|_{V_i}).
\end{align*}

We define $\Phi^{\prime}$ and $\sigma^{\prime}$ by
\begin{align*}
  \Phi^{\prime} \colon \mathcal{M}(X)^{\Gamma} \to  &\underset{i \in I^{\prime}}{\prod} L^{(\kappa_i)}\mathfrak{g}^{*}\ ; A \mapsto (\Phi(A)|_{V_i}), \\
  &\sigma^{\prime} \coloneqq \frac{1}{\left|\Gamma \right|} \sigma.
\end{align*}

By concidering the $\underset{i \in I}{\prod} L^{(\kappa_i)}G$-action on $\mathcal{M}(X)$, the following proposition holds.

\begin{prop}
  Each connected component $\mathcal{N}$ of $\mathcal{M}(X)^{\Gamma}$ together with the 2-form $\sigma^{\prime}|_{\mathcal{N}}$ and the moment map $\Phi^{\prime}|_{\mathcal{N}}$ is a Hamiltonian $\Pi_{i \in I^{\prime}} L^{(\kappa_i)}G$-space.
\end{prop}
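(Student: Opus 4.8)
The plan is to obtain the statement as an instance of the fixed-point construction for $\Gamma$-compatible Hamiltonian $LG$-spaces established earlier, now applied to the gauge group $\mathcal{G}(\partial X)\cong\prod_{i\in I}LG$, which carries a $\Gamma$-action that permutes and twists its loop-group factors along the orbits of $\Gamma$ on $I$. The only genuinely new point, compared with the case of a single twisted loop group, is keeping track of the normalization constants that appear because $\partial X$ is a disjoint union of circles of possibly different lengths.

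First I would collect the structural facts, which are immediate. By the preceding proposition $(\mathcal{M}(X),\sigma,\Phi)$ is a $\Gamma$-compatible Hamiltonian $\mathcal{G}(\partial X)$-space, so each connected component $\mathcal{N}$ of $\mathcal{M}(X)^{\Gamma}$ is a Banach submanifold of $\mathcal{M}(X)$ (as for $N^{\Gamma}$ above), and the fixed subgroup $\mathcal{G}(\partial X)^{\Gamma}$ acts on $\mathcal{M}(X)^{\Gamma}$ because $\Gamma$-compatibility gives $\varphi\cdot(g\cdot x)=(\varphi\cdot g)\cdot(\varphi\cdot x)=g\cdot x$ whenever $g$ and $x$ are $\Gamma$-fixed. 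Since $G$ is connected each $L^{(\kappa_i)}G$ is connected, hence so is $\mathcal{G}(\partial X)^{\Gamma}\cong\prod_{i\in I^{\prime}}L^{(\kappa_i)}G$, and this group therefore preserves each component $\mathcal{N}$. The form $\sigma^{\prime}|_{\mathcal{N}}=\tfrac{1}{|\Gamma|}\sigma|_{\mathcal{N}}$ is closed (restriction of a closed form) and $\prod_{i\in I^{\prime}}L^{(\kappa_i)}G$-invariant, $\Phi^{\prime}|_{\mathcal{N}}$ is equivariant for the same reason, and by the identification $\Omega^{1}(\partial X,\mathfrak{g})^{\Gamma}\cong\prod_{i\in I^{\prime}}L^{(\kappa_i)}\mathfrak{g}^{*}$ it does take values there. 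Weak non-degeneracy of $\sigma^{\prime}|_{\mathcal{N}}$ is the averaging argument used in the Hamiltonian $LG$-case: if $X\in\ker(\sigma^{\prime}|_{\mathcal{N}})_{x}$, then for every $Y\in T_{x}\mathcal{M}(X)$,
\[
\sigma(X,Y)=\frac{1}{|\Gamma|}\sum_{\varphi\in\Gamma}\varphi^{*}\sigma(X,Y)=\sigma\Bigl(X,\ \frac{1}{|\Gamma|}\sum_{\varphi\in\Gamma}\varphi_{*}Y\Bigr)=0,
\]
so $X=0$.

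The core of the proof is the moment-map identity with the correct constant. For $\xi\in\Omega^{0}(\partial X,\mathfrak{g})^{\Gamma}$, which corresponds to $(\xi|_{V_i})_{i\in I^{\prime}}\in\prod_{i\in I^{\prime}}L^{(\kappa_i)}\mathfrak{g}$, I would restrict the moment-map equation $\iota(\xi^{\#})\sigma=-d\int_{\partial X}(\Phi,\xi)$ of $\mathcal{M}(X)$ to $\mathcal{N}$ and rewrite its right-hand side. Decomposing $\partial X=\coprod_{i\in I}V_i$ and using that every $\varphi\in\Gamma$ restricts to an orientation-preserving diffeomorphism $V_i\to V_{i\cdot\varphi}$ intertwining $\Phi$ and $\xi$ and preserving $(\ ,\ )$, one gets $\int_{V_{i\cdot\varphi}}(\Phi,\xi)=\int_{V_i}(\Phi,\xi)$; summing over the orbit of a representative $i\in I^{\prime}$, which has $|\Gamma|/m_i$ elements, gives $\int_{\partial X}(\Phi,\xi)=\sum_{i\in I^{\prime}}\tfrac{|\Gamma|}{m_i}\int_{V_i}(\Phi,\xi)$. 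Since $\kappa_i$ preserves $(\ ,\ )$, the real $1$-form $(\Phi|_{V_i},\xi|_{V_i})$ is invariant under $t\mapsto t+1$ on $V_i\cong S^{1}_{m_i}$, so $\int_{V_i}(\Phi,\xi)=m_i\int_{0}^{1}(\Phi^{\prime}_{i},\xi_{i})$. Combining these, $\int_{\partial X}(\Phi,\xi)=|\Gamma|\sum_{i\in I^{\prime}}\int_{0}^{1}(\Phi^{\prime}_{i},\xi_{i})$, so on $\mathcal{N}$
\[
\iota(\xi^{\#})\sigma^{\prime}=\frac{1}{|\Gamma|}\iota(\xi^{\#})\sigma=-\frac{1}{|\Gamma|}\,d\!\int_{\partial X}(\Phi,\xi)=-\,d\sum_{i\in I^{\prime}}\int_{0}^{1}(\Phi^{\prime}_{i},\xi_{i}),
\]
which is exactly the moment-map condition for a Hamiltonian $\prod_{i\in I^{\prime}}L^{(\kappa_i)}G$-space, with the twisted-loop-group pairing given by integration over a fundamental domain. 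One may also note that $\Phi^{\prime}|_{\mathcal{N}}$ remains proper, being inherited from the properness of $\Phi$ since $\mathcal{N}$ is closed in $\mathcal{M}(X)$.

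The step I expect to be the main obstacle is precisely this normalization computation: one has to check that the single factor $|\Gamma|$ produced by the global rescaling $\sigma^{\prime}=\sigma/|\Gamma|$ is cancelled simultaneously, orbit by orbit, by the two contributions $|\Gamma|/m_i$ (length of the orbit of $i$) and $m_i$ (passage from the whole circle $S^{1}_{m_i}$ to a fundamental domain of the $\Gamma(i)$-action), and that the $1$-periodicity of the integrand used in the last step is legitimate --- which is where the standing hypothesis that the $\Gamma$-action, equivalently each $\kappa_i$, preserves the inner product on $\mathfrak{g}$ is used. Everything else is either copied verbatim from the Hamiltonian $LG$-case or already contained in the identifications $\mathcal{G}(\partial X)^{\Gamma}\cong\prod_{i\in I^{\prime}}L^{(\kappa_i)}G$, $\Omega^{0}(\partial X,\mathfrak{g})^{\Gamma}\cong\prod_{i\in I^{\prime}}L^{(\kappa_i)}\mathfrak{g}$, and $\Omega^{1}(\partial X,\mathfrak{g})^{\Gamma}\cong\prod_{i\in I^{\prime}}L^{(\kappa_i)}\mathfrak{g}^{*}$ recorded above.
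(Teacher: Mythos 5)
Your proposal is correct and follows essentially the same route as the paper: restrict the moment-map identity of $(\mathcal{M}(X),\sigma,\Phi)$, split $\int_{\partial X}$ over boundary components, group them into $\Gamma$-orbits of size $|i\cdot\Gamma|=|\Gamma|/m_i$, and reduce each $\oint_{V_i}$ to $m_i=|\Gamma(i)|$ copies of $\int_0^1$, so that $|i\cdot\Gamma|\cdot|\Gamma(i)|=|\Gamma|$ cancels the normalization in $\sigma'=\sigma/|\Gamma|$ --- exactly the chain of equalities in the paper's proof. The structural points and the averaging argument for weak non-degeneracy that you spell out are the same ones the paper either states as trivial or delegates to its earlier fixed-point proposition.
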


\begin{proof}
  It is trivial that $\sigma^{\prime}$ is symplectic and $\Phi^{\prime}$ is equivariant for twisted loop groups actions.
  So, we prove the moment map condition;

  For any $[A] \in \mathcal{M}(X)$ and $(\xi_i) = \xi \in \underset{i \in I^{\prime}}{\prod}L^{(\kappa_i)}\mathfrak{g}$,

\begin{align*}
  \iota(\xi^{\#})\sigma^{\prime}_{[A]} &= \frac{1}{\left| \Gamma\right|} \iota(\xi^{\#})\sigma_{[A]}|_{\Gamma} \\
  &= -\frac{1}{\left| \Gamma\right|} d \int_{\partial X} (\Phi(A), \xi) \\
  &= -\frac{1}{\left| \Gamma\right|} \sum_{i \in I} d \oint_{V_i}(A|_{V_i}, \xi|_{V_i}) \\
  &= -\frac{1}{\left| \Gamma\right|} \sum_{i \in I^{\prime}} \left| i \cdot \Gamma\right| d \oint_{V_i}(A|_{V_i}, \xi_i)\\
  &= -\frac{1}{\left| \Gamma\right|} \sum_{i \in I^{\prime}} \left| i \cdot \Gamma\right| \left| \Gamma(i)\right| d \int_{0}^{1} (A|_{V_i}, \xi_i)\\
  &= - \sum_{i \in I^{\prime}} d \int_{0}^{1} (\Phi^{\prime}(A)_i, \xi_i).
\end{align*}

\end{proof}

\subsection{The space $M(X)$ and $\Gamma$-compatibilities}

Let $\beta \subset \partial X$ be a finite subset (base points) which is $\Gamma$-invariant and we fix a cyclic ordering of $\beta_i = (b^{i}_{\lambda}) \coloneqq \beta \cap V_i$.
We denote

\[\mathcal{G}_{\beta}(X) \coloneqq \set{g \in \mathcal{G}(X)}{ g|_{\beta} = 1},\]
\[\mathcal{G}_{\beta}(\partial X) \coloneqq \set{g \in \mathcal{G}(\partial X)}{g|_{\beta}=1}.\]

Set

\[M(X) \coloneqq \mathcal{A}_{flat}(X)/\mathcal{G}_{\beta}(X) \simeq \mathcal{M}(X)/\mathcal{G}_{\beta}(\partial X).\]

An action of $\mathcal{G}(\partial X)/\mathcal{G}_{\beta}(\partial X) \simeq G^{\beta}$ on $M(X)$ is induced from the residual gauge action on $\mathcal{M}(X)$.
The collection of holonomy map $\Hol^{b^i_{\lambda}}_{b^i_{\lambda+1}}(A)$ define a $G^{\beta}$-equivariant map

\[\mu \colon M(X) \to \bH , A \mapsto \   \Hol(A) \coloneqq (\Hol^{b^i_{\lambda}}_{b^i_{\lambda+1}}(A))_{b^i_{\lambda} \in \beta},\]

where $\bH = G^{\beta}$ is a $G^{\beta}$-bitorsor under following left and right actions,

\begin{gather*}
  (g \cdot h)(b^{i}_{\lambda}) = g(b^i_{\lambda})h(b^i_{\lambda}),\\
  (h \cdot g)(b^i_{\lambda}) = h(b^i_{\lambda})g(b^{i}_{\lambda+1}) \quad (g \in G^{\beta}, h \in \bH).
\end{gather*}

An action of $\Gamma$ on $M(X)$ is induced from the $\Gamma$-action on $\mathcal{M}(X)$. 

The following theorem which is the analogy of \cite[Theorem 9.1]{AMM} holds.

\begin{theo}
  There is a natural $G^{\beta}$-invariant 2-form $\omega$ on $M(X)$ for which $(M(X),\omega,\mu)$ is a $\Gamma$-compatible q-Hamiltonian $G^{\beta}$-space.
\end{theo}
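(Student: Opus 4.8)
The plan is to reduce the statement to the equivalence theorem of the previous section together with the loop-group computation in Proposition \ref{formula on loop group}. First I would recall that $M(X) \simeq \mathcal{M}(X)/\mathcal{G}_\beta(\partial X)$ and that $\mathcal{M}(X)$ is a $\Gamma$-compatible Hamiltonian $\mathcal{G}(\partial X)$-space with proper moment map. The residual action of $\mathcal{G}(\partial X)/\mathcal{G}_\beta(\partial X) \simeq G^\beta$ on $M(X)$ is compatible with the $\Gamma$-action, and the $\Gamma$-action descends from $\mathcal{M}(X)$ to the quotient because $\mathcal{G}_\beta(X)$ is $\Gamma$-invariant (the base point set $\beta$ is $\Gamma$-invariant). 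So the only thing that genuinely needs to be produced is the $2$-form $\omega$ on $M(X)$ and verification of (QH1)--(QH3) together with the three clauses of Definition \ref{def_compati_q-Ham}.

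The key steps, in order, would be: (i) Apply the equivalence theorem (the $\Gamma$-compatible version, stated earlier with base points $\beta = 0\cdot\Gamma$, here generalized to the base point set $\beta \subset \partial X$) to the $\Gamma$-compatible Hamiltonian $\mathcal{G}(\partial X)$-space $\mathcal{M}(X)$: taking the holonomy manifold along $\Omega^\beta G = \set{g \in \mathcal{G}(\partial X)}{g|_\beta = 1_G}$ produces a $\Gamma$-compatible q-Hamiltonian $G^\beta$-space whose underlying space is precisely $M(X)$, whose moment map is the collection of boundary holonomies $\mu = \Hol$, and whose $2$-form $\omega$ is characterized by $\sigma = p^*\omega + \Hol^*\varpi$ where $p\colon \mathcal{M}(X) \to M(X)$ is the projection and $\varpi$ is the loop-group $2$-form of Proposition \ref{formula on loop group} (here one uses that $\mathcal{G}(\partial X) \simeq LG^{r+1}$, so $\varpi$ is a sum of the per-boundary-component loop-group forms). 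This step simultaneously gives (QH1), (QH2), (QH3): (QH1) and (QH2) follow from the Hamiltonian moment map condition on $\sigma$ combined with $d\varpi = -\Hol^*\chi$ and $\iota(\xi^\#)\varpi = -d\oint(A,\xi) - \tfrac12\Hol^*(\theta+\overline\theta,\xi|_\beta)$, exactly as in \cite[Theorem 9.1]{AMM}; (QH3) comes from weak non-degeneracy of $\sigma$ together with properness of the moment map. (ii) Check the three compatibility clauses: clause $(1)$, $\varphi\cdot(g\cdot p) = (\varphi\cdot g)\cdot(\varphi\cdot p)$, is inherited from the $\Gamma$-compatibility of $\mathcal{M}(X)$ (Lemma \ref{lemm of compati for conn}) after passing to the quotient; clause $(3)$, $\Gamma$-equivariance of $\mu = \Hol$, follows from $\Gamma$-equivariance of the holonomy construction together with the $\Gamma$-action on the bitorsor $\bH = G^\beta$ defined in the previous subsection, using that the $\Gamma$-action permutes the base points $\beta$ cyclically within each orbit so that the permuted holonomies match up — this is essentially the content of Example \ref{exam_bitorsor}; clause $(2)$, $\Gamma$-invariance of $\omega$, is the most delicate and I would deduce it from $\Gamma$-invariance of $\sigma$ (Lemma \ref{lemm of compati for conn}) and $\Gamma$-invariance of $\Hol^*\varpi$ (which holds because $\varpi$ is built from Maurer--Cartan forms and the $\Ad$-invariant, $\Gamma$-invariant inner product on $\mathfrak g$, and the $\Gamma$-action on $\mathcal{G}(\partial X)$ intertwines appropriately with the shift on the $S^1_{m_i}$'s) via the uniqueness part of the equivalence theorem — since $p^*\omega$ is determined by $\sigma$ and $\Hol^*\varpi$, and all three pieces are $\Gamma$-invariant, so is $p^*\omega$, hence $\omega$ by surjectivity of $p$.

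The main obstacle I anticipate is step (i) in the case where the $\Gamma$-action on $\partial X$ has nontrivial, possibly \emph{unequal} stabilizers $\Gamma(i)$ on the different boundary components: the equivalence theorem as stated in the excerpt is phrased for a single cyclic group $\Gamma = \bZ/m\bZ$ acting on $S^1_m$, whereas here one must work component-by-component with cyclic groups $\Gamma(i) = \langle\kappa_i\rangle$ of varying orders $m_i$ and then glue using the conjugating elements $\tau_{ji}$. Concretely, one needs that taking holonomy along $\Omega^\beta G$ is compatible with the $\Gamma$-action on the whole of $\mathcal{G}(\partial X) \simeq \prod_i \mathcal{G}(V_i)$ rather than just on one factor; this requires knowing that $\Omega^\beta G$ is $\Gamma$-invariant (true since $\beta$ is $\Gamma$-invariant) and that the normalization constants $\tfrac1{m_i}$ appearing in the single-component analysis (see the end of the subsection on fixed points of $\mathcal{M}(X)$) assemble correctly — but these have in effect already been handled in the preceding subsection, so modulo carefully invoking that bookkeeping the argument goes through.
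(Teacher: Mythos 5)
Your route is genuinely different from the paper's: you propose to black-box the equivalence theorem (holonomy manifolds of $\Gamma$-compatible Hamiltonian $LG$-spaces with proper moment map are $\Gamma$-compatible q-Hamiltonian $G^{\beta}$-spaces), whereas the paper carries out a direct verification in the style of \cite[Theorem 9.1]{AMM}: it sets $\tilde{\omega} = \sigma - \Phi^{*}\varpi$ on $\mathcal{M}(X)$, checks that $\tilde{\omega}$ is basic for the $\mathcal{G}_{\beta}(\partial X)$-action so that it descends to $\omega$ on $M(X)$, and then verifies (QH1)--(QH3) by hand. Your identification of the $2$-form is the correct one (modulo the slip that the loop-group form $\varpi$ lives on $\Omega^{1}(\partial X,\mathfrak{g})$ and must be pulled back by $\Phi$, not by $\Hol$), and your explicit treatment of the three compatibility clauses of Definition \ref{def_compati_q-Ham} is actually more complete than the paper's proof, which concentrates on the q-Hamiltonian axioms and leaves the $\Gamma$-compatibility implicit.

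The genuine gap is in your step (i). The equivalence theorem available in the paper is stated only for a single loop group $LG=\Map(S^{1}_{m},G)$ with base points $\beta = 0\cdot\Gamma$ and a single cyclic $\Gamma$ acting by rotation, and it is stated without proof; here the relevant group is $\mathcal{G}(\partial X)\simeq LG^{r+1}$ with $\Gamma$ permuting the boundary circles and with cyclic stabilizers $\Gamma(i)$ of varying orders, so the theorem you want to cite is not actually established in the required generality. Because of this, the assertion that (QH3) ``comes from weak non-degeneracy of $\sigma$ together with properness of the moment map'' conceals precisely the hard core of the paper's argument: one must identify $(\ker d_{A}\Phi)^{\sigma}$ with the tangent space to the $\mathcal{G}(\partial X)$-orbit, then, given $\xi$ with $\xi|_{\beta}\in\ker(\Ad_{\mu(m)}-1)$, construct the sections $\eta_{j}(s)=\Ad(\Hol^{b^{j}_{0}}_{s}(\Phi(A))^{-1})\xi(b^{j}_{0})$, verify via Lemma \ref{lemma of calc of Hol} that $\iota(\eta^{\#})\Phi^{*}\varpi=0$, and conclude $\eta^{\#}=0$ from weak non-degeneracy of $\sigma$. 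This computation (or a proof of the multi-component, $\Gamma$-permuting version of the equivalence theorem, which the preceding subsection on fixed points of $\mathcal{M}(X)$ does not supply) is what your proposal is missing; without it the minimal degeneracy condition is not established.
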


\begin{proof}
  By analogy of the proof of \cite[Theorem 9.1]{AMM}
  In this proof, let $\pi \colon \mathcal{M}(X) \to M(X) $ be the quotient map.
  
  First we will show the equivariance of $\mu \colon M(X) \to \bH$.
  For any $g \in G^{\beta}$ and $\pi(A) \in M(X)$,
  \begin{align*}
    \mu(g \cdot \pi(A)) &= \mu (\pi(\tilde{g}\cdot A)) \\
    &= \Hol(\tilde{g}\cdot A)\\
    &= \Ad_{g} \mu(\pi(A)).
  \end{align*}

  Set $R_{j} \colon \Omega^1(\partial X , \mathfrak{g}) \to \Omega^1(V_j,\mathfrak{g})$ be a restriction and let $\varpi_{j}$ be the canonical 2-form on $\Omega^{1}(V_j, \mathfrak{g}) \simeq L\mathfrak{g}^*$.
  Denote $\varpi \coloneqq \sum_{j=0}^{r} R_{j}^*\varpi_{j}$,
  we define a 2-form on $\mathcal{M}(X)$,

  \[\tilde{\omega} \coloneqq \sigma - \Phi^*\varpi .\]

  Then the 2-form $\tilde{\omega}$ is basic for $\mathcal{G}_{\beta}(\partial X)$-action for $\mathcal{M}(X)$.
  Since $\sigma$ and $\varpi$ are $\mathcal{G}(\partial X)$-invariant, $\tilde{\omega}$ becomes a $\mathcal{G}(\partial X)$-invariant 2-form.
  Furethemore for any $\xi \in \Lie(\mathcal{G}_{\beta}(\partial X))$, 
  \begin{align*}
    \iota(\xi^{\#})\tilde{\omega}_A &= \iota(\xi^{\#})\sigma_A - \iota(\xi^{\#}) \Phi^* \varpi_A\\
    &= \iota(\xi^{\#}) \sigma_A - \sum_{j=0}^{r} {R_j}^* (\iota({\xi|_{V_j}}^{\#})\varpi_j)\\
    &= -d \int_{\partial X }(A|_{\partial X}, \xi) \\
    &\ \ \ \ \ \ \ \ \ \ - \sum_{j=0}^{r} {R_{j}}^*\left(-d \oint (A|_{V_j},\xi|_{V_j}) -\frac{1}{2}(\Hol_{j})^* (\theta_{j}+ \overline{\theta}_{j}, \xi|_{\beta_{j}}) \right) \\
    &= 0.
  \end{align*}
  where $\Hol_j \colon \Omega^1(V_{j},\mathfrak{g}) \to \bH^{j} \coloneqq G^{\beta_j}, A_j \mapsto (\Hol^{b^{j}_\lambda}_{b^{j}_\lambda}(A_j))$, and $\theta_j $(resp. $\overline{\theta}$) be the left (resp. right) Maurer-Cartan form on $\bH^j$.

  So, $\tilde{\omega}$ induces a unique $G^{\beta}$-invariant 2-form $\omega $on $M(X)$ which satisfies $\pi^* \omega = \tilde{\omega}$. 
  Now we will prove the tuple $(M(X),\omega,\mu)$ is a tq-Hamiltonian $G^{\beta}$ space.

  We denote the canonical 3-form on $\bH$ by $\chi_{\bH}$, i.e. $\chi_{\bH} = \frac{1}{12}(\theta_{\bH},[\theta_{\bH},\theta_{\bH}])$.
  Then, by $\Hol \circ \Phi = \mu \circ \pi$
  \begin{align*}
    \pi^*d\omega &= d\tilde{\omega}\\
    &= d\left( \sigma - \Phi^* \varpi\right) \\
    &= \Phi^* \Hol^* \chi_{\bH} \\
    &= \pi^* \mu^* \chi_{\bH}.
  \end{align*}

  Therefore we have proved $d \omega = \mu \chi_{\bH}$.
  Next we want to show the moment map condition. 
  For any $\xi \in \mathfrak{g}^{\beta}$, there exist $\tilde{\xi} \in \Omega^0(\partial X,\mathfrak{g})$ such that $\tilde{\xi}|_{\beta} = \xi$.
  Then
  \begin{align*}
    \pi^*(\iota(\xi^{\#})\omega) &= \iota(\tilde{\xi}) \pi^*\omega \\
    &= \iota(\tilde{\xi}^{\#}) (\sigma-\Phi^*\varpi)\\
    &= \frac{1}{2} \sum_{j=0}^{r} {R_j}^*{\Hol_j}^*(\theta_j+ \overline{\theta}_j,\tilde{\xi}|_{\beta_{j}})\\
    &= \pi^* \left( \frac{1}{2} \mu^* (\theta_{\bH} + \overline{\theta}_{\bH},\xi)\right).
  \end{align*} 
  So, the moment map condition holds.

  Finally we will prove the degeneracy of $\omega$, i.e. we will show $\ker \omega_m \cap \ker d_m \mu = \{0\}$ for any $m \in M(X)$.
  
  Fix $v \in \ker \omega_m \cap \ker d_m \mu$ and $A \in \mathcal{M}(X)$ such that $\pi(A)=m$.
  Since $\pi \colon \mathcal{M}(X) \to M(X)$ is a submersion, there exist a tangent vector $X \in T_A \mathcal{M}(X)$ which satisfies $d_A\pi(X)=v$.
  
  For any $Y \in T_A\mathcal{M}(X)$

  \[\tilde{\omega}_A(X,Y) = \pi^*\omega _A(X,Y) = 0.\]

  Therefore $X \in \ker \tilde{\omega}_A$. By the definition of $\tilde{\omega}$, the following holds,

  \[ \iota(X) \sigma= \iota(X) \Phi^*\varpi. \]

  So we can say $X \in \left(\ker d_A \Phi\right)^{\sigma}$.
  Since the space $\left( \ker d_A \Phi\right)^{\sigma}$ equals to the tangent space of the orbit $\mathcal{G}(\partial X) \cdot A$, i.e. there exist $\xi \in \Omega^0(\partial X , \mathfrak{g})$ that the generating vector $\xi^{\#}$ equals to $X$ (\cite[Theorem9.1]{AMM}).

  Since ${\xi|_{\beta}}^{\#} \in \ker d_m  \mu$, we obtain 
  \[ \xi|_{\beta} \in \ker(\Ad_{\mu(m)}-1).\]

  Now we define $\eta_j \in \Omega^0(V_j,\mathfrak{g})$ by

  \[\eta_j(s) \coloneqq \Ad(\Hol^{b^{j}_0}_{s}(\Phi(A))^{-1})\xi(b^{j}_0).\]

  Because $\Ad_{\mu(m)}\xi|_{\beta} = \xi|_{\beta}$ , 

  \[\eta_j(b^{j}_{\lambda_j}) = \Ad(\Hol^{b^{j}_0}_{b^{j}_{\lambda_j}}(\Phi(A)))\xi(b^{j}_0) = \xi(b^{j}_{\lambda_j}).\]

  Therefore $\eta_j$ is well defined.
  By collecting $\eta_j$ for all $j =0,\dots,r$ we define $\eta \in \Omega^0(\partial X, \mathfrak{g})$.

  Because $(\eta-\xi)|_{\beta} = 0$, $\eta^{\#} = (\eta - \xi)^{\#} - \xi^{\#}$ is in $\ker(\sigma- \Phi^* \varpi)$.

  Now we will prove $\iota(\eta^{\#})\Phi^*\varpi=0$.
  For any $\alpha \in T_A \mathcal{M}(X)$, by the Proposition \ref{formula on loop group} and the Lemma \ref{lemma of calc of Hol} ,

  \begin{align*}
    \iota(\alpha)\iota(\eta^{\#}) \Phi^*\varpi &= \sum_{j=0}^{r} \iota(\alpha|_{V_j}) \iota({\eta|_{V_j}}^{\#}) \varpi_j\\
    &=-\sum_{j=0}^{r} \iota(\alpha|_{V_j}) d \oint (A|_{V_j}, \eta|_{V_j}) - \frac{1}{2} \iota(\alpha)\Hol^*(\theta_{\bH}+ \overline{\theta}_{\bH}, \eta|_{\beta}) \\
    &=-\sum_{j=0}^{r} \oint (\alpha|_{V_j}, \eta|_{V_j}) - \iota(\alpha) \iota(\xi^{\#}) \omega\\
    &= -\sum_{j=0}^{r} \sum_{\lambda_j = 1}^{m_j} \int_{b^j_{\lambda_j}}^{b^j_{\lambda_{j+1}}} (\alpha|_{V_j}, \Ad(\Hol^{b^j_{\lambda_{j}}}_{s}(A|_{V_j})^{-1})\xi(b^j_{\lambda_j}))ds\\
    &=\sum_{j=0}^{r} \sum_{\lambda_j =1}^{m_j} \int _{b^j_{\lambda_j}}^{b^j_{\lambda_{j+1}}} (\Ad(\Hol^{b^j_{\lambda_j}}_{s}(A))\alpha|_{V_j}, \xi(b^j_{\lambda_j})) \\
    &= \sum_{j=0}^{r} \sum_{\lambda_j =1}^{m_j} \left( \iota(\alpha|_{V_j}){\left(\Hol^{b^j_{\lambda_j}}_{b^j_{\lambda_{j+1}}}\right)}^*\overline{\theta}, \xi(b^j_{\lambda_j})\right) \\
    &= \frac{1}{2} \sum_{j=0}^{r} \sum_{\lambda_j=0}^{m_j} \left( \iota(\alpha|_{V_j}){\left(\Hol^{b^j_{\lambda_j}}_{b^j_{\lambda_{j+1}}}\right)}^*\overline{\theta} , \xi(b^j_{\lambda_j}) \right)\\
    &\ \ \ \ \ \ + \frac{1}{2} \sum_{j=0}^{r} \sum_{\lambda_j=0}^{m_j} \left( \iota(\alpha|_{V_j}){\left(\Hol^{b^j_{\lambda_j}}_{b^j_{\lambda_{j+1}}}\right)}^*\overline{\theta} , \Ad(\Hol^{b^j_{\lambda_j}}_{b^j_{\lambda_{j+1}}})\xi(b^j_{\lambda_{j+1}}) \right) \\
    &= \iota(\alpha) \frac{1}{2} \Hol^*(\theta_{\bH}+ \overline{\theta}_{\bH}, \xi|_{\beta}) \\
    &=0.
  \end{align*}

  Concidering $\iota(\eta^{\#}) \Phi^*\varpi=0$ and $\eta^{\#} \in \ker(\sigma- \Phi^*\varpi)$, we obtain 
  \[\iota(\eta^{\#})\sigma= \iota(\eta^{\#})((\sigma - \Phi^*\varpi)+\Phi^*\varpi) =0.\]

  Since $\sigma$ is weak non-degenerate, $\eta^{\#}$ is zero.

  Therefore we conclude,

  \[{\xi|_{\beta}}^{\#} = {(\xi-\eta)|_{\beta}}^{\#} + {\eta|_{\beta}}^{\#}=0.\]

\end{proof}

\begin{remk}
  If the group $\Gamma$ is trivial, we can choose any finite subset of $\partial X$ as  the set of base points.
  Then the space $M(X)$ is a straightforward generalization of the space in \cite[Theorem 9.1]{AMM}.
\end{remk}

\subsection{Fixed points of $M(X)$}\label{section_twisted_reps}
Taking fixed point objects of $(M(X),\omega,\mu)$, we obtain a twisted q-Hamiltonian $(G^{\beta})^{\Gamma}$-space $(M(X)^{\Gamma},\omega^{\Gamma},\mu^{\Gamma})$.
We denote $Y\coloneqq X/\Gamma$ and $\beta_Y \coloneqq \pi(\beta)$, where $\pi \colon X \to Y$ is the covering map.

Taking a holonomy yields the following correspondence

\[M(X) \simeq \Hom(\Pi_1(X,\beta),G).\]

This correspondence is $\Gamma$-equivariant for an action on $\Hom(\Pi_1(X,\beta),G)$
\begin{align*}
  &(\varphi \cdot \rho)(\gamma) = \varphi \rho(\gamma\cdot\varphi) \\
  &\varphi \in \Gamma,\ \rho \in \Hom(\Pi_1(X,\beta),G),\ \gamma \in \Pi_1(X,\beta).
\end{align*}
Therefore, we obtain a bijective
\[M(X)^{\Gamma} \simeq \Hom(\Pi_1(X,\beta),G)^{\Gamma}.\]

Now we want to concider the space $\Hom(\Pi_1(X,\beta),G)^{\Gamma}$ as the space on $Y$.
Let $G_Y$ be a group bundle $(X \times G) /\Gamma$ on $Y$ with a fiber $G$.
By fixing a complete system of representatives $I$ of $\beta_X$ for $\Gamma$-action, 
we obtain the monodromy representation of $G_Y$ $\mon_{Y,I} \colon \Pi_1(Y,\beta_Y) \to \Gamma$ (cf.Section \ref{group bundle and monodromy}).
We denote the space of representations twisted by the morphism $\mon_{Y,I}$ by
\[\Hom_{\mon{Y,I}}(\Pi_1(Y,\beta_{Y}),G) \coloneq \set{\underline{\rho} \in \Hom(\Pi_1(Y,\beta_{Y}),\Gamma \ltimes G)}{ \pr \circ \underline{\rho} = \mon_{Y,I}}.\]
Then, the covering map $\pi \colon X \to Y$ induces an isomorphism
\[\Hom_{\mon_{Y,I}}(\Pi_1(Y,\beta_{Y}),G) \simeq \Hom(\Pi_1(X,\beta)G)^{\Gamma}.\]
Therefore, we obtain an isomorphism
\[M(X)^{\Gamma} \simeq \Hom_{\mon_{Y,I}}(\Pi_1(Y,\beta_{Y}),G).\]
Since $\Hom_{\mon_{Y,I}}(\Pi_1(Y,\beta_{Y}),G)$ is connected, the tuple $(M(X)^{\Gamma}, \omega^{\Gamma}, \mu^{\Gamma})$ is a twisted quasi-Hamiltonian $(G^{\beta})^{\Gamma}$-space.
Also $\Hom_{\mon_{Y,I}}(\Pi_1(Y,\beta_{Y}),G)$ carries a twisted quasi-Hamiltonian $G^{\beta_{Y}}$-structure.

\subsection{The 2-form on $M(X)$}
In this subsection, suppose base points on a boundary component $\beta_i \subset V_i$ is in one orbit for $\Gamma$-action on $\beta$.
Let $r_Y$ be a non-negative integer such that $r_Y+1$ is the number of boundary components of $Y$ and let $g_Y$ be the genus of $Y$.
Then $\beta_Y$ has only one point on each boundary components of $Y$.
We denote $\beta_Y = \{b^0,b^1,\dots,b^{r_Y}\}$. Let $Q_Y$ denote a quiver which generates free groupoid  $\Pi_1(Y,\beta_Y)$,

\[Q_Y \coloneqq \set{a^k,b^k,\partial^j,\gamma^j}{k=1,\dots,g_Y,\ j=1,\dots,r_Y}\]
where $a^k$ and $b^k$ are $a$-cycles and $b$-cycles and a $\partial^j$ is a loop along a boundary component based at $b^j$ and a $\gamma^j$ is a path from $b^0$ to $b^j$.
Set $\beta_j \coloneqq \pi^{-1}(b^j)$ and we label $\beta_0 = \{b^0_1,\dots, b^0_m\}$.
Furethemore, let $\gamma^j_\lambda$ be the unique lifting of $\gamma^j$ starting at $b^0_\lambda$.
By using $\gamma^j_\lambda$, we set $\beta^{j}_{\lambda} \coloneqq t(\gamma^{j}_{\lambda})$, where $t(\gamma^{j}_{\lambda})$ is the target of $\gamma^{j}_{\lambda}$.
Set a quiver,
\[Q_X \coloneqq \{a^k_\lambda,b^k_\lambda,\partial^j_\lambda,\gamma^j_\lambda\},\] 
the fundamental groupoid $\Pi_1(X,\beta)$ is the free groupoid(cf. \cite{mac1998categories}) of $Q_X$.

We define four $\Gamma$-compatible $G^m$-bitorsors (Example \ref{exam_bitorsor})
\begin{align*}
  \bC^{j} &\coloneq \set{C^{(j)} = (\rho(\gamma^{j}_{\lambda}))_{\lambda=1}^{m}}{ \rho \in \Hom(\Pi_1(X,\beta),G)} \\
  \bH^{j}&\coloneq \set{h^{(j)} = (\rho(\partial^{j}_{\lambda}))_{\lambda=1}^{m}}{ \rho \in \Hom(\Pi_1(X,\beta),G)} \\
  \bA^{k} &\coloneq \set{A^{(k)} = (\rho(a^{k}_{\lambda}))_{\lambda=1}^{m}}{ \rho \in \Hom(\Pi_1(X,\beta),G)} \\
  \bB^{k} &\coloneq \set{B^{(k)} = (\rho(b^{k}_{\lambda}))_{\lambda=1}^{m}}{ \rho \in \Hom(\Pi_1(X,\beta),G)}.
\end{align*}

We define permutations $t^{j}_{\partial},t^{k}_{a},t^{k}_{b}$ of $\left\{1,\dots,m\right\}$ by
\begin{align*}
  s(\partial^{j}_{t^{j}_{\partial}(\lambda)}) &= t(\partial^{j}_{\lambda}) \\
  s(a^{k}_{t^{k}_{a}(\lambda)}) &= t(a^{k}_{\lambda}) \\
  s(b^{k}_{t^{k}_{b}(\lambda)}) &= t(b^{k}_{\lambda}).
\end{align*} 
Also we define left and right $G^{m}$-actions on each space by
\begin{align*}
  (g \cdot C^{(j)})_{\lambda} &\coloneq g_{\lambda}C^{(j)}_{\lambda},\ (C^{(j)} \cdot g)_{\lambda} \coloneq C^{(j)}_{\lambda}g_{\lambda}   \\
  (g \cdot h^{(j)})_{\lambda} &\coloneqq g_{t^{j}_{\partial}(\lambda)}h^{(j)}_{\lambda},\ (h^{(j)}\cdot g)_{\lambda} \coloneq h^{(j)}_{\lambda} g_{\lambda} \\
  (g \cdot A^{(k)})_{\lambda} &\coloneqq g_{t^{k}_{a}(\lambda)}A^{(k)}_{\lambda},\ (A^{(k)}\cdot g)_{\lambda} \coloneq A^{(k)}_{\lambda} g_{\lambda} \\
  (g \cdot B^{(k)})_{\lambda} &\coloneqq g_{t^{k}_{b}(\lambda)}B^{(k)}_{\lambda},\ (B^{(k)}\cdot g)_{\lambda} \coloneq B^{(k)}_{\lambda} g_{\lambda}.
\end{align*}

So, we can construct the double $\textbf{D}^{(j)} \coloneqq \textbf{D}(\bC^j,\bH^j)$ and the fused double $\bD^{(k)} \coloneqq \bD(\bA^k,\bB^k)$.
Then we obtain a $\Gamma$-compatible q-Hamiltonian $G^{\beta}$-structure on $\Hom(\Pi_1(X,\beta),G)$ 
\[ \bigfusion_{j}\ \textbf{D}^{(j)} \circledast \bigfusion_{k}\ \bD^{(k)}.\]
This structure coincides with the structure coming from $M(X)$ (This is an analogy of \cite[Theorem 9.3]{AMM}).

\begin{theo}
  $(M(X),\omega,\mu)$ is isomorphic to the fusion product
  \[ \bigfusion_{j}\ \textbf{D}^{(j)} \circledast \bigfusion_{k}\ \bD^{(k)}\]
\end{theo}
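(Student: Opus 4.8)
The plan is to follow the argument of \cite[Theorem 9.3]{AMM}, adapted to the groupoid-with-several-basepoints and $\Gamma$-equivariant situation. The starting observation is that $\Pi_1(X,\beta)$ is the free groupoid on the quiver $Q_X$, so a representation $\rho\in\Hom(\Pi_1(X,\beta),G)$ is freely and uniquely determined by the tuple $\bigl((\rho(a^k_\lambda))_\lambda,(\rho(b^k_\lambda))_\lambda,(\rho(\partial^j_\lambda))_\lambda,(\rho(\gamma^j_\lambda))_\lambda\bigr)$ of its values on the edges. Combined with the holonomy identification $M(X)\simeq\Hom(\Pi_1(X,\beta),G)$, this exhibits the manifold underlying $M(X)$ as $\prod_j(\bC^j\times\bH^j)\times\prod_k(\bA^k\times\bB^k)$, which is precisely the manifold underlying $\bigfusion_j\textbf{D}^{(j)}\circledast\bigfusion_k\bD^{(k)}$. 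First I would take the candidate isomorphism $\Phi$ to be this tautological identification and then check that it respects the $G^\beta$-action, the $\Gamma$-action, the moment map, and the 2-form.

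The equivariances come almost for free from the way the bitorsors were set up. On $\Hom(\Pi_1(X,\beta),G)$ the $G^\beta$-action is the residual gauge (endpoint-conjugation) action $(g\cdot\rho)(\gamma)=g(t(\gamma))\,\rho(\gamma)\,g(s(\gamma))^{-1}$, and the permutations $t^j_\partial,t^k_a,t^k_b$ were defined precisely so that, edge by edge, the sources and targets of $\partial^j_\lambda,a^k_\lambda,b^k_\lambda$ are matched by the left and right $G^m$-actions on $\bH^j,\bA^k,\bB^k$; the fusion operation then assembles these into the diagonal $G^\beta$-action. The $\Gamma$-equivariance is analogous: $\bC^j,\bH^j,\bA^k,\bB^k$ carry the $\Gamma$-compatible structures of Example \ref{exam_bitorsor} coming from how $\Gamma$ permutes the lifts $\gamma^j_\lambda$ together with the action on $G$, and $\Gamma$ acts on $\Hom(\Pi_1(X,\beta),G)$ by $(\varphi\cdot\rho)(\gamma)=\varphi\,\rho(\gamma\cdot\varphi)$, so matching these is a direct check once one records the action of $\varphi$ on $Q_X$. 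Then I would match the moment maps: that of $M(X)$ is the collection of short boundary-arc holonomies, equal on $V_1,\dots,V_{r_Y}$ to the $\rho(\partial^j_\lambda)$ and on $V_0$ to the element forced by the relation that expresses $\partial^0$ in $\Pi_1(Y,\beta_Y)$; that of the fusion product is the iterated bitorsor product of the moment maps of the doubles $\textbf{D}^{(j)}=\textbf{D}(\bC^j,\bH^j)$ and the fused doubles $\bD^{(k)}=\bD(\bA^k,\bB^k)$, namely the product of the commutators $[\rho(a^k_\lambda),\rho(b^k_\lambda)]$ and the conjugates $\rho(\gamma^j_\lambda)\rho(\partial^j_\lambda)\rho(\gamma^j_\lambda)^{-1}$. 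Equality is the lifted van Kampen (surface) relation for $X$, read through the permutation data.

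The hard part will be showing that $\Phi$ intertwines the two 2-forms, i.e.\ that $\Phi^*$ of the fusion form $\sum_j\omega_{\textbf{D}^{(j)}}+\sum_k\omega_{\bD^{(k)}}+(\text{fusion correction terms})$ equals the form $\omega$ on $M(X)$, the latter being characterised by $\pi^*\omega=\sigma-\Phi^*_{\mathcal{M}}\varpi$ on $\mathcal{M}(X)$. I see two routes. The direct one is to pull both forms back to $\mathcal{A}_{flat}(X)$, expand everything in holonomies along the edges of $Q_X$ and the Atiyah--Bott form, and match term by term using Lemma \ref{lemma of calc of Hol} and Proposition \ref{formula on loop group}, as in \cite[\S9]{AMM} and in the degeneracy computation of the preceding theorem; balancing the fusion correction terms against the cross-terms in $\Phi^*_{\mathcal{M}}\varpi$ is the delicate point. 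The alternative is to invoke uniqueness: $(M(X),\omega,\mu)$ and the fusion product are two q-Hamiltonian $G^\beta$-structures on the same $G^\beta$-manifold with the same moment map, so the difference of their 2-forms is closed and $G^\beta$-basic, and passing through the equivalence theorem to the Hamiltonian $LG$-space picture, where the 2-form is genuinely weakly non-degenerate and determined by the moment map and the action, forces it to vanish. I would give the direct computation and remark on the equivalence-theorem shortcut.

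Since $\Phi$ is a diffeomorphism by the free-groupoid description, once the action, the moment map, and the 2-form are matched we conclude that $\Phi$ is an isomorphism of $\Gamma$-compatible q-Hamiltonian $G^\beta$-spaces; everything except the 2-form identity is the combinatorics of $Q_X$ and its lifts, already packaged into Example \ref{exam_bitorsor} and the permutations $t^j_\partial,t^k_a,t^k_b$.
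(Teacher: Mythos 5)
Your main route is essentially the paper's: both follow \cite[Theorem 9.3]{AMM}, identifying $M(X)$ with the edge-holonomy data of the free groupoid on $Q_X$ and then matching the 2-form by a direct computation against the Atiyah--Bott form, which the paper organizes by cutting $Y$ into a polygon $P$, lifting to polygons $P_\lambda \subset X$ with $X = \bigcup_\lambda P_\lambda$, and writing $\sigma = \sum_\lambda \sigma_\lambda$ so that each summand is computed exactly as in the single-polygon case of \cite{AMM}. One caution: your proposed ``uniqueness'' shortcut is not sound as stated, since two q-Hamiltonian structures with the same action and moment map differ by a closed, invariant, basic 2-form annihilated by all generating vector fields, which need not vanish in general, so the direct computation is the one to carry out.
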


\begin{proof}
  The idea of this proof is based on \cite[Theorem 9.3]{AMM}.
  Let $P$ denote the polyhedron obtained by cutting $Y$ along the path $\gamma^j,a^j,b^j$.
  The boundary $\partial P$ consists of $3r_Y + 4g_Y+1$ segments

  \[\partial P = (\gamma^1)^{-1}\partial^1 (\gamma^1)\dots(\gamma^{r_Y})^{-1}\partial^{r_Y}(\gamma^{r_Y})[a^1,b^1]\dots[a^{g_Y},b^{g_Y}]\partial^0.\]

  Let $\partial P_\lambda$ be the lifting of $\partial P $ starting at $b^0_\lambda$. 
  Then the loop $\partial P_\lambda$ is homotopic to a constant path.
  Let $P_\lambda$ be a lifting of $P$ such that its boundary is $\partial P_\lambda$.
  Then we write
  \[X = \bigcup_{\lambda=1}^{m} P_\lambda \]
where each $P_\lambda $ intersects the others only along their boundary segments.
We define a 2-form on $\mathcal{A}(X)$ by

\[\sigma_\lambda \coloneqq \frac{1}{2} \int_{P_\lambda} (\delta A , \delta A). \]

Then Atiyah--Bott 2-form can be written by

\[ \sigma = \sum_{\lambda=1}^{m} \sigma_\lambda.\]

The only mission left is to caluculate the 2-form $\sigma_\lambda$ along the proof in \cite{AMM}.

\end{proof}

\appendix

\section{Generalizations of the twisted double}
\vspace{20pt}

\begin{center}
{\unitlength 0.1in%
\begin{picture}(24.7000,25.4000)(12.1000,-30.9000)%
%
\special{pn 8}%
\special{ar 2460 1820 319 319 0.0000000 6.2831853}%
%
\special{pn 8}%
\special{ar 2460 1820 1174 1174 0.0000000 6.2831853}%
%
\special{pn 8}%
\special{pa 3570 1760}%
\special{pa 3680 1840}%
\special{fp}%
\special{pa 3670 1750}%
\special{pa 3580 1860}%
\special{fp}%
\special{pa 2520 590}%
\special{pa 2410 730}%
\special{fp}%
\special{pa 2380 550}%
\special{pa 2530 730}%
\special{fp}%
\special{pa 1210 1750}%
\special{pa 1350 1890}%
\special{fp}%
\special{pa 1340 1770}%
\special{pa 1240 1890}%
\special{fp}%
\special{pa 2390 2920}%
\special{pa 2560 3090}%
\special{fp}%
\special{pa 2500 2910}%
\special{pa 2430 3090}%
\special{fp}%
\special{pa 2720 1800}%
\special{pa 2850 1850}%
\special{fp}%
\special{pa 2820 1760}%
\special{pa 2740 1890}%
\special{fp}%
\special{pa 2160 1560}%
\special{pa 2260 1640}%
\special{fp}%
\special{pa 2220 1530}%
\special{pa 2220 1660}%
\special{fp}%
\special{pa 2280 2040}%
\special{pa 2190 2140}%
\special{fp}%
\special{pa 2240 1980}%
\special{pa 2270 2130}%
\special{fp}%
\end{picture}}%

\end{center}

By concidering the moduli space of an annulus $X$ with base points $\beta = \beta_0 \cup \beta_{\infty}$ as above,
we obtain the twisted q-Hamiltonian space which is generalized from twisted double and we denote its q-Hamiltonian structure explicitly.

We fix a cyclic order $\beta_0 = \{ b^0_1,\dotsb^0_{m_0}\}$ and $\beta_{\infty} = \{b^{\infty}_1,\dots,b^{\infty}_{m_\infty}\}$.
We denote pathes by
\begin{itemize}
  \item $\gamma \colon b^0_1 \to b^{\infty}_1,$
  \item $\partial^{\infty}_i \colon b^{\infty}_i \to b^{\infty}_{i+1},$
  \item $\partial^{0}_i \colon b^0_i \to b^0_{i+1}.$ 
\end{itemize}

and for any $\rho \in \Hom(\Pi_1(X,\beta),G)$, we denote the value
\begin{itemize}
  \item $C \coloneqq \rho(\gamma),$
  \item $h_i \coloneqq \rho(\partial ^{\infty}_i),$
  \item $h^0_i \coloneqq \rho(\partial^0_i).$
\end{itemize}

Then, the space $\Hom(\Pi_1(X,\beta)G)$ is identified with

\[\set{(C,(h_i)_{i=1}^{m_\infty}, (h^0_i)_{i=1}^{m_0}) \in G \times G^{m_\infty} \times G^{m_0}}{C^{-1}h_{m_\infty}\dots h_1 C h^0_{m_0}... h^0_{1}=1}.\]

Then the moment map $\mu$ and the 2-form $\omega$ are given as follows
\[\mu(C,h,h^0) = (h^{-1},(h^{0})^{-1}),\]
\begin{align*}
  \begin{autobreak}
  \omega = \frac{1}{2}(C^*\overline{\theta}, \Ad_{h}C^*\overline{\theta}) + \frac{1}{2}(C^*\overline{\theta},h^*\overline{\theta}+h^*{\theta}) + \frac{1}{2}\sum_{i=1}^{m_\infty}((k^\infty_i)^*\overline{\theta}, h_i^*\theta) + \frac{1}{2}\sum_{i=1}^{m_0}((k^0_i)^*\overline{\theta}, (h^0_i)^*\theta),
  \end{autobreak}
\end{align*}
where $k^{\infty}_i$ and $k^0_i$ are defined by$k^{\infty}_i \coloneqq h_{i-1}\dots h_1$ respectively $k^0_i \coloneqq h^0_{i-1}\dots h_1.$

We denote the q-Hamiltonian space $\Hom(\Pi_1(X,\beta),G)$ by $_{m_{\infty}} \textbf{D}_{m_0}$.
Then, if $m_\infty = m_0$, the space $_{m_\infty} \textbf{D}_{m_0}$ is the doube $\textbf{D}(G^{m_0},G^{m_0})$.
It is easy to check that the space $_{m_\infty}\textbf{D}_{m_0}$ is obtained by gluing $_{m_{\infty}} \textbf{D}_{1}$ and $_{1} \textbf{D}_{m_0}$.

\section{Twisted group bundles and monodromy}\label{group bundle and monodromy}

\subsection{Setting}

Let $X$ be a oriented compact 2-manifold with boundaries.
Let $\Gamma$ be a finite group.
We concider $\Gamma$-(left) action on $G$ and free $\Gamma$-(right) action on $X$.
We define 

\begin{itemize}
  \item $ Y \coloneqq X/\Gamma,$
  \item trivial group bundle $G_X \coloneqq X \times G$ on $X,$
  \item a group bundle $G_Y \coloneqq (X \times G)/\Gamma$ on $Y$.
\end{itemize}

Let $\beta_Y $ be a finite subset of $Y,$ and $\beta_X$ be a pre-image of $\beta_Y$ for the quotient map $q \colon X \to Y.$
We denote the fundamental groupoid by $\Pi_X \coloneqq \Pi_1(X,\beta_X)$ and $\Pi_Y \coloneqq \Pi_1(Y,\beta_Y).$

\subsection{The monodromy of $G_Y$}
The trivial group bundle $G_X$ has the trivial flat Ehresmann connection $\mathcal{H}_X.$ We denote its monodromy by $\mon_X$.
Since $G_X$ is trivial as a group bundle, under the natural identification $G_b \simeq G$ for all $b \in \beta_X$, we obtain trivial monodromy representation

\[\mon_{ X, \beta} \colon \Pi_X \to \Aut(G), \gamma \mapsto 1.\]

Since $\mathcal{H}_X$ is $\Gamma$-invariant under the diagonal action of $\Gamma$ on $G_X$, $\mathcal{H}_X$ induces a flat Ehresmann connection $\mathcal{H}_Y$ on $Y$.
We denote its monodromy by $\mon_Y$. 
For any $\gamma \in \Pi_Y$, $\mon_Y(\gamma) \in \Isom(G_{s(\gamma)},G_{t(\gamma)}).$

Fix a complete system of representatives $I \subset \beta_X$ for the $\Gamma$-action, 
we will construct a $\Gamma$-valued representation of $\mon_Y.$
For any $\gamma \in \Pi_Y,$ $i(s(\gamma)) \in I$ denotes a representative of $q^{-1}(s(\gamma)),$ and $i(t(\gamma)) \in I$ denotes a representative of $q^{-1}(t(\gamma)).$
Let $\tilde{\gamma}$ be a lift of $\gamma \in \Pi_Y$ starting at $i(s(\gamma))$,
then there exist a unique element $\varphi_I(\gamma) \in \Gamma$ such that
\[ t(\tilde{\gamma}) = i(t(\gamma))\cdot \varphi_I(\gamma).\]
Set

\[ \mon_{Y,I}(\gamma) \coloneqq \varphi_I(\gamma),\]

we obtain the monodromy representation $\mon_{Y,I} \colon \Pi_Y \to \Gamma$.

\subsection{The $G$-valued representation of $\Pi_Y$}

We denote 
\[\Hom_{\mon_{Y,I}}(\Pi_Y,G) \coloneq \set{\underline{\rho} \in \Hom(\Pi_{Y}, \Gamma \ltimes G)}{ \pr_{\Gamma} \circ \underline{\rho} = \mon_{Y,I}}.\]

In this subsection, we construct a  bijection 
\[\Hom(\Pi_X,G)^{\Gamma} \to \Hom_{\mon_{Y,I}}(\Pi_Y,G),\  \rho \mapsto \underline{\rho}_I.\]

A $\Gamma$-action on $\Hom(\Pi_X,G)$ is defined by 
\[ (\varphi \cdot \rho)(\tilde{\gamma}) = \varphi \rho(\tilde{\gamma}\cdot\varphi) \quad  (\rho \in \Hom(\Pi_X,G),\ \varphi \in \Gamma,\ \tilde{\gamma} \in \Pi_X).\]

For any $\rho \in \Hom(\Pi_X,G)^{\Gamma}$ and $\gamma \in \Pi_Y,$ we denote the lifting of $\gamma$ starting at $i(s(\gamma))$ by $\tilde{\gamma}$.
We define

\[\underline{\rho}_{I}(\gamma) \coloneqq (\mon_{Y,I}(\gamma),\rho(\tilde{\gamma})).\]

\begin{prop}
  $\underline{\rho}_I \colon \Pi_Y \to \Gamma \ltimes G$ is a homomorphism of groupoid.
\end{prop}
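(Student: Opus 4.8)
The plan is to verify directly the functoriality identity $\underline{\rho}_I(\gamma_1\gamma_2) = \underline{\rho}_I(\gamma_1)\,\underline{\rho}_I(\gamma_2)$ for an arbitrary composable pair $\gamma_1,\gamma_2 \in \Pi_Y$; write $\gamma_2\colon y_0 \to y_1$ and $\gamma_1\colon y_1\to y_2$, so that $\gamma_1\gamma_2\colon y_0\to y_2$. Once this is done, compatibility with identities and with inverses follows formally, and $\underline{\rho}_I$ is already well defined since homotopic paths in $Y$ lift to paths in $X$ that are homotopic rel endpoints.

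The geometric input is the $\Gamma$-equivariance of path lifting along the principal $\Gamma$-covering $q\colon X\to Y$. Let $\widetilde{\gamma_k}$ denote the lift of $\gamma_k$ starting at $i(s(\gamma_k))$, and set $\varphi_k \coloneqq \mon_{Y,I}(\gamma_k)$, so that $t(\widetilde{\gamma_2}) = i(y_1)\cdot\varphi_2$ and $t(\widetilde{\gamma_1}) = i(y_2)\cdot\varphi_1$. Since $q$ is $\Gamma$-invariant, $\widetilde{\gamma_1}\cdot\varphi_2$ is again a lift of $\gamma_1$, now with source $i(y_1)\cdot\varphi_2 = t(\widetilde{\gamma_2})$; by uniqueness of path lifting, the lift of $\gamma_1\gamma_2$ starting at $i(y_0)$ is therefore exactly $(\widetilde{\gamma_1}\cdot\varphi_2)\,\widetilde{\gamma_2}$, with target $i(y_2)\cdot\varphi_1\varphi_2$. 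Taking the $\Gamma$-component first shows that $\mon_{Y,I}$ is itself a groupoid homomorphism, $\mon_{Y,I}(\gamma_1\gamma_2) = \varphi_1\varphi_2$. For the $G$-component, apply $\rho$: it is a homomorphism, and unwinding the hypothesis $\varphi\cdot\rho = \rho$, i.e.\ $\varphi\cdot\rho(\tilde\gamma\cdot\varphi) = \rho(\tilde\gamma)$, gives $\rho(\tilde\gamma\cdot\varphi) = \varphi^{-1}\cdot\rho(\tilde\gamma)$, so that
\[
\rho\bigl((\widetilde{\gamma_1}\cdot\varphi_2)\,\widetilde{\gamma_2}\bigr) = \rho(\widetilde{\gamma_1}\cdot\varphi_2)\,\rho(\widetilde{\gamma_2}) = \bigl(\varphi_2^{-1}\cdot\rho(\widetilde{\gamma_1})\bigr)\,\rho(\widetilde{\gamma_2}).
\]
Hence $\underline{\rho}_I(\gamma_1\gamma_2) = \bigl(\varphi_1\varphi_2,\ (\varphi_2^{-1}\cdot\rho(\widetilde{\gamma_1}))\,\rho(\widetilde{\gamma_2})\bigr)$, which one identifies with the product $(\varphi_1,\rho(\widetilde{\gamma_1}))\cdot(\varphi_2,\rho(\widetilde{\gamma_2})) = \underline{\rho}_I(\gamma_1)\,\underline{\rho}_I(\gamma_2)$ in $\Gamma\ltimes G$ after spelling out the semidirect-product multiplication.

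I expect the only real difficulty to be bookkeeping: keeping track consistently of the left $\Gamma$-action on $G$, the right $\Gamma$-action on $X$ and on $\Pi_X$, the composition convention in both fundamental groupoids, and the induced multiplication on $\Gamma\ltimes G$, so that the factors $\varphi_2^{-1}$ produced by $\Gamma$-invariance of $\rho$ match the semidirect-product law; the topology involved—equivariant unique lifting of paths and homotopies along $q$—is standard because $\Gamma$ is finite and acts freely on $X$.
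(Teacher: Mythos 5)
Your argument is correct and coincides with the paper's own proof: both rest on the lifting identity $\widetilde{\gamma_1\gamma_2}=(\widetilde{\gamma_1}\cdot\mon_{Y,I}(\gamma_2))\,\widetilde{\gamma_2}$, then apply $\rho$ and use the $\Gamma$-fixedness relation $\rho(\tilde\gamma\cdot\varphi)=\varphi^{-1}\cdot\rho(\tilde\gamma)$ to match the semidirect-product multiplication. Your version merely makes explicit the equivariance and uniqueness of path lifting that the paper leaves implicit.
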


\begin{proof}
  For any $\gamma_1,\gamma_2 \in \Pi_Y$ satisfy $s(\gamma_1)=t(\gamma_2)$
  \[\widetilde{(\gamma_1 \gamma_2)} = (\tilde{\gamma_1}\cdot \mon_{Y,I}(\gamma_2))\tilde{\gamma_2}.\]

  Therefore
  \begin{align*}
    \underline{\rho}_I(\gamma_1\gamma_2) &= (\mon_{Y,I}(\gamma_1 \gamma_2), \rho(\widetilde{\gamma_1 \gamma_2})) \\
    &= (\mon_{Y,I}(\gamma_1) \mon_{Y,I}(\gamma_2), \rho(\tilde{\gamma_1}\cdot\mon_{Y,I}(\gamma_2))\rho(\tilde{\gamma_2})) \\
    &= (\mon_{Y,I}(\gamma_1) \mon_{Y,I}(\gamma_2), (\rho(\tilde{\gamma_1}) \cdot \mon_{Y,I}(\gamma_2)) \rho(\tilde{\gamma_2})) \\
    &= \underline{\rho}_{I}(\gamma_1) \underline{\rho}_{I}(\gamma_2).
  \end{align*}

\end{proof}

It is easy to see that the above map 
is bijective.

\section{A $C^{\infty}$ structure of fixed point sets}

In this section, we show each connected component of the fixed point set $N^{\Gamma}$ for a $C^{\infty}$-action of a finite group $\Gamma$ on a Banach manifold $N$ is a Banach submanifold of $N$.

Fix a point $x \in N^{\Gamma}$, a $\Gamma$-action on the tangent space $T_xN$ is induced from the $\Gamma$-action on $N$.
We denote the averaging by

\[\ave_x \colon T_xN \to (T_xN)^{\Gamma},\ X \mapsto \frac{1}{|\Gamma|} \sum_{\varphi \in \Gamma} \varphi \cdot X.\]

Because the $\Gamma$-action on $T_xN$ is top-linear, the averaging is also top-linear.

\begin{lemma}
  $\ker (\ave_x)$ and $(T_xN)^{\Gamma}$ are closed subspaces of $T_xN$.
  Furthemore a sequence
  \[0 \to \ker(\ave_x) \hookrightarrow T_xN \twoheadrightarrow (T_xN)^{\Gamma} \to 0\]
  is split.
\end{lemma}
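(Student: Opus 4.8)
The plan is to recognize $\ave_x$ as a continuous linear idempotent on the Banach space $T_xN$ whose image is $(T_xN)^{\Gamma}$, and then to invoke the standard fact that a continuous idempotent on a Banach space produces a topological direct sum decomposition into closed complementary subspaces; the splitting of the sequence is then immediate.

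First I would check that $\ave_x$ is a bounded linear operator. Since the $\Gamma$-action on $T_xN$ is top-linear, for each $\varphi \in \Gamma$ the map $X \mapsto \varphi \cdot X$ is a topological linear automorphism of $T_xN$, hence bounded; as $\ave_x = \frac{1}{|\Gamma|}\sum_{\varphi \in \Gamma}(\varphi \cdot -)$ is a finite linear combination of bounded operators, it is bounded, and in particular $\ker(\ave_x)$ is a closed subspace of $T_xN$.

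Next I would identify the image of $\ave_x$ and verify idempotency. For any $X \in T_xN$ and $\psi \in \Gamma$, reindexing the sum gives $\psi \cdot \ave_x(X) = \frac{1}{|\Gamma|}\sum_{\varphi \in \Gamma}(\psi\varphi)\cdot X = \ave_x(X)$, so $\ave_x(T_xN) \subset (T_xN)^{\Gamma}$. Conversely, if $X \in (T_xN)^{\Gamma}$ then $\varphi \cdot X = X$ for every $\varphi$, so $\ave_x(X) = X$. Hence $\ave_x(T_xN) = (T_xN)^{\Gamma}$, the restriction of $\ave_x$ to $(T_xN)^{\Gamma}$ is the identity, and $\ave_x \circ \ave_x = \ave_x$. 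Moreover $(T_xN)^{\Gamma} = \ker(\mathrm{id} - \ave_x)$ by the computation just made, so $(T_xN)^{\Gamma}$ is closed as the kernel of a continuous operator.

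Finally I would conclude. Because $\ave_x$ is a continuous idempotent, every $X \in T_xN$ decomposes uniquely as $X = \bigl(X - \ave_x(X)\bigr) + \ave_x(X)$ with $X - \ave_x(X) \in \ker(\ave_x)$ and $\ave_x(X) \in (T_xN)^{\Gamma}$, and both summands depend continuously on $X$; therefore $T_xN = \ker(\ave_x) \oplus (T_xN)^{\Gamma}$ as topological vector spaces. This topological direct sum is precisely a splitting of $0 \to \ker(\ave_x) \hookrightarrow T_xN \twoheadrightarrow (T_xN)^{\Gamma} \to 0$, with section the inclusion $(T_xN)^{\Gamma} \hookrightarrow T_xN$ and retraction $\ave_x$. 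There is no serious obstacle here; the only point needing care is that top-linearity of the action is what makes each $\varphi \cdot -$, and hence $\ave_x$, continuous, so that the algebraic splitting is automatically a topological one and the two subspaces in question are closed.
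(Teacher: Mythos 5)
Your proposal is correct and follows essentially the same route as the paper: both arguments rest on recognizing $\ave_x$ as a continuous idempotent with image $(T_xN)^{\Gamma}$ and deducing the topological splitting $T_xN = \ker(\ave_x)\oplus (T_xN)^{\Gamma}$. The only cosmetic difference is that the paper obtains closedness of $(T_xN)^{\Gamma}$ as the intersection $\bigcap_{\varphi}\ker(\varphi-1)$ while you use $\ker(\mathrm{id}-\ave_x)$; your write-up is in fact slightly more complete in spelling out that the image of $\ave_x$ equals $(T_xN)^{\Gamma}$.
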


\begin{proof}
  Since $(T_xN)^{\Gamma}$ is a intersection of closed subspaces

  \[(T_xN)^{\Gamma} = \bigcap_{\varphi \in \Gamma} \ker(\varphi - 1) ,\]

  the space $(T_xN)^{\Gamma}$ is closed. 
  Because $\ave_x$ is top-linear, the kernel $\ker(\ave_x)$ is closed subspace.
  For any $X \in T_xN$

  \begin{align*}
    (\ave_x)^2(X) &= \ave_x\left(\frac{1}{|\Gamma|}\sum_{\varphi \in \Gamma}\varphi\cdot X \right)\\
    &= \frac{1}{|\Gamma|}\sum_{\varphi \in \Gamma} \ave_x(\varphi \cdot X)\\
    &= \frac{1}{|\Gamma|} \sum_{\varphi \in \Gamma} \varphi \cdot X\\
    &= \ave_x(X)
  \end{align*}

  Therefore, we have
  \[T_xN = \ker(\ave_x) \oplus (T_xN)^{\Gamma}.\]

\end{proof}

Suppose $N^{\Gamma}$ is connected and fixing for any $x \in N^{\Gamma}$, we will show that there exists a good chart around $x \in N$.
By  Bochner's linearization theorem(\cite{Liegroups}), there exists a $\Gamma$-invariant open neighborhood of $x \in N^{\Gamma}$ 
and there exists a $\Gamma$-equivariant diffeomorphism $\chi \colon U \stackrel{\sim}{\rightarrow} V \subset T_xN $ which satisfies $d_x\chi =1_{T_xN}.$ 
Since $T_xN = \ker(\ave_x) \oplus (T_xN)^{\Gamma}$, the above neighborhood $U$ can be written as 
\[ U= \chi^{-1}(V^{\prime}\times V^{\Gamma})\] 
where $V^{\prime}$(resp. $V^{\Gamma}$) is a sufficiently small open neighborhood of $0$ in $\ker(\ave_x)$(resp. $(T_xN)^{\Gamma}$). 

Because $\chi$ is $\Gamma$-equivariant, we have
\[N^{\Gamma} \cap U \simeq V^{\Gamma} = V \cap (T_xN)^{\Gamma}.\]

Therefore each connected component of $N^{\Gamma}$ is a Banach submanifold of $N.$

\section*{Acknowledgments}
The author would like to thank Professor Daisuke Yamakawa for valuable advice.

\bibliographystyle{abbrv}

\end{document}